\documentclass[12pt]{amsart}

\usepackage{amsmath}
\usepackage{amssymb}
\usepackage{amsfonts}
\usepackage{amsthm}
\usepackage{enumerate}
\usepackage{color}
\usepackage{psfrag}
\usepackage{mathrsfs}
\usepackage{amscd}
\usepackage[hyperindex,backref=page]{hyperref}
\usepackage{graphicx,color}
\usepackage{tikz} 

\newtheorem{thm}{Theorem}[section]
\newtheorem{cor}[thm]{Corollary}
\newtheorem{lemma}[thm]{Lemma}
\newtheorem{prop}[thm]{Proposition}

\newtheorem{q}[thm]{Question}
\newtheorem{conj}[thm]{Conjecture}

\theoremstyle{definition}
\newtheorem{defi}[thm]{Definition}

\theoremstyle{remark}

\DeclareMathOperator{\link}{link}
\DeclareMathOperator{\diam}{diam}

\DeclareMathOperator{\depth}{depth}
\DeclareMathOperator{\pd}{pd}
\DeclareMathOperator{\height}{ht}
\DeclareMathOperator{\Ass}{Ass}
\DeclareMathOperator{\supp}{supp}
\DeclareMathOperator{\reg}{reg}
\DeclareMathOperator{\im}{im}
\DeclareMathOperator{\Min}{Min}

\DeclareMathOperator{\lcm}{lcm}
\DeclareMathOperator{\Ann}{Ann}
\DeclareMathOperator{\ogirth}{odd-girth}

\DeclareMathOperator{\assrad}{assrad}

\begin{document}


\title[Comparisons between ordinary and symbolic powers of edge ideals]{Comparisons between ordinary and symbolic powers of edge ideals with respect to regularity and depth}

\author{Yuji Muta}

\address[Y. Muta]{Department of Mathematics, Okayama University, 3-1-1 Tsushima-naka, Kita-ku, Okayama 700-8530 Japan.}

\email{p8w80ole@s.okayama-u.ac.jp}

\begin{abstract}
In this paper, we investigate the difference between ordinary and symbolic powers of edge ideals on the Castelnuovo--Mumford regularity and the depth. As a main theorem, we prove that the regularities of ordinary and symbolic powers coincide for edge ideals of simplicial graphs, as a partial result of Minh's conjecture. For the depth, we first show that, for each $k\geq2$, the inequality $\depth S/I^{(k)}\geq\depth S/I^{k}$ does not hold for squarefree monomial ideals in general. On the other hand, we prove that it holds for edge ideals when $k=2,3$. We also study the symbolic-ordinary discrepancy module $I(G)^{(k)}/I(G)^{k}$ of the edge ideal $I(G)$ of a graph $G$ and give a graph-theoretic formula for its Krull dimension in terms of induced odd cycles of $G$, thereby answering a question and a problem posed by Ha and Minh. 
\end{abstract}


\subjclass[2020]{Primary: 13F55, 13H10, 05C75; Secondary: 13D45, 05C90, 55U10}


\keywords{Minh's conjecture, symbolic power, ordinary power, edge ideal, depth, regularity, local cohomology}


\thanks{}
	

\maketitle


\section{Introduction}

The study of ordinary and symbolic powers of ideals is a classical topic in commutative algebra. Although these two kinds of powers are closely related, their algebraic properties can be quite different. Thus, it is natural to compare homological invariants of ordinary and symbolic powers. In this paper, we focus on two basic invariants, the Castelnuovo--Mumford regularity (or regularity for short) and the depth, for edge ideals.

Throughout, we always assume that all graphs are finite graphs without loops and multiple edges. Let $G$ be a finite simple graph on the vertex set $[n]=\{1,\ldots, n\}$ with $E(G)\neq\emptyset$. Let $S=K[x_{1},\ldots, x_{n}]$ be the polynomial ring  in $n$ variables over a field $K$. The {\it edge ideal} of $G$ is the squarefree monomial ideal of $S$, denoted by $I(G)$, generated by quadratic squarefree monomials corresponding to the edges of $G$, that is, 
$$I(G)=(x_{i}x_{j}\,\,: \{i,j\}\in E(G)).$$
Since $I(G)$ is a squarefree monomial ideal, its $k$-th symbolic power is given by 
$$I(G)^{(k)}=\bigcap_{P\in\Min(I(G))}P^{k},$$
where $\Min(I(G))$ denotes the set of minimal primes of $I(G)$. Notice that we always have $I(G)^{k}\subset I(G)^{(k)}$ for all $k\geq1$. 

In Section \ref{reg}, we discuss the regularity. Minh proposed the following conjecture in \cite{mnptv2022}. 

\begin{conj}[\cite{mnptv2022}, Conjecture A]
Let $G$ be a graph. Then $$\reg I(G)^{(k)}=\reg I(G)^{k}\mbox{ for all }k\geq1.$$
\end{conj}

Minh's conjecture has been verified for several classes of graphs. It is known that $I(G)^{(k)}=I(G)^{k}$ for all $k\geq1$ if and only if $G$ is bipartite \cite[Theorem 5.9]{svv1994}. Hence, Minh's conjecture is true if $G$ is bipartite. The first non-bipartite case was proved by Gu, H\'{a}, O'Rourke, and Skelton for odd cycles \cite[Theorem 5.3]{ghrs2020}. After that, this conjecture was proved for several classes of graphs, including certain clique sums of odd cycles and bipartite graphs \cite{jk2020}, unicyclic graphs \cite[Theorem 3.9]{sf2020unicyclic}, Cameron--Walker graphs \cite{sf2020cw}, chordal graphs \cite{sf2022chordal}, and co--chordal graphs \cite{an2026}. Moreover, Minh, Nam, Phong, Thuy, and Vu proved that this conjecture holds for every graph when $k=2,3$ \cite{mnptv2022}. Nevertheless, this conjecture remains open for arbitrary graphs and all $k\geq1$. 

As the first main theorem of this paper, we prove that Minh's conjecture holds for simplicial graphs. Recall that a graph $G$ is called {\it simplicial} if every vertex of $G$ belongs to the closed neighborhood of some simplicial vertex, where a vertex is {\it simplicial} if its closed neighborhood is a clique. In fact, we obtain an explicit formula for the regularity of both ordinary and symbolic powers in terms of the induced matching number $\im(G)$ of $G$. 
\begin{thm}[see, Theorem \ref{main1}]
Let $G$ be a simplicial graph with $E(G)\neq\emptyset$. Then we have 
$$\reg I(G)^{(k)}=\reg I(G)^{k}=2k+\im(G)-1\mbox{ for all }k\geq1.$$
\end{thm}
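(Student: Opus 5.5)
The plan is to squeeze both regularities between the same lower and upper bounds.

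\emph{Lower bound.} For any graph $G$ and any $k\ge1$ one has $\reg I(G)^{k}\ge 2k+\im(G)-1$ (Beyarslan--H\`a--Trung). The same bound holds for symbolic powers, $\reg I(G)^{(k)}\ge 2k+\im(G)-1$. I would obtain the symbolic version by the standard restriction argument. Take an induced matching $M$ of size $\im(G)$ and let $H$ be the induced subgraph on $V(M)$. Restricting to $H$ commutes with symbolic powers of squarefree monomial ideals, that is, $I(G)^{(k)}|_{H}=I(H)^{(k)}$. Restriction also does not increase regularity. Since $H$ is a disjoint union of edges, it is bipartite, so $I(H)^{(k)}=I(H)^{k}$, and the regularity of powers of a complete intersection of $m=\im(G)$ quadrics is $2k+m-1$. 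So only the upper bounds remain:
$$\reg I(G)^{k}\le 2k+\im(G)-1 \quad\text{and}\quad \reg I(G)^{(k)}\le 2k+\im(G)-1 .$$

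\emph{Upper bound for ordinary powers.} I would proceed by induction on the number of vertices (and on $k$), using a simplicial vertex $x$ with closed neighborhood the clique $N[x]=\{x,y_1,\dots,y_r\}$. The plan is to use Banerjee's colon technique, or the short exact sequences
$$0\to S/(I^k:x)(-1)\to S/I^k\to S/(I^k,x)\to 0 .$$
Here $(I^k,x)=(I(G\setminus x)^k,x)$, so induction applies to $G\setminus x$, using $\im(G\setminus x)\le\im(G)$. For the colon part, I would iterate over the neighbors $y_i$. Since $N[x]$ is a clique, $I^k:(xy_i)=I^{k-1}$ when $k\ge2$ (even-connection colons add nothing beyond clique edges already present). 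This reduces the problem to $I^{k-1}$, giving the bound $2(k-1)+\im(G)-1+2$. The pieces of the form $(I^k:x)$ plus variables become powers of edge ideals of $G\setminus N[x]$ plus clique data, and these are simplicial graphs with $\im(G\setminus N[x])\le\im(G)-1$. That loss of one in the induced matching number pays for the degree shift.

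\emph{Upper bound for symbolic powers; this is the main obstacle.} Here one cannot use $I^{(k)}:xy=I^{(k-1)}$ naively. My approach is to prove the analogous colon and restriction identities for symbolic powers at a simplicial vertex. First, $(I^{(k)},x)=(I(G\setminus x)^{(k)},x)$, which holds for squarefree monomial ideals. Second, the key identity $I^{(k)}:y_i x=I^{(k-1)}$. I would verify this prime by prime: every minimal vertex cover contains at least one of $x,y_i$, and the covers containing both are exactly those omitting nothing of $N[x]$ except possibly one vertex. I would also use that $I^{(k)}:x$ decomposes along the covers containing $x$ versus those that do not. Then I would run the same exact sequences as in the ordinary case, with the same induction on $|V(G)|$ and $k$, to get $\reg I(G)^{(k)}\le 2k+\im(G)-1$.

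If the direct colon identity fails, the fallback is to use $\reg I^{(k)}\le\max$ over a filtration by $x$-degree. The point is that every generator of $I^{(k)}$ divisible by a high power of $x$ lies in $x^a\cdot I(G\setminus x)^{(k-a)}$-type pieces, because the clique structure forces covers to contain all of $N(x)$ when $x$ is omitted. Combining the lower and upper bounds gives equality for both powers.
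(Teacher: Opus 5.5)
Your lower bound is fine. Restricting to the induced subgraph on a maximum induced matching commutes with symbolic powers and does not increase regularity. It reduces to powers of a complete intersection of $\im(G)$ quadrics.

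\textbf{The gap is in both upper bounds.} Each rests on an identity for a simplicial vertex $x$ and a neighbor $y_i$:
$$I^k:(xy_i)=I^{k-1}\quad\text{and}\quad I^{(k)}:(xy_i)=I^{(k-1)}.$$
These hold when $x$ is a leaf, but they fail as soon as $|N[x]|\ge 3$. For a counterexample, take the triangle on $\{x,y,z\}$ with a pendant vertex $v$ attached to $y$. This graph is simplicial, and $x$ is a simplicial vertex.
\begin{itemize}
\item Ordinary powers: $xy\cdot zv=(xz)(yv)\in I^2$, so $zv\in (I^2:xy)\setminus I$. This is exactly an even-connection $z,x,y,v$, contrary to your parenthetical remark.
\item Symbolic powers: the minimal vertex covers are $\{y,z\},\{x,y\},\{x,z,v\}$. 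Hence $xyz\in I^{(2)}$, and so $z\in (I^{(2)}:xy)\setminus I$.
\end{itemize}
Your own cover analysis shows the reason. A minimal cover omits exactly one vertex of the clique $N[x]$. So when $r\ge 2$ there are minimal covers containing both $x$ and $y_i$, and on those primes the exponent drops by two, not one.

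\textbf{The induction also does not close.} The quotient $(I^k,x)$ leads to $G\setminus x$, which need not be simplicial. For example, delete one whisker leaf from $W(C_4)$. Its neighbor on the $4$-cycle then has two non-adjacent neighbors and lies in no closed neighborhood of a simplicial vertex. You cannot repair this by enlarging the class, because the target bound is not hereditary. The cycle $C_5$ is an induced subgraph of the simplicial graph $W(C_5)$, yet $\reg I(C_5)=3>\im(C_5)+1$. Two further steps are unsupported:
\begin{itemize}
\item Your description of the pieces coming from $I^k:x$ is not substantiated. For $k\ge 2$ this colon is not generated by variables and edges of $G\setminus N[x]$.
\item The fallback filtration by $x$-degree is not specified enough to check.
\end{itemize}

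\textbf{What is missing} is a way to pass from leaves to cliques of size at least three without a colon identity. The paper does this in four steps.
\begin{enumerate}
\item It proves the formula for whisker graphs. There the leaf identity $I(W(G))^{(k)}:x_jy_j=I(W(G))^{(k-1)}$ does hold. Together with Lemmas~\ref{lemma1}--\ref{lemma3} and $\delta(I(G))=2$, it controls $I(W(G))^{(k)}+Q_m$.
\item It transfers the formula to multi-whisker graphs by expansion.
\item It compares an arbitrary simplicial graph $G$ with a multi-whisker graph $G'$ via Lemma~\ref{key}. The vertex map is injective on each minimal cover of $G'$ and maps these covers onto the covers of $G$. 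This induces a surjective join-preserving map of lcm-lattices (with $\delta^{-1}(1)=\{1\}$), which gives $\reg I(G)^{(k)}\le \reg I(G')^{(k)}$.
\item The ordinary-power upper bound then follows from $\reg I(G)=\im(G)+1$ (the case $k=1$) and the cited bound $\reg I(G)^k\le 2k+\reg I(G)-2$.
\end{enumerate}
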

Simplicial graphs include whisker graphs \cite{v1990}, multi-whisker graphs \cite{mpt2025}, clique-whiskered graphs \cite{cn2012}, multi-clique-whiskered graphs \cite{mt2025}, clique-corona graphs \cite{hp2022}, multi-clique-corona graphs \cite{mt2026}, Cohen--Macaulay chordal graphs \cite{hhz2006}, and Cohen--Macaulay Cameron--Walker graphs \cite{hhko2015}.

We will explain the outline of the proof. We first prove that the regularity of ordinary and symbolic powers of edge ideals of a whisker graph $W(G)$ of an arbitrary graph $G$ coincides with $2k+\im(W(G))-1$ for all $k\geq1$ (see, Theorem \ref{whisker}). Using a technique which was introduced by Bayati and Herzog \cite{bh2014}, we then extend this formula to multi-whisker graphs (see, Corollary \ref{multi-whisker}). The key step is a comparison of the regularity of symbolic powers of two squarefree monomial ideals. More precisely, we show that a suitable map between the minimal primes of two squarefree monomial ideals gives an inequality between the regularities of their all symbolic powers. The proof is based on polarization, Alexander duality, and lcm-lattices (see, Lemma \ref{key}). Finally, we relate every simplicial graph to a suitable multi-whisker graph and apply the above comparison result to obtain the desired formula. 

We end Section \ref{reg} with another comparison result which holds for an arbitrary graph. Let $\ogirth(G)$ denote the length of a shortest induced odd cycle of $G$. Rinaldo, Terai, and Yoshida \cite{rty2011} showed that $I(G)^{(k)}=I(G)^{k}$ if and only if $\ogirth(G)>2k-1$. Hence, if $\ogirth(G)=2t-1$, then $t$ is the first power for which the ordinary and symbolic powers differ. We show that, even at this first nontrivial power, their local cohomology modules are closely related as follows. 
\begin{thm}[see, Theorem \ref{local coho} and Corollary \ref{ineq of reg}]
Let $G$ be a graph with $\ogirth(G)=2t-1$. If $1\leq k\leq t$, then, for every $i$, the natural map $H_{\mathfrak{m}}^{i}(S/I(G)^{k})\rightarrow H_{\mathfrak{m}}^{i}(S/I(G)^{(k)})$ induced from a short exact sequence $0\rightarrow I(G)^{(k)}/I(G)^{k}\rightarrow S/I(G)^{k}\rightarrow S/I(G)^{(k)}\rightarrow0$ is surjective. 

In particular, we have $$\reg I(G)^{(k)}\leq\reg I(G)^{k}\mbox{ for all }1\leq k\leq t.$$
\end{thm}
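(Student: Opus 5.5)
The plan is to reduce to the single case $k=t$, pass to the long exact sequence in local cohomology, and check the resulting injectivity degree by degree with Takayama's formula.

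\emph{Reduction to $k=t$.} By the result of Rinaldo, Terai and Yoshida quoted above, $I(G)^{(k)}=I(G)^{k}$ for $k<t$, so the map in question is the identity there. Hence we may assume $k=t$.

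\emph{Description of the discrepancy module.} Put $I=I(G)$ and $M=I^{(t)}/I^{t}$. I would first describe $M$ explicitly. Since the shortest induced odd cycles have length $2t-1$, I expect that
$$I^{(t)}=I^{t}+(x_C : C \text{ an induced $(2t-1)$-cycle of } G),$$
where $x_C=\prod_{j\in V(C)}x_j$. This should follow by examining a monomial of $I^{(t)}\setminus I^t$ via the vertex-cover description of symbolic powers: some induced odd cycle must be responsible, and by the girth hypothesis only the minimal ones, of length $2t-1$, can contribute. Next, $x_jx_C\in I^t$ for every $j\in N[C]$. Indeed, for $j\in C$, $x_j^2$ times the remaining $2t-2$ vertices of the cycle splits into $t$ edges. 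For $j\in N(C)\setminus C$, pair $j$ with a neighbour on $C$ and match the remaining even path. I therefore expect
$$M\cong\bigoplus_C x_C\,T_C,\qquad T_C=K[x_j : j\notin N[C]].$$
To prove this one must check that no monomial $x_Cu$ with $u\in T_C$ lies in $I^t$, for instance by a degree/matching count on $\supp(x_Cu)$, and that distinct cycles give disjoint monomial sets. Consequently $H^i_{\mathfrak m}(M)$ is nonzero only for $i=n-|N[C]|$. It is concentrated in multidegrees $a$ with $a_j=1$ on $C$, $a_j=0$ on $N(C)\setminus C$, and $a_j<0$ for all $j\notin N[C]$.

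\emph{Reformulation via the long exact sequence.} From $0\to M\to S/I^t\to S/I^{(t)}\to0$, surjectivity of $H^i_{\mathfrak m}(S/I^t)\to H^i_{\mathfrak m}(S/I^{(t)})$ for all $i$ is equivalent to injectivity of $H^{i}_{\mathfrak m}(M)\to H^{i}_{\mathfrak m}(S/I^t)$ for all $i$. It suffices to check injectivity in each multidegree $a$ of the type just described.

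\emph{Degree-wise check (main obstacle).} Here I would use Takayama's formula. Then $H^i_{\mathfrak m}(S/I^t)_a\cong\widetilde H^{i-|G_a|-1}(\Delta_a(I^t))$, with $G_a=\{j: a_j<0\}=[n]\setminus N[C]$. The complex $\Delta_a(I^t)$ consists of faces $F\subseteq N[C]$ with $x^{a^+}\notin I^tS_{F\cup G_a}$. Since $x^{a^+}=x_C$, inverting any variable of $G_a$ never forces membership of $x_C$ in $I^t$, because the edges of $G[C]$ alone allow at most $t-1$ disjoint products. Hence $\Delta_a(I^t)$ should be the full simplex on the empty-localization-relevant part, or more precisely $F=\emptyset$ is a face while every nonempty $F\subseteq N[C]$ kills $x_C$ modulo $I^t$. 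So $\Delta_a(I^t)=\{\emptyset\}$, whose reduced cohomology is $K$ in degree $-1$, matching $i=|G_a|=n-|N[C]|$. The required check is that the connecting map sends the generator of $H^i_{\mathfrak m}(M)_a$, which is one-dimensional per cycle, to this class nontrivially. I would verify it through the Čech complex, where both classes are represented by $x_C/\prod_{j\in G_a}x_j^{-a_j}$. This compatibility is the step I expect to be most delicate. Once it is established, the long exact sequence gives surjectivity.

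\emph{Regularity inequality.} The consequence $\reg I^{(k)}\le\reg I^k$ is immediate. Surjectivity gives $\mathrm{end}\,H^i_{\mathfrak m}(S/I^{(k)})\le \mathrm{end}\,H^i_{\mathfrak m}(S/I^k)$ for every $i$, hence $\reg S/I^{(k)}\le\reg S/I^k$.
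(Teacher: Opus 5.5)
\textbf{Gap: the description of $M=I^{(t)}/I^{t}$ is wrong.} The claim $M\cong\bigoplus_C x_C T_C$ with $T_C=K[x_j : j\notin N[C]]$ fails as soon as $G-N[C]$ has an edge $\{p,q\}$. A matching of $t-1$ edges of $C$ together with $x_px_q$ shows $x_Cx_px_q\in I^t$, so your check ``no monomial $x_Cu$ with $u\in T_C$ lies in $I^t$'' does not hold. No edge joins $C$ to $V\setminus N[C]$, and the edges inside $C$ dividing $x_C$ form a matching of size at most $t-1$. Your own count therefore gives the correct statement: $x_Cu\in I^t$ if and only if $u\in I(G-N[C])$. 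Hence $M\cong\bigoplus_C K[\Delta(G-N[C])](-\mathbf{e}_C)$, where $\mathbf{e}_C$ is the indicator vector of $C$.

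As a result, your conclusions about $H^\bullet_{\mathfrak m}(M)$ are false in general. It is not concentrated in $i=n-|N[C]|$; it occurs in cohomological degrees between the depth and the dimension $\alpha(G-N[C])$ of $K[\Delta(G-N[C])]$. It also lives in multidegrees with $a_j\le 0$, not $a_j<0$, off $N[C]$. For example, take $G=C_3\sqcup K_2$ and $t=2$. Then $M\cong x_1x_2x_3\,K[x_4,x_5]/(x_4x_5)$ and $H^1_{\mathfrak m}(M)_{(1,1,1,0,0)}\neq 0$, whereas you predict cohomology only for $i=2$. So your degree-wise check only covers multidegrees with $\supp_{-}(a)=V\setminus N[C]$. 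Even there, your argument for $\Delta_a(I^t)=\{\emptyset\}$ (``inverting variables of $G_a$ never forces membership'') breaks when $G_a$ contains an edge, since then $IS_{G_a}$ is the unit ideal.

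\textbf{How to repair it along your lines.} The step you flag as most delicate is in fact automatic. With the corrected description, each summand is a $T_C$-module shifted by $\mathbf{e}_C$. By Hochster's formula, every multidegree $a$ with $H^\bullet_{\mathfrak m}(M)_a\neq 0$ then has $a_j=1$ on $C$, $a_j=0$ on $N(C)\setminus C$, and $a_j\le 0$ elsewhere. Thus $x^{a^+}=x_C\in I^{(t)}$, so $\Delta_a(I^{(t)})$ is the void complex and $H^\bullet_{\mathfrak m}(S/I^{(t)})_a=0$ in every cohomological degree. The long exact sequence then forces $H^i_{\mathfrak m}(M)_a\to H^i_{\mathfrak m}(S/I^t)_a$ to be injective, with no \v{C}ech-level compatibility to verify.

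\textbf{Comparison with the paper.} This repaired route is genuinely different from the paper's. The paper never computes $M$. It first proves $I^{(t)}=I^t+(\mathbf{x}_C)$; your sketch of this step is vague, and the paper makes it precise by restricting to $G|_{\supp u}$ for a minimal generator $u$ and applying the Rinaldo--Terai--Yoshida criterion. It then shows $\sqrt{I^{(t)}:\mathbf{x}^{\mathbf b}}=\sqrt{I^t:\mathbf{x}^{\mathbf b}}$ for every $\mathbf{x}^{\mathbf b}\notin I^{(t)}$, using $x_v\mathbf{x}_C\in I^t$ for $v\in C$. From this it deduces that in each multidegree either $\Delta_a(I^t)=\Delta_a(I^{(t)})$ or $\Delta_a(I^{(t)})$ is void, and concludes via the relative Hochster--Takayama formula.
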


In Section \ref{depth}, we turn to the comparison of the depth. It is natural to ask whether the depth of a symbolic power of a squarefree monomial ideal is always bounded below by that of the corresponding ordinary power.   
\begin{q}\label{ques of depth}
For a squarefree monomial ideal $I$, is it true that 
$$\depth S/I^{(k)}\geq\depth S/I^{k}\mbox{ for all }k\geq1 ?$$
\end{q}

Notice that equality cannot be expected in general. Indeed, the depth of ordinary powers may be zero, while every symbolic power of a squarefree monomial ideal without maximal ideal has positive depth. We first provide that Question \ref{ques of depth} has a counterexample in general. In fact, the inequality may fail even for squarefree monomial ideals of height two. 

\begin{thm}
For each $k\geq2$, there exists a squarefree monomial ideal $I$ with $\height I=2$ such that 
$$\depth S/I^{(k)}<\depth S/I^{k}.$$
\end{thm}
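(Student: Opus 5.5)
The plan is to produce, for each fixed $k\geq 2$, an explicit Stanley--Reisner ideal $I=I_{\Delta_k}$ of height two. We then compute both depths with Takayama's formula for local cohomology of monomial ideals, in the form $\depth S/J=\min\{|\supp a|+i+1 : \widetilde H_{i}(\Delta_{a}(J);K)\neq 0\}$, where $\Delta_a(J)$ are the degree complexes. The choice of $\Delta_k$ will be engineered so that some degree complex of $I^{(k)}$ has low-dimensional homology, while every degree complex of $I^{k}$ (in the relevant degrees) is acyclic in low dimensions.

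For symbolic powers we use the description of Minh and Trung. For $a\in\mathbb N^n$ with $G_a=\{i : a_i>0\}$, the complex $\Delta_a(I^{(k)})$ is the subcomplex of $\link$-type restrictions of $\Delta$ consisting of the faces $F$ such that $\sum_{i\notin F}a_i\leq k-1$ for some facet containing $F$. Here $F$ ranges over faces of $\Delta$ avoiding $G_a$, and we pass to the appropriate link/star. The complex $\Delta_k$ will be built around an odd-cycle-like piece: for instance, glue a triangle-type configuration, like the one giving $I=(x_1x_2,x_1x_3,x_2x_3)$, to a cone or join that keeps the height equal to two. Then:
\begin{itemize}
\item choosing $a$ with total weight $k-1$ spread over the two vertices opposite a missing face makes $\Delta_a(I^{(k)})$ disconnected;
\item this gives $\widetilde H_0\neq 0$ and hence an upper bound $\depth S/I^{(k)}\leq |\supp a|+1$, which is small.
\end{itemize}
This half is a direct combinatorial check.

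The opposite half is to bound $\depth S/I^{k}$ from below, and this is the main obstacle. Degree complexes of ordinary powers have no simple facet-counting description. We will therefore choose $\Delta_k$ so that $I^{k}$ is accessible. The natural candidate has $I=J+x_{n}L$ or has $I$ split as a sum of ideals in disjoint sets of variables, where one summand is a complete intersection. Then depth formulas for powers of sums, of the type $\depth S/(J_1+J_2)^{k}\geq\min_{i+j=k}\{\depth S_1/J_1^{i}+\depth S_2/J_2^{j}\}$ (Hoa--Tam--Trung style, from the binomial expansion and the Tor-splitting of $(J_1+J_2)^k$), reduce the bound to powers of each piece. Since the non-CI piece will itself have height two and few variables, its ordinary powers can be handled by direct computation. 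The presence of $\mathfrak m$-primary-type embedded primes in small rings is what creates the gap: in the triangle example, ordinary powers lose depth, which is the wrong direction.

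So the delicate point is to arrange the gluing so that the embedded components of $I^{k}$ are absorbed by the extra variables, keeping $\depth S/I^k$ large, while the disconnection in the symbolic degree complex survives. If this decomposition approach fails, the fallback is to compute $\depth S/I^{k}$ through Takayama directly. We would show that any $a$ with $\Delta_a(I^{k})$ non-acyclic in low dimension forces $|\supp a|$ to be large, using that $x^{a}\notin I^{k}$ but $x^{a}x_i\in I^{k}$ constrains $a$ heavily when $I$ has few generators. Finally, comparing the two bounds gives $\depth S/I^{(k)}<\depth S/I^{k}$, with $\height I=2$ by construction.
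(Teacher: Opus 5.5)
Your proposal is a strategy rather than a proof. No ideal $I$ is ever specified, and the step you yourself call the main obstacle, a lower bound for $\depth S/I^{k}$, is left as the hope that embedded components get ``absorbed by the extra variables''.

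The concrete mechanism you offer for that step also fails. If $I=J_1+J_2$ with $J_1,J_2$ in disjoint sets of variables, the minimal primes of $I$ are sums of minimal primes of $J_1$ and $J_2$, so $\height I=\height J_1+\height J_2$. A non-complete-intersection piece of height two plus a nonzero complete intersection therefore has height at least three, contradicting $\height I=2$. Joins of complexes are exactly such sums, and a cone only adjoins a free variable, raising both depths by one.

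Even ignoring height, the route is circular. When $J_2^{(j)}=J_2^{j}$, the expansions $(J_1+J_2)^{k}=\sum_{i+j=k}J_1^{i}J_2^{j}$ and $(J_1+J_2)^{(k)}=\sum_{i+j=k}J_1^{(i)}J_2^{j}$ feed into depth formulas of the same shape. Only $\depth A/J_1^{m}$ versus $\depth A/J_1^{(m)}$ ($m\le k$) differ between them, so this route can at best transport a counterexample already present in $J_1$; it cannot create one.

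Your symbolic-side target is also off. A disconnected $\Delta_{\bf a}(I^{(k)})$ with ${\bf a}\in\mathbb{N}^{n}$ gives $\depth S/I^{(k)}\le 1$; note that in Takayama's formula the shift is $|\supp_{-}{\bf a}|$, not $|\supp{\bf a}|$. You would then need $\depth S/I^{k}\ge 2$. Whenever $\dim S/I=2$, for instance height two in four variables, this forces $S/I^{k}$ to be Cohen--Macaulay, hence unmixed, hence $I^{k}=I^{(k)}$. So no strict inequality is possible in the small examples you have in mind.

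The missing ingredient is a specific family for which $\depth S/I^{k}$ can actually be computed. The paper takes $I=J(W(K_{k+1}))$, the cover ideal of the whisker graph of $K_{k+1}$, namely $I=(x_1\cdots x_{k+1})+(y_i\prod_{j\neq i}x_j : 1\le i\le k+1)$. Its minimal primes are the edge primes, so $\height I=2$. A known formula for symbolic powers of cover ideals of whisker graphs gives $\depth S/I^{(k)}=k$.

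For the ordinary power, the key observation is that $I=L(Q)$ for the Artinian ideal $Q=(z_1,\dots,z_{k+1})^{2}$. Here $\operatorname{Mon}(R\setminus Q)=\{1,z_1,\dots,z_{k+1}\}$ and every $b_i=2$. The depth formula for powers of $L(Q)$ then yields $\depth S/I^{k}=\sum_i b_i-\max\deg\lcm(u_1,\dots,u_k)-1=2(k+1)-k-1=k+1$.

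So the discrepancy is between two large depths, $k<k+1$. It comes from a cover ideal generated in degree $k+1$, not from an odd-cycle or edge-ideal configuration, which is consistent with the paper's conjecture that edge ideals never violate the inequality.
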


These ideals are obtained from cover ideals of whisker graphs of complete graphs.  This naturally leads us to ask for classes of squarefree monomial ideals for which the inequality in Question \ref{ques of depth} hold. In this paper, we will focus on edge ideals. Although Question \ref{ques of depth} has a negative answer in general, we do not know any counterexample among edge ideals. This leads us to propose the following conjecture. 

\begin{conj}\label{conj}
Let $G$ be a graph. Then we have 
$$\depth S/I(G)^{(k)}\geq\depth S/I(G)^{k}\mbox{ for all }k\geq1.$$
\end{conj}

As a first result toward this conjecture, we prove it for the second and third powers. 
\begin{thm}[see, Corollary \ref{small power}]
For any graph $G$, we have 
$$\depth S/I(G)^{(k)}\geq\depth S/I(G)^{k}\mbox{ for }k=2,3.$$   
\end{thm}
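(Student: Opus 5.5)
We would prove this by splitting into cases according to $\ogirth(G)$ and using the surjectivity result (Theorem \ref{local coho}) wherever it applies. The basic observation is the following. Suppose that for every $i$ the natural map $H_{\mathfrak m}^{i}(S/I(G)^{k})\to H_{\mathfrak m}^{i}(S/I(G)^{(k)})$ is surjective. Then $H_{\mathfrak m}^{i}(S/I(G)^{k})=0$ forces $H_{\mathfrak m}^{i}(S/I(G)^{(k)})=0$. By Grothendieck's characterization of depth as the smallest $i$ with nonvanishing local cohomology, this gives $\depth S/I(G)^{(k)}\geq \depth S/I(G)^{k}$. So it suffices to establish this surjectivity, or to have $I(G)^{(k)}=I(G)^{k}$.

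For $k=2$ there are two cases. If $G$ has no triangle, then $\ogirth(G)>3$, and by Rinaldo--Terai--Yoshida we have $I(G)^{(2)}=I(G)^{2}$, so there is nothing to prove. If $G$ has a triangle, then $\ogirth(G)=3=2\cdot 2-1$, so $t=2$ and $k=2\leq t$. Theorem \ref{local coho} then gives the surjectivity, and we conclude.

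For $k=3$ we argue in the same way. If $\ogirth(G)\geq 7$, then $I(G)^{(3)}=I(G)^{3}$. If $\ogirth(G)=5$, then $t=3\geq k$, and Theorem \ref{local coho} applies. If $\ogirth(G)=\infty$, the graph is bipartite and the powers coincide. The only remaining case is $\ogirth(G)=3$, where $t=2<3$. Here we cannot quote the theorem and must prove the surjectivity for $k=3$ directly.

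For this last case we would work with Takayama's formula. For $a\in\mathbb{Z}^n$, write $\Delta_a(J)$ for the degree complex of $J$. Then $H_{\mathfrak m}^{i}(S/J)_a$ is computed by the reduced cohomology of $\Delta_a(J)$. Since $I^3\subset I^{(3)}$, we have $\Delta_a(I^{(3)})\subseteq\Delta_a(I^{3})$. The map on local cohomology in degree $a$ corresponds to restriction along this inclusion, so surjectivity amounts to showing that
$$\widetilde H^{\,i-|G_a|-1}(\Delta_a(I^3))\to\widetilde H^{\,i-|G_a|-1}(\Delta_a(I^{(3)}))$$
is surjective for all $a$. Here $G_a$ is the set of negative coordinates of $a$.

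To control the faces we would use the explicit description of the third symbolic power from \cite{mnptv2022}: $I^{(3)}$ is generated by $I^3$, by products of an edge monomial with a triangle monomial $x_ix_jx_l$, and by monomials coming from induced $5$-cycles. The plan is to show, for each $a$, one of two things. Either the two degree complexes coincide, or $\Delta_a(I^{(3)})$ is a retract of $\Delta_a(I^{3})$ up to homotopy, for instance via a cone point given by a triangle vertex whose variable has large exponent in $a$. Either conclusion yields the surjectivity. The main obstacle is precisely this case analysis on $a$ when triangles and induced $5$-cycles interact. Our first attempt would be to reduce to the same combinatorial mechanism used in the proof of Theorem \ref{local coho}, with the triangle generators playing the role that the shortest odd cycles play there.
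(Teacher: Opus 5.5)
Your reduction is sound for $k=2$, and for $k=3$ when $\ogirth(G)\geq 5$ or $G$ is bipartite. In those cases Theorem \ref{local coho}, or the equality $I(G)^{(k)}=I(G)^{k}$, gives surjectivity in every cohomological degree, and that does imply the depth inequality. The gap is the case $k=3$ with $G$ containing a triangle. This is the only case with real content, and there you give a plan rather than a proof. The plan as phrased would also fail. The mechanism of Theorem \ref{local coho} is the equality $\sqrt{I^{3}:\mathbf{x}^{\mathbf{a}}}=\sqrt{I^{(3)}:\mathbf{x}^{\mathbf{a}}}$ for all $\mathbf{x}^{\mathbf{a}}\notin I^{(3)}$, fed into Lemma \ref{surj}, and this equality is false here. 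Take $G$ to be a triangle $\{p,q,r\}$ plus a disjoint edge $\{i,j\}$, and $\mathbf{x}^{\mathbf{a}}=x_jx_px_qx_r$. Then $x_i\mathbf{x}^{\mathbf{a}}\in I^{(3)}$. But $x_i^{N}\mathbf{x}^{\mathbf{a}}\notin I^{3}$ for every $N$, since at most two edges fit. In this example $\Delta_{\mathbf{a}}(I^{3})$ is the star with center $j$ and leaves $p,q,r$, together with an isolated vertex $i$. The complex $\Delta_{\mathbf{a}}(I^{(3)})$ is the same star without $i$. So the complexes differ, and the cone point is $j$, a vertex outside the triangle with exponent $1$. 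It is not a triangle vertex with large exponent, as you suggest.

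What is missing is a proof that this picture is the general one. By Lemma \ref{lemma for assrad} (Case 3 of \cite[Lemma 4.1]{mnptv2022}), suppose $\mathbf{b}\in\mathbb{N}^{n}$, $\mathbf{x}^{\mathbf{b}}\notin I^{(3)}$, and the two radicals differ. Then there are a vertex $j$ and a triangle $T\not\ni j$ with $x_j\mathbf{x}_T\mid\mathbf{x}^{\mathbf{b}}$. Every minimal vertex cover $C$ with $\sum_{\ell\in C}b_\ell\leq 2$ contains two vertices of $T$, so it omits $j$. Since $\Delta_{\mathbf{b}}(I^{(3)})$ is generated by the complements of such $C$, it is a cone with apex $j$. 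For $\mathbf{a}\in\mathbb{Z}^{n}$, apply this to $\mathbf{b}=\mathbf{a}^{+}$ and use $\Delta_{\mathbf{a}}=\link_{\Delta_{\mathbf{a}^{+}}}\supp_{-}\mathbf{a}$ with $j\notin\supp_{-}\mathbf{a}$. The link is again a cone or is void, so $H^{i}_{\mathfrak m}(S/I^{(3)})_{\mathbf{a}}=0$ for all $i$. The remaining cases are $\mathbf{x}^{\mathbf{a}^{+}}\in I^{(3)}$ and equal radicals, both handled as in Lemma \ref{surj}. Together these give surjectivity for $k=3$ and every graph. Since Lemma \ref{lemma for assrad} accounts for every discrepancy, the induced $5$-cycles need no separate analysis.

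With this step supplied, your route is a genuine alternative to the paper's, which uses the same pair $(j,T)$ differently. The paper passes to $\mathbf{x}^{\mathbf{b}}=\mathbf{x}^{\mathbf{a}}\prod_{j\in W}x_j$ so that $\sqrt{I^{3}:\mathbf{x}^{\mathbf{b}}}=\sqrt{I^{(3)}:\mathbf{x}^{\mathbf{a}}}$. This gives $\assrad(I^{(3)})\subseteq\assrad(I^{3})$, and the paper concludes with Hochster's formula (Theorem \ref{Hocster}). Your version would give the stronger surjectivity statement, and hence also $\reg I^{(3)}\leq\reg I^{3}$ by the argument of Corollary \ref{ineq of reg}.
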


Our proof is based on associated radical ideals of monomial ideals. More precisely, we indeed prove $\assrad(I(G)^{(k)})\subset\assrad(I(G)^{k})$ for $k=2,3$ in Theorem \ref{assrad}, which is stronger than the depth comparison. Here, for a monomial ideal $I$, $\assrad(I)$ is defined by 
$$\assrad(I)=\{\sqrt{I:u}\,\,: u\mbox{ is a monomial with }u\notin I\}.$$

Also, for the second and fourth powers, we give a sufficient condition for the depth comparison to hold for all powers. Specifically, if $G$ is connected and $\depth S/I(G)^{2}\leq1$ or $\depth S/I(G)^{4}=0$, then $\depth S/I(G)^{(k)}\geq\depth S/I(G)^{k}$ for all $k\geq1$ (see, Proposition \ref{sufficient}). 

To obtain further results on the depth comparison, we investigate the symbolic-ordinary discrepancy module $I(G)^{(k)}/I(G)^{k}$ of the edge ideal $I(G)$. This module was studied in \cite{hm2026} using relative simplicial complexes. We give a graph-theoretic description of its support in terms of induced odd cycles (see, Theorem \ref{locus}). As a consequence, we obtain the explicit formula for $\dim I(G)^{(k)}/I(G)^{k}$. In particular, this recovers Corollary 7.8 and Theorems 7.10 and 7.11 as immediate consequence, and yields complete answers to Question 7.9 and Problem 7.13 in \cite{hm2026}. 

\begin{thm}[see, Corollary \ref{char of dim}]
Let $G$ be a graph and $k\geq1$. Then we have 
$$\dim I(G)^{(k)}/I(G)^{k}=\max\{\alpha(G-N_{G}[C])\,\,: C\in\mathcal{C}_{k}(G)\},$$
where $\mathcal{C}_{k}(G)=\{C\,\,: C\mbox{ is an induced odd cycle of }G\mbox{ with }|C|\leq 2k-1\}$ and we use the convention that $\max\emptyset=\dim 0=-\infty$.
\end{thm}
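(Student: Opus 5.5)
The plan is to compute the support of $M_k=I(G)^{(k)}/I(G)^{k}$ prime by prime and then take the maximum dimension. Since $M_k$ is a $\mathbb{Z}^n$-graded module over $S$, its support is a union of monomial primes $P_F=(x_i : i\notin F)$ for $F\subseteq[n]$. Moreover, $\dim M_k=\max\{|F| : P_F\in\supp M_k\}$, with the convention $-\infty$ when $M_k=0$. Localizing at $P_F$ amounts to inverting the variables $x_i$, $i\in F$, i.e.\ setting them equal to $1$. Under this localization, $I(G)$ becomes $(x_j : j\in N_G(F))+I(G-N_G[F])$ when $F$ is independent, and becomes the unit ideal otherwise. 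Symbolic and ordinary powers both commute with this localization.

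\emph{Step 1.} Show that $(M_k)_{P_F}\neq0$ if and only if $F$ is independent and the localized ordinary and symbolic $k$-th powers differ. Write $J=(x_j: j\in N_G(F))$ and $H=G-N_G[F]$; the variables of $J$ and of $I(H)$ are disjoint. For a sum of ideals in disjoint variables, the symbolic power satisfies $(J+I(H))^{(k)}=\sum_{a+b=k}J^{a}I(H)^{(b)}$. This is the known binomial expansion of symbolic powers of sums (Hà--Nguyen--Trung--Trung), and it applies because $J$ is generated by variables, so $J^{(a)}=J^{a}$. The ordinary power is $\sum_{a+b=k} J^a I(H)^b$. Comparing the two expansions degree by degree in the $J$-variables, the localized discrepancy is nonzero if and only if $I(H)^{(b)}\ne I(H)^{b}$ for some $b\le k$. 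By Rinaldo--Terai--Yoshida this happens if and only if $H$ contains an induced odd cycle of length at most $2k-1$.

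\emph{Step 2.} Translate the condition of Step 1 into the statement of the theorem: $P_F\in\supp M_k$ if and only if $F$ is independent and some $C\in\mathcal C_k(G)$ satisfies $F\cap N_G[C]=\emptyset$. An induced odd cycle of $H$ is an induced odd cycle $C$ of $G$ avoiding $N_G[F]$, and avoiding $N_G[F]$ is equivalent to $F\cap N_G[C]=\emptyset$. Taking the maximum of $|F|$ over such $F$ gives the largest independent set in $G-N_G[C]$, maximized over $C\in\mathcal C_k(G)$, which is exactly $\max\{\alpha(G-N_G[C])\}$. If $\mathcal C_k(G)=\emptyset$, then $M_k=0$ by Rinaldo--Terai--Yoshida, which matches the convention $\max\emptyset=-\infty$.

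\emph{Main obstacle.} I expect the hard part to be the rigorous identification in Step 1 of the localized quotient as nonzero exactly when some $I(H)^{(b)}\neq I(H)^{b}$. The ``if'' direction is the delicate one: from $f\in I(H)^{(b)}\setminus I(H)^b$ one needs $u\,f$ with $u$ a monomial of degree $k-b$ in $J$ to lie outside the ordinary power $\sum_{a} J^a I(H)^{k-a}$. I would handle this by $\mathbb{Z}^n$-grading. Restrict to $J$-degree exactly $k-b$; there the ordinary power is $J^{k-b}I(H)^{b}$ plus terms $J^{k-b}\cdot J^{a'}$-type pieces with $a'>k-b$ absent in that degree. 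Since $b\mapsto I(H)^b$ is decreasing, this reduces to $f\notin I(H)^b$. Alternatively, to avoid citing the binomial expansion, I would work directly with the odd cycle $C$: take $m=\prod_{v\in C}x_v\in I(G)^{(k)}\setminus I(G)^k$ when $|C|=2k-1$, padded with a power of an edge for smaller cycles. Then one checks that its image survives after inverting $x_F$, and this holds because no edge meets both $F$ and $C$.
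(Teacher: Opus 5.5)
Your proposal is correct and is essentially the paper's argument. Localizing at $P_F$, reducing to $(x_j : j\in N_G(F))+I(G-N_G[F])$, and combining the H\`a--Nguyen--Trung--Trung binomial expansion with Rinaldo--Terai--Yoshida is exactly Lemma~\ref{criteria of neq0}; the paper then records the support as $\bigcap_{C\in\mathcal{C}_k(G)}(I(G)+(x_i : i\in N_G[C]))$ (Theorem~\ref{locus}) and maximizes $|F|$ just as you do. One adjustment is needed in the ``if'' direction: a $J$-monomial of degree $k-b>0$ does not exist when $N_G(F)=\emptyset$ (e.g.\ $F=\emptyset$), so take $b=k$ and $u=1$ as the paper does, which is allowed because by Rinaldo--Terai--Yoshida $I(H)^{(b)}\neq I(H)^b$ for some $b\le k$ forces $I(H)^{(k)}\neq I(H)^k$.
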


We then apply this formula to the depth comparison. As a result, if $G$ has no induced subgraph isomorphic to  $(C_{2r+1}\sqcup2K_{1})$ for all $1\leq r\leq k-1$, then Conjecture \ref{conj} holds for this $k$ (see, Corollary \ref{depth comparison free}).  


\section{Preliminaries}

In this section, we recall several definitions and known results from combinatorial commutative algebra that will be used this paper. We refer the reader to \cite{BHBook, HHBook, VBook} for the detailed information. Readers who are already familiar with these concepts may skip to the next section. 

\subsection{Graphs and simplicial complexes}
Let $G$ be a finite graph without loops and multiple edge. We denote by $V(G)$ the vertex set of $G$ and $E(G)$ the edge set of $G$. Set $V(G)=[n]=\{1,\ldots, n\}$ and let $S=K[x_{1},\dots,x_{n}]$ be the polynomial ring in $n$ variables over a field $K$. Then the {\it edge ideal} of $G$, denoted by $I(G)$, defined as 
$$I(G)=(x_{i}x_{j}\,\,:\{i,j\}\in E(G)).$$
Edge ideals were introduced by Villarreal in \cite{v1990}. For a subset $W\subset V(G)$, we denote $G|_{W}$ the induced subgraph $W$. A subset $C$ of $V(G)$ is called {\it vertex cover}, if $C\cap e\neq\emptyset$ for every edge $e$ of $G$. Also, if $C$ is a vertex cover that is minimal with respect to inclusion, then $C$ is called a {\it minimal vertex cover} of $G$. The \textit{cover ideal}, denoted by $J(G)$, of $G$ is the monomial ideal $$J(G)=(\prod_{i\in W}x_{i}\,\,: W \mbox{ is a minimal vertex cover of } G).$$ 
Moreover, a subset $A$ of $V(G)$ is called an {\it independent set}, if no two vertices in $A$ are adjacent to each other. In particular, if $A$ is an independent set of $G$ that is maximal with respect to inclusion, then $A$ is called a {\it maximal independent set} of $G$. We set $$\alpha(G)=\max\{|A|\,\,: A\mbox{ is an independent set of }G\}.$$
The {\it independence complex} of $G$ is the set of independent sets of $G$, which forms a simplicial complex $\Delta(G)$. A subset $M$ of $E(G)$ is called a {\it matching}, if no two edges in $M$ share a common vertex. Furthermore, if there is no edge in $E(G)\setminus M$ that is contained in the union of edges of $M$, then $M$ is called an {\it induced matching}. Then, the {\it induced matching number} of $G$, denoted by ${\rm im}(G)$, is defined as $${\rm im}(G)=\max\{|M|\,\,: M\mbox{ is an induced matching of }G\}.$$
For a subset $A\subset V(G)$, we denote by $N(A)$ and $N[A]$ the open and closed neighborhoods of $A$, respectively, and write $G-A=G|_{V(G)\setminus A}$. Also, we denote by $G^{c}$ the complement graph of $G$. 

A \emph{simplicial complex} $\Delta$ on $[n]=\{1,\ldots,n\}$ is a collection of subsets of $[n]$ closed under taking subsets, that is, $F\in\Delta$ and $H\subseteq F$ imply $H\in \Delta$. Elements of $\Delta$ are called {\it faces}, and maximal faces under inclusion are {\it facets}. We denote the set of facets of $\Delta$ by $\mathcal{F}(\Delta)$. For a face $F \in \Delta$, the {\it link} of $F$ in $\Delta$ is the simplicial complex $\link_{\Delta}F=\{G\subseteq[n]\setminus F\,\,:G\cup F\in\Delta\}$. The {\it Stanley--Reisner ideal} of a simplicial complex $\Delta$ is the ideal, denoted by $I_{\Delta}$, generated by monomials corresponding to the non-faces of $\Delta$, that is, 
$$I_{\Delta}=(x_{i_{1}}\cdots x_{i_{p}}\,\,:\{i_{1}\dots,i_{p}\}\notin\Delta)$$
The ring $K[\Delta]=S/I_\Delta$ is called the {\it Stanley–Reisner ring} of $\Delta$. 

\subsection{Monomial ideals and homological invariants}
Let $I$ be a monomial ideal of $S$. Then we denote by $\mathcal{G}(I)$ the set of minimal monomial generators of $I$. For a subset $F\subset[n]$, we set ${\bf x}_{F}=\prod_{i\in F}x_{i}$, where we set ${\bf x}_{\emptyset}=1$. 

Let $\mathfrak{m}=(x_{1},\ldots, x_{n})$ be the homogeneous maximal ideal, and let $M$ be a finitely generated graded $S$-module. We denote by $\pd M,\depth M$, and $\reg M$ the projective dimension, the depth, and the (Castelnuovo--Mumford) regularity of $M$, respectively. For $i \geq 0$, we write $H^i_{\mathfrak{m}}(M)$ for the $i$-th {\it local cohomology module} of $M$ with respect to $\mathfrak{m}$. The depth and the regularity of $M$ can be described in terms of local cohomology 
$$\depth(M)=\min\{i\,\,:H^i_{\mathfrak{m}}(M)\neq 0\}\mbox{ and }\reg(M)=\max\{i+j\,\,: H^{i}_{\mathfrak{m}}(M)_{j}\neq 0\}.$$

\subsection{Symbolic powers}
Let $I$ be an ideal of $S$. We denote by $\Ass(S/I)$ the set of associated prime ideals of $S/I$, and by $\Min(I)$ the set of minimal prime ideals of $I$. For $k\geq1$, the $k$-th {\it symbolic power} of $I$ is defined by $$I^{(k)}=\bigcap_{P\in\Ass(S/I)}(I^k S_P \cap S).$$
If $I$ is a squarefree monomial ideal with $\Min(I)=\{P_{1},\ldots, P_{r}\}$, then it is known that $$I^{(k)}=P_{1}^{k}\cap\cdots\cap P_{r}^{k}.$$
For a vector $\mathbf{a} = (a_1,\dots,a_n) \in \mathbb{Z}^n$, we set ${\textbf x}^{\mathbf{a}} = x_1^{a_1}\cdots x_n^{a_n}$, $\supp({\bf a})=\{i\,\,:a_{i}\neq0\}$, and $\supp_{-}{\bf a}=\{i\,\,: a_{i}<0\}$. For a subset $F \subseteq [n]$, let $S_F = S[x_j^{-1} : j \in F]$. The simplicial complex $\Delta_{\mathbf{a}}(I)$ is defined as
$$\Delta_{\mathbf{a}}(I)=\{F\setminus\supp_{-}{\bf a}\,\,:\supp_{-}{\bf a} \subseteq F \subseteq [n], \ {\textbf x}^{\mathbf{a}} \notin I S_{F}\}$$
Then it is known that the combinatorial description of the local cohomology module $H^i_{\mathfrak{m}}(S/I)_{\mathbf{a}}$ for monomial ideals by Takayama.  
\begin{thm}[{\normalfont\cite[Theorem 1]{t2005}}]
\label{thm:Takayama}
Let $I\subset S$ be a monomial ideal. Then for every $\mathbf{a} \in \mathbb{Z}^n$ and every $i \geq 0$,
$$H^{i}_{\mathfrak{m}}(S/I)_{\bf a}\simeq\widetilde{H}^{i-|\supp_-({\mathbf{a}})|-1}(\Delta_{\bf a}(I);K).$$
\end{thm}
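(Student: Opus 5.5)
The plan is to compute $H^{i}_{\mathfrak{m}}(S/I)$ with the Čech complex on $x_{1},\ldots,x_{n}$ and to identify its degree-${\bf a}$ strand with the reduced simplicial cochain complex of $\Delta_{\bf a}(I)$, shifted by $|\supp_{-}{\bf a}|+1$. Recall that $H^{i}_{\mathfrak{m}}(S/I)$ is the $i$-th cohomology of
$$C^{\bullet}:\ 0\to S/I\to\bigoplus_{|F|=1}(S/I)_{{\bf x}_F}\to\cdots\to\bigoplus_{|F|=i}(S/I)_{{\bf x}_F}\to\cdots,$$
where $(S/I)_{{\bf x}_F}=S_F/IS_F$. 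The differentials are the signed localization maps. Every term is $\mathbb{Z}^{n}$-graded, so it suffices to fix ${\bf a}\in\mathbb{Z}^{n}$ and take the degree-${\bf a}$ part of each term.

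\textbf{Step 1: each graded piece is $K$ or $0$.} Since $IS_F$ is a monomial ideal of $S_F$, the module $(S_F/IS_F)_{\bf a}$ is spanned by the class of ${\bf x}^{\bf a}$. This class lies in the module only if ${\bf x}^{\bf a}\in S_F$, i.e. $\supp_{-}{\bf a}\subseteq F$. Hence $(S_F/IS_F)_{\bf a}\cong K$ exactly when $\supp_{-}{\bf a}\subseteq F$ and ${\bf x}^{\bf a}\notin IS_F$, and it is $0$ otherwise.

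\textbf{Step 2: the surviving sets form a simplicial complex.} If $F\subseteq F'$, then $IS_F\subseteq IS_{F'}$, so ${\bf x}^{\bf a}\notin IS_{F'}$ forces ${\bf x}^{\bf a}\notin IS_F$. Thus the family of such $F$ containing $\supp_{-}{\bf a}$ is closed under passing to subsets that still contain $\supp_{-}{\bf a}$. After deleting $\supp_{-}{\bf a}$ this is exactly $\Delta_{\bf a}(I)$. The bijection $F\mapsto F\setminus\supp_{-}{\bf a}$ sends sets with $|F|=i$ to faces of dimension $i-|\supp_{-}{\bf a}|-1$. The set $F=\supp_{-}{\bf a}$ (when it survives) goes to the empty face, which is why reduced cohomology appears.

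\textbf{Step 3: matching the differentials.} For $F\subseteq F\cup\{j\}$ with both pieces nonzero, the localization map $(S_F/IS_F)_{\bf a}\to(S_{F\cup\{j\}}/IS_{F\cup\{j\}})_{\bf a}$ sends ${\bf x}^{\bf a}\mapsto{\bf x}^{\bf a}$, so it is the identity on $K$. If the target is zero, the map is zero, which matches the fact that $F\cup\{j\}$ is not a face. The Čech sign is $(-1)^{\#\{l\in F: l<j\}}$. For $j\notin\supp_{-}{\bf a}$, this differs from the simplicial coboundary sign $(-1)^{\#\{l\in F\setminus\supp_{-}{\bf a}: l<j\}}$ by $(-1)^{\#\{l\in\supp_{-}{\bf a}: l<j\}}$. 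I would absorb this discrepancy by rescaling the basis element of each $F$ by the sign $\epsilon(F)=(-1)^{\sum_{l\in F\setminus\supp_{-}{\bf a}}\#\{m\in\supp_{-}{\bf a}: m<l\}}$. This gives an isomorphism of complexes $C^{\bullet}_{\bf a}\cong\widetilde{C}^{\bullet-|\supp_{-}{\bf a}|-1}(\Delta_{\bf a}(I);K)$, and taking cohomology yields the theorem.

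I expect the main obstacle to be this sign bookkeeping, together with the degenerate cases. If no $F$ survives, $\Delta_{\bf a}(I)$ is the void complex and both sides vanish. If only $F=\supp_{-}{\bf a}$ survives, $\Delta_{\bf a}(I)=\{\emptyset\}$, whose only nonzero reduced cohomology is $\widetilde{H}^{-1}=K$; this matches $H^{|\supp_{-}{\bf a}|}_{\mathfrak{m}}$. I would check both conventions explicitly.
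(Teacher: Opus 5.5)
Your proposal is correct: the degree-${\bf a}$ strand of the Čech complex of $S/I$ is spanned exactly by the sets $F\supseteq\supp_{-}{\bf a}$ with ${\bf x}^{\bf a}\notin IS_F$, and your sign twist works, because $\epsilon(F\cup\{j\})\epsilon(F)=(-1)^{\#\{m\in\supp_{-}{\bf a}\,:\,m<j\}}$ cancels the discrepancy between the Čech and simplicial coboundary signs. The paper gives no proof of its own here, only the citation \cite[Theorem 1]{t2005}; your argument is the standard degree-by-degree Čech-complex computation that establishes this formula, carried out directly for the paper's formulation of $\Delta_{\bf a}(I)$, including the void and $\{\emptyset\}$ cases.
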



\section{Minh's conjecture for simplicial graphs}\label{reg}

In this section, we give a regularity formula for edge ideals of simplicial graphs as a partial result of Minh's conjecture posed in \cite[Conjecture A]{mnptv2022}. To prove this, we first prove that Minh's conjecture holds for edge ideals of whisker graphs. After that, we give an inequality for regularities of symbolic powers of squarefree monomial ideals under certain conditions using lcm-lattices. Applying this inequality to a suitable multi-whisker graph, we prove the main theorem of this section. We also study the relationships between the local cohomology modules of ordinary and symbolic powers of edge ideals. 
 
To give a formula for the regularity of symbolic powers of edge ideals of simplicial graph, we first determine the regularity of symbolic powers of edge ideals of whisker graphs. 

\begin{thm}\label{whisker}
For a graph $G$, the whisker graph $W(G)$ satisfies 
$$\reg I(W(G))^{(k)}=\reg I(W(G))^{k}=2k+\im(W(G))-1\mbox{ for all }k\geq1.$$
\end{thm}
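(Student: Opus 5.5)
I would prove Theorem \ref{whisker} by combining a lower bound valid for all graphs with an upper bound for the symbolic powers. The ordinary-power part should come from known results, so the real content is the symbolic powers.

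\emph{Lower bound.} For any graph $H$ and $k\geq1$, the reasoning of Beyarslan--H\`a--Trung gives $\reg I(H)^{k}\geq 2k+\im(H)-1$. Take an induced matching $M$ of size $\im(H)$. Restricting to the induced subgraph on $V(M)$ does not increase the regularity of powers, and the powers of the edge ideal of $|M|$ disjoint edges have regularity $2k+|M|-1$. The same restriction argument works for symbolic powers. If $H'$ is an induced subgraph of $H$ on $W$, then $I(H')^{(k)}$ is obtained from $I(H)^{(k)}$ by setting $x_i=0$ for $i\notin W$, which is a retract in the multigraded sense. Moreover, for a bipartite graph such as a disjoint union of edges, symbolic and ordinary powers coincide. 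Hence $\reg I(H)^{(k)}\geq 2k+\im(H)-1$ as well.

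\emph{Upper bound for ordinary powers.} The whisker graph $W(G)$ is very well-covered, and in fact Cohen--Macaulay. Known results then give $\reg I(W(G))^{k}=2k+\im(W(G))-1$ (for instance via Banerjee's even-connection method, or via the results of Norouzi--Seyed Fakhari--Yassemi on very well-covered graphs). So the ordinary case is settled.

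\emph{Upper bound for symbolic powers.} This is the main step. I would use the whisker structure directly. Write $V(W(G))=\{x_1,\dots,x_n,y_1,\dots,y_n\}$ with whisker edges $x_iy_i$. Every minimal vertex cover picks exactly one of $x_i,y_i$ for each $i$, subject to the edges of $G$. I would argue by induction on $k$ and on $n$, using the short exact sequences
$$0\to S/(I^{(k)}:y_n)(-1)\to S/I^{(k)}\to S/(I^{(k)},y_n)\to 0.$$
Here $I^{(k)}:y_n$ and $I^{(k)}+(y_n)$ are again symbolic-power-type ideals of whisker graphs of smaller graphs, or of lower powers. Concretely, $(I^{(k)},y_n)$ reduces to $x_n^k$ times data on $G-n$, plus colon ideals of lower order. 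A more uniform route would be Seyed Fakhari's inequality $\reg I^{(k)}\le\reg I^{(k-1)}+2$ for very well-covered/whiskered situations, obtained by colon-ing by the generator $x_ny_n$: one shows $I^{(k)}:x_ny_n=I^{(k-1)}$, since each minimal prime contains exactly one of $x_n,y_n$. Iterating this over the generators gives the estimate
$$\reg I^{(k)}\le\max\{\reg (I^{(k)}:u)+\deg u,\ \reg(I^{(k)}+(u))\}.$$
The first term is controlled by induction on $k$. The second term is the hard one: I would show it equals a symbolic power of the edge ideal of a whisker graph on fewer vertices, up to adding variables. Concretely, adding $x_ny_n$ kills that prime component pattern, and I expect $I^{(k)}+(x_n y_n)$ to split, via a further colon by $x_n$, into symbolic powers for $W(G-n)$ and $W(G-N[n])$. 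For these, $\im$ drops appropriately, which closes the induction.

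The expected obstacle is exactly this bookkeeping for $I^{(k)}+(u)$. Symbolic powers do not behave well under adding generators, so I would fall back on polarization and Takayama's formula (Theorem \ref{thm:Takayama}) to check that the degree complexes $\Delta_{\bf a}(I^{(k)})$ are acyclic in the required degrees, if the exact-sequence approach stalls.
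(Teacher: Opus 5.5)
Your lower bound (restricting to an induced matching, where symbolic and ordinary powers agree) is fine, and so is citing known results for the ordinary powers; together these amount to what the paper cites. The gap is the upper bound $\reg I(W(G))^{(k)}\le 2k+\alpha(G)-1$, which is the entire content of the theorem.

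You cannot take $\reg I^{(k)}\le\reg I^{(k-1)}+2$ as an input. Together with $\reg I(W(G))=\alpha(G)+1$ it already implies the theorem. The sequence for $u=x_ny_n$ (where indeed $I^{(k)}:x_ny_n=I^{(k-1)}$) only reduces it to showing $\reg(I^{(k)}+(x_ny_n))\le 2k+\alpha(G)-1$.

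Your plan for that term fails as stated. Write $P_A=(x_i: i\notin A)+(y_j: j\in A)$ for $A\in\Delta(G)$. One does have $I^{(k)}+(x_n)=I(W(G-n))^{(k)}T+(x_n)$. However,
\[
(I^{(k)}+(x_ny_n)):x_n=(I^{(k)}:x_n)+(y_n),\qquad I^{(k)}:x_n=\bigcap_{n\in A}P_A^{k}\cap\bigcap_{n\notin A}P_A^{k-1},
\]
which is a mixed intersection, not a symbolic power of $I(W(G-N[n]))$ plus variables. Concretely, take $G=K_3$, $k=2$, $n=3$. Then $G-N[3]$ is empty, yet $(I^{(2)}:x_3)+(y_3)$ contains $x_1x_2$ (since $x_1x_2x_3\in I^{(2)}$) but neither $x_1$ nor $x_2$.

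The same happens in your first sequence. There $I^{(k)}:y_n$ is again a mixed intersection, and $I^{(k)}+(y_n)$ is the symbolic power of $W(G)-y_n$, which is not a whisker graph. So the induction on $n$ does not close. Saying you would ``fall back on Takayama's formula'' names a tool without the degree bound it would have to deliver.

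What is missing are the two ideas of the paper's proof.

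\emph{First, add all whisker generators, not one.} With $Q_j=(x_iy_i: i\le j)$ one has $(I^{(k)}+Q_{j-1}):x_jy_j=I^{(k-1)}+Q_{j-1}$. So the right induction statement is $\reg T/(I(W(G))^{(k)}+Q_j)\le 2(k-1)+\alpha(G)$ for all $j$ simultaneously. Your claim that the colon term is ``controlled by induction on $k$'' is only valid, once you iterate, with this strengthened hypothesis. The terminal ideal is $I(W(G))^{(k)}+Q_m=I(G)^{(k)}T+Q_m$ (Lemma \ref{lemma1}). By Betti splittings (Lemma \ref{lemma2}), its regularity is $\max_F\{\reg S/(I(G)^{(k)}:\mathbf{x}_F)+|F|\}$.

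\emph{Second, an asymptotic input is needed} to bound this maximum by $2(k-1)+\alpha(G)$ (Lemma \ref{lemma3}). Via Takayama's formula and passage to links $\Gamma$ of $\Delta(G)$, the bound reduces to the following fact: the top nonvanishing degree of each $H^p_{\mathfrak m}$ of the symbolic power $I_\Gamma^{(k)}$ is at most $\delta(I_\Gamma)(k-1)$. Here $\delta(I_\Gamma)\le\delta(I(G))=\lim_k\reg I(G)^{(k)}/k=2$. Nothing in your sketch provides a substitute for this bound.
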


To prove Theorem \ref{whisker}, we prepare several lemmas. 
Let $G$ be a graph on the vertex set $X_{[m]}=\{x_{1},\ldots, x_{m}\}$, and let $W(G)$ be the whisker graph on the vertex set $X_{[m]}\cup Y_{[m]}$, where $Y_{[m]}=\{y_{1},\ldots, y_{m}\}$, and whose edge set is
$$E(W(G))=E(G)\cup\{\{x_{i},y_{i}\}\,\,: 1\leq i\leq m\}.$$
Set polynomial rings $S=K[V(G)]$ and $T=K[V(W(G))]$.
Also, set $Q_{r}=(x_{i}y_{i}\,\,: 1\leq i\leq r)$ for each $1\leq r\leq m$ and set $Q_{0}=(0)$.  

\begin{lemma}\label{lemma1}
With the notation introduced, we have 
$$I(W(G))^{(k)}+Q_{m}=I(G)^{(k)}T+Q_{m}\mbox{ for all }k\geq1.$$
\end{lemma}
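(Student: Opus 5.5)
We prove the two inclusions directly from the primary decompositions, since all ideals involved are monomial. First we describe $\Min(I(W(G)))$. A minimal vertex cover of $W(G)$ must contain $x_i$ or $y_i$ for each $i$. If $C$ is a vertex cover of $G$, then $C\cup\{y_i : x_i\notin C\}$ is a minimal vertex cover of $W(G)$. Conversely, every minimal vertex cover of $W(G)$ has this form for some vertex cover $C$ of $G$; in fact $C$ need not be minimal in $G$. So the minimal primes of $I(W(G))$ are
\[
P_C=(x_i : x_i\in C)+(y_i : x_i\notin C),
\]
where $C$ runs over all vertex covers of $G$. Hence
\[
I(W(G))^{(k)}=\bigcap_{C}P_C^{k}.
\]

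For the inclusion $\supseteq$, note that $I(G)^{(k)}T=\bigcap_{C\ \mathrm{minimal}}(x_i : i\in C)^kT$. For each vertex cover $C$ of $G$ choose a minimal vertex cover $C'\subseteq C$ of $G$. Then $(x_i:i\in C')^k\subseteq P_C^k$. Therefore $I(G)^{(k)}T\subseteq I(W(G))^{(k)}$, and adding $Q_m$ to both sides gives the inclusion.

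For the inclusion $\subseteq$, both sides are monomial ideals, so it suffices to take a monomial $u\in I(W(G))^{(k)}$ with $u\notin Q_m$ and show $u\in I(G)^{(k)}T$. Since $u\notin Q_m$, for each $i$ the monomial $u$ is not divisible by $x_iy_i$. Write $u=v\cdot w$ with $v\in S$ a monomial in the $x$'s and $w$ a monomial in the $y$'s. Then $\supp(v)$ and the index set of $\supp(w)$ are disjoint.

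It remains to show $v\in(x_i:i\in C)^k$ for every minimal vertex cover $C$ of $G$. Fix such a $C$ and consider the vertex cover $D=C\cup\{x_j : y_j\mid w\}$ of $G$. Under $P_D$, every variable $y_j$ dividing $w$ has $x_j\in D$, so $y_j\notin P_D$. The degree of $u$ in $P_D$ therefore equals the degree of $v$ in $(x_i : i\in D)$. A variable $x_j$ with $j\in D\setminus C$ satisfies $y_j\mid w$, so $x_j\nmid v$. Hence the degree of $v$ in $(x_i:i\in D)$ equals its degree in $(x_i : i\in C)$. From $u\in P_D^k$ we get that this degree is at least $k$, so $v\in(x_i:i\in C)^k$. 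This gives $u\in I(G)^{(k)}T$.

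The one delicate point is the choice of the enlarged, non-minimal cover $D$ that absorbs the $y$-variables of $u$. This is the main step, and it works precisely because $\Min(I(W(G)))$ is indexed by all vertex covers of $G$, not only the minimal ones.
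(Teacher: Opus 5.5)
Your proof is correct and is essentially the paper's argument, phrased with vertex covers instead of independent sets. Your enlarged cover $D=C\cup\{x_j : y_j\mid w\}$ is exactly the complement of the independent set $F\setminus B$ used in the paper, and your choice of a minimal cover $C'\subseteq C$ for the reverse inclusion corresponds to the paper's choice of a facet $F\supseteq A$ of $\Delta(G)$.
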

\begin{proof}
Fix $u={\bf x}^{\bf a}{\bf y}^{\bf b}\in I(W(G))^{(k)}+Q_{m}.$ We may assume that $u\notin Q_{m}$, which implies that $a_{i}b_{i}=0$ for each $i$. Then we have 
$$u\in I(W(G))^{(k)}=\bigcap_{A\in\Delta(G)}P_{A}^{k},$$
where $P_{A}=(x_{i}\,\,: i\notin A)+(y_{j}\,\,: j\in A)$. Hence, we see that $\sum_{i\notin A}a_{i}+\sum_{i\in A}b_{i}\geq k$ for every $A\in\Delta(G)$. Fix $F\in\mathcal{F}(\Delta(G))$ and set $B=\{i\,\,: b_{i}>0\}$. Then, since $F\setminus B\in\Delta(G)$, we have $\sum_{i\notin F\setminus B}a_{i}+\sum_{i\in F\setminus B}b_{i}\geq k$. From the fact that $a_{i}b_{i}=0$ for each $i$, we see that $\sum_{i\notin F}a_{i}\geq k$. Therefore, we obtain ${\bf x}^{\bf a}\in I(G)^{(k)}$, which implies $u\in I(G)^{(k)}T$, as desired. Conversely, fix $v={\bf x}^{\bf a}{\bf y}^{\bf b}\in I(G)^{(k)}T+Q_{m}$. We may assume that $v\notin Q_{m}$, which implies that $a_{i}b_{i}=0$ for each $i$. Then we have 
$${\bf x}^{\bf a}\in I(G)^{(k)}=\bigcap_{F\in\mathcal{F}(\Delta(G))}(x_{i}\,\,: i\notin F)^{k}.$$Fix $A\in\Delta(G)$. Then there exists a facet $F\in\mathcal{F}(\Delta(G))$ such that $A\subset F$. Since $\sum_{i\notin F}a_{i}\geq k$, we obtain 
$$\sum_{i\notin A}a_{i}+\sum_{i\in A}b_{i}\geq\sum_{i\notin A}a_{i}\geq\sum_{i\notin F}a_{i}\geq k,$$
which implies that $v\in I(W(G))^{(k)}$, as required. 
\end{proof}

\begin{lemma}\label{lemma2}
Let $S=K[x_{1},\ldots, x_{m}]$, $T=K[x_{1},\ldots, x_{m}, y_{1},\ldots, y_{m}]$ be polynomial rings over a field $K$ and let $I$ be a monomial ideal of $S$. Then we have  
$$\reg T/(IT+Q_{m})=\max_{F\subset[m]}\{\reg S/(I\,\,: {\bf x}_{F})+|F|\},$$
where we set $\reg 0=-\infty$.  
\end{lemma}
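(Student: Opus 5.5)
We argue by induction on $m$, the number of added binomial-free quadrics $x_iy_i$, proving the more general statement for $Q_r$ in place of $Q_m$, where $T_r=S[y_1,\dots,y_r]$. The claim to prove is
$$\reg T_r/(IT_r+Q_r)=\max_{F\subset[r]}\{\reg S/(I:{\bf x}_F)+|F|\}.$$
The case $r=0$ is trivial, since then $F=\emptyset$. For the inductive step, put $J=IT_r+Q_{r-1}$, an ideal not involving $y_r$, and $u=x_ry_r$. We use the two short exact sequences
$$0\to T_r/(J:u)(-2)\xrightarrow{\cdot u} T_r/J\to T_r/(J+(u))\to 0,$$
together with the standard fact that $\reg T/(J+(u))$ can be computed from $J$ and $J:u$.

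The structure is clean because $y_r$ is a new variable for $J$. First, $J:x_ry_r=(J:x_r)$ extended, and $J:x_r=(I:x_r)T_r+Q'_{r-1}$, where $Q'_{r-1}$ is $Q_{r-1}$ colon $x_r$, which equals $Q_{r-1}$. Second, $J+(x_ry_r)$ can be analysed by a splitting in $y_r$. Multiplication by $y_r$ is handled by the exact sequence
$$0\to T/(J+(x_ry_r)):y_r\,(-1)\to T/(J+(x_ry_r))\to T/(J+(y_r))\to0.$$
Here $(J+(x_ry_r)):y_r=J+(x_r)$, because $y_r$ does not appear in $J$. Also $T/(J+(y_r))\cong T_{r-1}[x_r]/\ldots$, which has the same regularity as $T_{r-1}/(IT_{r-1}+Q_{r-1})$.

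Alternatively, and more cleanly, $T/(J+(x_ry_r))$ is a fiber product (gluing) of $T/(J+(x_r))$ and $T/(J+(y_r))$ over $T/(J+(x_r,y_r))$. The most robust route is the Betti splitting $J+(x_ry_r)=J+(x_ry_r)$ with intersection $J\cap(x_ry_r)=y_r x_r(J:x_r)$. This gives
$$\reg (J+(x_ry_r))=\max\{\reg J,\ \reg (J:x_r)+2-1\}.$$
This equality holds because $y_r$ is a nonzerodivisor-type variable occurring only in the second summand, so the mapping cone is minimal, as in the standard Betti splitting criterion of Francisco--Hà--Van Tuyl. Translating to quotients gives
$$\reg T_r/(IT_r+Q_r)=\max\{\reg T_{r-1}/(IT_{r-1}+Q_{r-1}),\ \reg T_{r-1}/((I:x_r)T_{r-1}+Q_{r-1})+1\}.$$

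Now apply the inductive hypothesis to both terms, using the ideal $I:x_r$ in the second. Since $(I:x_r):{\bf x}_F=I:{\bf x}_{F\cup\{r\}}$, the second term becomes the maximum over $F\subset[r-1]$ of $\reg S/(I:{\bf x}_{F\cup\{r\}})+|F|+1$. Combining the two terms gives exactly the maximum over all $F\subset[r]$, split according to whether $r\in F$. The convention $\reg 0=-\infty$ takes care of the case where a colon is the unit ideal.

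The main obstacle is justifying the Betti splitting equality, specifically that no cancellation occurs in the mapping cone. I would verify it by a multigrading argument. Every multidegree in the resolution of $J\cap(x_ry_r)$ involves $y_r$ positively, while $J$ involves no $y_r$. Hence the connecting maps $\mathrm{Tor}(J\cap(u))\to\mathrm{Tor}(J)\oplus\mathrm{Tor}((u))$ vanish on the $J$-component. They also vanish on the $(u)$-component, because $\mathrm{Tor}_i((u))=0$ for $i>0$, and in degree $0$ the generator $u$ is not in $J\cap(u)$'s minimal degrees unless $u\in J$, which does not happen.
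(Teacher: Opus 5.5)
Your argument is essentially the paper's proof. It has the same induction on $r$, the same $y_r$-partition $J+(x_ry_r)$ with $J\cap(x_ry_r)=x_ry_r(J:x_r)$ and $J:x_r=(I:x_r)T_r+Q_{r-1}$, and the same recursion. It also applies the induction hypothesis to $I:x_r$ via $(I:x_r):{\bf x}_F=I:{\bf x}_{F\cup\{r\}}$, exactly as the paper does.

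The only difference is how the Betti splitting is justified. You verify it by hand, using multidegrees of $\mathrm{Tor}$ and $\mathrm{Tor}_i((x_ry_r))=0$ for $i>0$. The paper instead cites Francisco--H\`a--Van Tuyl: a $y_r$-partition whose $y_r$-part has a linear resolution is a Betti splitting. Your multidegree verification is correct.

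Two small repairs are needed.

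First, your claim that $x_ry_r\in J$ ``does not happen'' is false. If $x_r\in I$, then $x_ry_r\in IT_r\subset J$. In that case the degree-$0$ Tor map is nonzero, so the decomposition is not a Betti splitting, and the case must be treated separately. It is trivial: $T_r/(J+(x_ry_r))=T_r/J$, and $I:x_r=S$ makes the second term $-\infty$. The paper handles it exactly this way.

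Second, the splitting formula also contains the term $\reg T_r/(x_ry_r)=1$, which you dropped silently. It is absorbed because $\reg T_{r-1}/((I:x_r)T_{r-1}+Q_{r-1})+1\geq 1$ once $x_r\notin I$.

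Your opening remarks, about computing $\reg T/(J+(u))$ from $J$ and $J:u$ by a short exact sequence, or via a fiber product, are not part of the actual proof. The short exact sequence by itself only yields inequalities, so you can drop those remarks.
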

\begin{proof}
Set $T_{r}=K[x_{1},\ldots, x_{m},y_{1},\ldots, y_{r}]$ and $I_{r}=IT_{r}+Q_{r}$, where we set $T_{0}=S$ and $I_{0}=I$. We prove that 
$$\reg T_{r}/I_{r}=\max\{\reg T_{r-1}/I_{r-1}, \reg T_{r-1}/((I:x_{r})T_{r-1}+Q_{r-1})+1\}\mbox{ for each }r\geq1.$$
Suppose that $x_{r}\in I$. Since $x_{r}y_{r}\in IT_{r}$, which implies that $I_{r}=I_{r-1}T_{r}$, and hence, we get $\reg T_{r}/I_{r}=\reg T_{r-1}/I_{r-1}$. Also, since $I:x_{r}=S$, we have $\reg T_{r-1}/((I:x_{r})T_{r-1}+Q_{r-1})=-\infty$, which gives us desired equality. Hence, we suppose that $x_{r}\notin I$. Then we have $I_{r}=I_{r-1}T_{r}+(x_{r}y_{r})$. Notice that $\mathcal{G}(I_{r})=\mathcal{G}(I_{r-1}T_{r})\sqcup\{x_{r}y_{r}\}$ since $x_{r}y_{r}\notin I_{r-1}T_{r}$. Since $I_{r}=I_{r-1}T_{r}+(x_{r}y_{r})$ is the $y_{r}$-partition and the ideal $(x_{r}y_{r})$ has a linear resolution, from \cite[Corollary 2.7]{fhv2009}, we see that $I_{r}=I_{r-1}T_{r}+(x_{r}y_{r})$ is a Betti splitting. Notice that $I_{r-1}T_{r}:y_{r}=I_{r-1}T_{r}$ and $I_{r-1}T_{r}:x_{r}=(I:x_{r})T_{r}+Q_{r-1}$. From this we see that 
\begin{align*}
I_{r-1}T_{r}\cap(x_{r}y_{r})&=x_{r}y_{r}(I_{r-1}T_{r}:x_{r}y_{r})\\
&=x_{r}y_{r}(I_{r-1}T_{r}:x_{r})\\
&=x_{r}y_{r}((I:x_{r})T_{r}+Q_{r-1}T_{r})\\
&=x_{r}y_{r}((I:x_{r})T_{r-1}+Q_{r-1})T_{r}.
\end{align*}
Hence, we have $\reg I_{r-1}T_{r}\cap(x_{r}y_{r})=\reg((I:x_{r})T_{r-1}+Q_{r-1})+2$. Also, we have $\reg T_{r-1}/((I:x_{r})T_{r-1}+Q_{r-1})+1\geq1$, and thus, from  \cite[Corollary 2.2]{fhv2009}, we obtain that 
$$\reg T_{r}/I_{r}=\max\{\reg T_{r-1}/I_{r-1}, \reg T_{r-1}/((I:x_{r})T_{r-1}+Q_{r-1})+1\}.$$
Finally, we prove the equality holds for any monomial ideal using induction on $r$. 
If $r=0$, then the desired equality trivially holds. Hence, we suppose that $r>0$. By the induction hypothesis, we have $\reg T_{r-1}/I_{r-1}=\max_{F\subset[r-1]}\{\reg S/(I:{\bf x}_{F})+|F|\}$ and $\reg T_{r-1}/((I:x_{r})T_{r-1}+Q_{r-1})=\max_{F\subset[r-1]}\{\reg S/(I:{\bf x}_{F\cup\{r\}})+|F|\}$. Therefore, we obtain that 
$$\reg T_{r-1}/((I:x_{r})T_{r-1}+Q_{r-1})+1=\max_{\substack{H\subset[r],\\r\in H}}\{\reg S/(I:{\bf x}_{H})+|H|\},$$ which implies that $\reg T_{r}/I_{r}=\max_{H\subset[r]}\{\reg S/(I:{\bf x}_{H})+|H|\}$, 
as required. 
\end{proof}

\begin{lemma}\label{lemma3}
Let $\Delta$ be a simplicial complex with $I_{\Delta}\neq(0)$ and let $\delta(I_{\Delta})=\lim_{k\rightarrow\infty}\reg I_{\Delta}^{(k)}/k$. Then, for any $k\geq1$ and $F\subset[n]$, we have 
$$\reg S/(I_{\Delta}^{(k)}:{\bf x}_{F})+|F|\leq\delta(I_{\Delta})(k-1)+\dim S/I_{\Delta}.$$
\end{lemma}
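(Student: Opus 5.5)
The plan is to reduce everything to degree complexes via Takayama's formula (Theorem \ref{thm:Takayama}). Then the left-hand side becomes a linear-programming type bound on $|\mathbf a|$, and I will compare it with the known description of $\delta(I_\Delta)$. Write $I=I_\Delta$ and $J=I^{(k)}:\mathbf x_F$. Since $I^{(k)}=\bigcap_{P\in\Min(I)}P^k$ and colon commutes with intersection, we get
$$J=\bigcap_{P\in \Min(I)}P^{\,k-|F\cap P|},$$
with the convention $P^{j}=S$ for $j\le 0$. For $\mathbf a\in\mathbb Z^n$ and $G\supseteq \supp_-\mathbf a$, the condition $\mathbf x^{\mathbf a}\notin JS_G$ says the following: there is $P\in\Min(I)$ with $P\cap G=\emptyset$ (that is, $P$ survives the localization) and $\sum_{j\in P}a_j\le k-1-|F\cap P|$. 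In other words, $\mathbf x^{\mathbf a+\mathbf 1_F}$ lies outside $P^kS_G$. Hence $\Delta_{\mathbf a}(J)$ is the subcomplex of faces $G\setminus\supp_-\mathbf a$ for which $G$ is contained in the complement of such a prime. This complement is a face of $\Delta$.

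Next I bound the degree. Suppose $H^i_{\mathfrak m}(S/J)_{\mathbf a}\neq 0$. Then $\Delta_{\mathbf a}(J)$ has nontrivial reduced cohomology in degree $i-|\supp_-\mathbf a|-1$. Two consequences follow.

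First, some face $G$ realizes this, so $i-1\le |G|-1\le \dim\Delta$. This gives $i\le \dim S/I$ in total, and more precisely $i\le |G|$, where $G\supseteq\supp_-\mathbf a$ misses a prime $P$ satisfying the inequality above.

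Second, the standard argument (as in Minh--Trung style degree-complex computations) shows that nonvanishing forces $\Delta_{\mathbf a}(J)$ to be non-cone-like. This forces $a_j\le 0$ off the relevant primes.

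Consequently
$$|\mathbf a|\le \sum_{j\in P}a_j\le k-1-|F\cap P|\quad\text{for a suitable }P,$$
up to the correction coming from coordinates in $F\cap\supp_-\mathbf a$.

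Then I add $|F|$ and $i$. The coordinates $j\in F\cap P$ are cancelled by the $-|F\cap P|$ term. The coordinates $j\in F\setminus P$ lie in the complement of $P$, a face of $\Delta$, so they can be charged against $\dim S/I$ together with $i$. This is the point where the total $i+|F\setminus P|$ must be bounded by $\dim\Delta+1=\dim S/I$.

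Finally, the factor $\delta(I)$ enters because the inequality $\sum_{j\in P}a_j\le k-1$ must hold simultaneously for all primes $P$ that avoid $G$. The maximum of $\sum_j a_j$ over such $\mathbf a$ is governed by the fractional vertex-cover / Waldschmidt polytope. By the known formula for $\delta$ of squarefree monomial ideals, that maximum is $\le \delta(I)(k-1)$. I would invoke that formula directly and rescale from $k-1$ to $1$.

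The main obstacle I expect is the bookkeeping that combines the homological index $i$, the support $F$, and the degree $|\mathbf a|$ into the single bound $\delta(k-1)+\dim S/I$. In particular, the difficulty is the interaction between coordinates $j\in F$ with $a_j<0$ and the faces used to realize cohomology. My first attempt is to choose $G$ to be a maximal face of $\Delta_{\mathbf a}(J)$ containing $\supp_-\mathbf a$ together with $F\setminus P$. I would then argue that the cohomology in degree $i-|\supp_-\mathbf a|-1$ forces $i\le |G|-|F\setminus(P\cup\supp_-\mathbf a)|$ or a comparable estimate. If that fails, the fallback is induction on $|F|$, using the short exact sequences $0\to S/(J:x_j)(-1)\to S/J\to S/(J+x_j)\to0$.
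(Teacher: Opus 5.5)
Your outline has the right raw ingredients: Takayama's formula, and notably the observation that $\mathbf{x}^{\mathbf{a}}\notin JS_G$ iff $\mathbf{x}^{\mathbf{a}+\mathbf{1}_F}\notin I^{(k)}S_G$. However, the inequality chain you build on them is false. Non-coneness of $\Delta_{\mathbf{a}}(J)$ only says that each $j\notin\supp_{-}\mathbf{a}$ is missed by some facet of the degree complex, so $a_j\le k-1$ coordinatewise. It gives neither $a_j\le 0$ off a single prime nor $|\mathbf{a}|\le\sum_{j\in P}a_j$.

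Take $\Delta$ to be two isolated vertices: $I_\Delta=(x_1x_2)$, $I_\Delta^{(k)}=(x_1^kx_2^k)$, $\delta(I_\Delta)=2$, $\dim S/I_\Delta=1$, and $k\ge2$.
For $F=\emptyset$ and $\mathbf{a}=(k-1,k-1)$, the degree complex is two points. So $H^1_{\mathfrak{m}}(S/I_\Delta^{(k)})_{\mathbf{a}}\neq0$, while $|\mathbf{a}|=2k-2>k-1$.
For $F=\{1\}$ one has $J=(x_1^{k-1}x_2^k)$. The regularity is attained at $\mathbf{a}=(k-2,k-1)$ with $i=1$, and the lemma holds with equality. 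Since $i=\dim S/I_\Delta$ already, charging $F\setminus P$ to the dimension fails for $P=(x_2)$. For $P=(x_1)$, your cancellation would require $|\mathbf{a}|\le k-2$, which is false.

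So splitting $F$ into $F\cap P$ and $F\setminus P$ is the wrong decomposition, and the closing appeal to $\delta$ does not rescue it. The constraints $\sum_{j\in P}a_j\le k-1$ hold only for the primes whose complements are facets of the degree complex; a face needs one prime avoiding it, not all. Turning these constraints into $|\mathbf{a}|\le\delta(I)(k-1)$ is not a rescaling of a formula for $\delta$. It is a genuine theorem from the literature, which the paper imports: for squarefree $I$, every $H^p_{\mathfrak{n}}(R/I^{(k)})$ vanishes in degrees above $\delta(I)(k-1)$.

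The missing idea is to split $F$ along $\supp_{-}\mathbf{a}$ and push all of $F\setminus\supp_{-}\mathbf{a}$ into the degree. The paper picks $\mathbf{a}$ realizing $\reg S/J$. Replacing negative entries by $-1$ leaves the degree complex unchanged, so by maximality they already equal $-1$. Set $U=[n]\setminus\supp_{-}\mathbf{a}$ and $\mathbf{b}=\mathbf{a}+\mathbf{1}_{F\setminus\supp_{-}\mathbf{a}}\in\mathbb{N}^U$. Your shift observation, together with the link reduction $\Delta_{\mathbf{c}}(I_\Delta^{(k)})=\Delta_{\mathbf{c}^{+}}(I^{(k)}_{\link_\Delta\supp_{-}\mathbf{c}})$, yields $\Delta_{\mathbf{a}}(J)=\Delta_{\mathbf{b}}(I^{(k)}_{\link_\Delta\supp_{-}\mathbf{a}})$. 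Hence $H^p_{\mathfrak{n}}(R/I^{(k)}_{\link_\Delta\supp_{-}\mathbf{a}})_{\mathbf{b}}\neq0$ with $p=i-|\supp_{-}\mathbf{a}|$.

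The end-degree theorem plus $\delta(I_{\link_\Delta\supp_{-}\mathbf{a}})\le\delta(I_\Delta)$ gives $|\mathbf{b}|\le\delta(I_\Delta)(k-1)$. The bookkeeping is then $\reg S/J+|F|=|\mathbf{b}|+p+|F\cap\supp_{-}\mathbf{a}|\le|\mathbf{b}|+\dim R/I_{\link_\Delta\supp_{-}\mathbf{a}}+|\supp_{-}\mathbf{a}|\le\delta(I_\Delta)(k-1)+\dim S/I_\Delta$. Only $F\cap\supp_{-}\mathbf{a}$ is ever charged to the dimension.

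Your fallback induction on $|F|$ does not close either. The third term $S/(J+x_j)$ of your sequence is a colon of a symbolic power of the Stanley--Reisner ideal of the deletion $\Delta\setminus j$, whose dimension need not be smaller than $\dim\Delta$. So the $+1$ from the exact sequence is not absorbed, and the base case $F=\emptyset$ is itself the end-degree theorem.
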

\begin{proof}
Fix $k\geq1$ and $F\subset[n]$. We may assume that $S\neq I_{\Delta}^{(k)}:{\bf x}_{F}$. Now, there exist $i\geq0$ and ${\bf a}=(a_{1},\ldots, a_{n})\in\mathbb{Z}^{n}$ such that $H_{\mathfrak{m}}^{i}(S/(I_{\Delta}^{(k)}:{\bf x}_{F}))_{\bf a}\neq0$ and $\reg S/(I_{\Delta}^{(k)}:{\bf x}_{F})=|{\bf a}|+i$, where $|{\bf a}|=a_{1}+\cdots+a_{n}$. Set $\supp_{-}{\bf a}=\{i\,\,: a_{i}<0\}$. Define a vector ${\bf a^{\prime}}\in\mathbb{Z}^{n}$ as $a_{i}^{\prime}=-1$ for each $i\in\supp_{-}{\bf a}$ and $a_{i}^{\prime}=a_{i}$ otherwise. Then we see that $\Delta_{\bf a}(I_{\Delta}^{(k)}:{\bf x}_{F})=\Delta_{\bf a^{\prime}}(I_{\Delta}^{(k)}:{\bf x}_{F})$. Also, from Takayama's formula (\cite[Theorem 1]{t2005}), we obtain $$H_{\mathfrak{m}}^{i}(S/(I_{\Delta}^{(k)}:{\bf x}_{F}))_{\bf a^{\prime}}\simeq\widetilde{H}^{i-|\supp_{-}{\bf a}|-1}(\Delta_{\bf a^{\prime}}(I_{\Delta}^{(k)}:{\bf x}_{F});K)\simeq H_{\mathfrak{m}}^{i}(S/(I_{\Delta}^{(k)}:{\bf x}_{F}))_{\bf a}\neq0,$$which implies that $|{\bf a^{\prime}}|+i\leq\reg S/(I_{\Delta}^{(k)}:{\bf x}_{F})=|{\bf a}|+i$, and hence, we get $|{\bf a^{\prime}}|=|{\bf a}|$. Therefore, we may assume that $a_{i}=-1$ if $i\in\supp_{-}{\bf a}$. Notice that $\supp_{-}{\bf a^{\prime}}\neq[n]$. Set $U=[n]\setminus\supp_{-}{\bf a}$. Define a vector ${\bf b}\in\mathbb{N}^{U}$ as $b_{i}=a_{i}+1$ if $i\in F\setminus\supp_{-}{\bf a}$ and $b_{i}=a_{i}$ otherwise. By definition, $H\in\Delta_{\bf a}(I_{\Delta}^{(k)}:{\bf x}_{F})$ if and only if ${\bf x}^{\bf a}\notin (I_{\Delta}^{(k)}:{\bf x}_{F})S_{(\supp_{-}{\bf a})\cup H}$. Also, the latter one is equivalent to ${\bf x}^{\bf a}{\bf x}_{F}\notin I_{\Delta}^{(k)}S_{(\supp_{-}{\bf a})\cup H}$ since $(I_{\Delta}^{(k)}:{\bf x}_{F})S_{(\supp_{-}{\bf a})\cup H}=I_{\Delta}^{(k)}S_{(\supp_{-}{\bf a)}\cup H}:{\bf x}_{F}$. By the definition of ${\bf b}$, we see that ${\bf x}^{\bf a}{\bf x}_{F}\notin I_{\Delta}^{(k)}S_{(\supp_{-}{\bf a})\cup H}$ is equivalent to ${\bf x}^{\bf b}\notin I_{\Delta}^{(k)}S_{(\supp_{-}{\bf a})\cup H}$. From the proof in \cite[Theorem 3.2]{kmt2025}, we have $\Delta_{\bf c}(I_{\Delta}^{(k)})=\Delta_{\bf c^{+}}(I_{\link_{\Delta}\supp_{-}{\bf c}}^{(k)})$ for any ${\bf c}\in\mathbb{Z}^{n}$, and hence, we see that ${\bf x}^{\bf b}\notin I_{\Delta}^{(k)}S_{(\supp_{-}{\bf a})\cup H}$ is equivalent to ${\bf x}^{\bf b}\notin I_{\link_{\Delta}{\supp_{-}{\bf a}}}^{(k)}R_{H}$, where $R=K[x_{i}\,\,: i\in U]$ and $R_{H}=R[x_{i}^{-1}\,\,: i\in H]$. Hence, we get $\Delta_{\bf a}(I_{\Delta}^{(k)}:{\bf x}_{F})=\Delta_{\bf b}(I_{\link_{\Delta}\supp_{-}{\bf a}}^{(k)})$. 
From Takayama's formula (\cite[Theorem 1]{t2005}), we obtain that 
$$0\neq H_{\mathfrak{m}}^{i}(S/(I_{\Delta}^{(k)}:{\bf x}_{F}))_{\bf a}\simeq\widetilde{H}^{i-|{\supp_{-}{\bf a}}|-1}(\Delta_{\bf b}(I_{\link_{\Delta}\supp_{-}{\bf a}}^{(k)});K)\simeq H_{\mathfrak{n}}^{i-|\supp_{-}{\bf a|}}(R/I_{\link_{\Delta}\supp_{-}{\bf a}}^{(k)}))_{\bf b},$$where $\mathfrak{n}=(x_{i}\,\,: i\in U)$. 
Set $p=i-|\supp_{-}{\bf a}|$. Since $H_{\mathfrak{n}}^{i-|\supp_{-}{\bf a|}}(R/I_{\link_{\Delta}\supp_{-}{\bf a}}^{(k)}))_{\bf b}\neq0$ and \cite[Lemma 2.1]{ht2023} and \cite[Theorem 2.2]{ht2023}, we obtain that $$|{\bf b}|\leq \max\{j\,\,:H_{\mathfrak{n}}^{p}(R/I_{\link_{\Delta}\supp_{-}{\bf a}}^{(k)})_{j}\neq0\}\leq\delta(I_{\link_{\Delta}\supp_{-}{\bf a}})(k-1)\leq\delta(I_{\Delta})(k-1).$$ 
Since $\dim R/I_{\link_{\Delta}\supp_{-}{\bf a}}\leq\dim S/I_{\Delta}-|\supp_{-}{\bf a}|$ and $|{\bf a}|=\sum_{i\in U}a_{i}-|\supp_{-}{\bf a}|$, we obtain that 
\begin{align*}
\reg S/(I_{\Delta}^{(k)}:{\bf x}_{F})+|F|&=|{\bf a}|+i+|F|\\
&=\sum_{i\in U}a_{i}+p+|F|\\
&=|{\bf b}|+p+|F\cap\supp_{-}{\bf a}|\\
&\leq |{\bf b}|+(\dim R/I_{\link_{\Delta}\supp_{-}{\bf a}}^{(k)}+|\supp_{-}{\bf a}|) \\
&\leq\delta(I_{\Delta})(k-1)+\dim S/I_{\Delta},
\end{align*}
which completes the proof. 
\end{proof}

We are now ready to prove Theorem \ref{whisker}. 

\vspace{0.5cm}
\noindent{\it Proof of Theorem }\ref{whisker}. 
From \cite[Corollary 5.4 (5)]{js2021}, it is known that $\reg I(W(G))^{k}=2k+\im(W(G))-1$, and hence, we prove that $\reg I(W(G))^{(k)}=2k+\im(W(G))-1$. Moreover, from \cite[Theorem 4.6]{ghrs2020}, we have $\reg I(W(G))^{(k)}\geq2k+\im(W(G))-1$. Notice that $\im(W(G))=\alpha(G)$ (see, for example \cite[Corollary 4.4]{mpt2025}) and the set of minimal vertex covers of $W(G)$ is $$\Min(W(G))=\{\{x_{i}\,\,: i\notin A\}\cup\{y_{i}\,\,: i\in A\}\,\,: A\in\Delta(G)\}.$$We claim that $\reg T/(I(W(G))^{(k)}+Q_{j})\leq2(k-1)+\alpha(G)$ for all $k\geq1$ and $0\leq j\leq m$. We prove induction on $k\geq1$. Suppose that $k=1$. Since $x_{i}y_{i}$ is a generator of $I(W(G))$, we have $Q_{j}\subset I(W(G))$. Hence, we obtain $T/(I(W(G))+Q_{j})=T/I(W(G))$, and thus, we see that $\reg T/(I(W(G))+Q_{j})=\alpha(G)=2(1-1)+\alpha(G)$. Now, let $k>1$ and assume that $\reg T/(I(W(G))^{(k-1)}+Q_{j})\leq2(k-2)+\alpha(G)$ holds for all $j$. We first prove for $j=m$. By Lemma \ref{lemma1}, we have $I(W(G))^{(k)}+Q_{m}=I(G)^{(k)}T+Q_{m}$. Hence, from Lemma \ref{lemma2}, we obtain that 
$$\reg T/(I(W(G))^{(k)}+Q_{m})=\max_{F\subset[m]}\{\reg S/(I(G)^{(k)}:{\bf x}_{F})+|F|\}.$$
Also, from Lemma \ref{lemma3} and \cite[Example 4.4]{dhnt2021}, we get $$\reg S/(I(G)^{(k)}:{\bf x}_{F})+|F|\leq\delta(I(G))(k-1)+\alpha(G)=2(k-1)+\alpha(G).$$
Therefore, we obtain that 
$$\reg T/(I(W(G))^{(k)}+Q_{m})\leq 2(k-1)+\alpha(G).$$
We now prove the assertion for $j-1$, assuming that it holds for $j$, where $1\leq j\leq m$. 
Since $Q_{j-1}:x_{j}y_{j}=Q_{j-1}$ and the fact that $I(W(G))^{(k)}\,\,: x_{j}y_{j}=I(W(G))^{(k-1)}$ given in the proof in \cite[Theorem 5.2]{kty2018}, we obtain 
$(I(W(G))^{(k)}+Q_{j-1}):x_{j}y_{j}=I(W(G))^{(k-1)}+Q_{j-1}$. Hence, we get a short exact sequence 
$$0\rightarrow T/(I(W(G))^{(k-1)}+Q_{j-1})(-2)\rightarrow T/(I(W(G))^{(k)}+Q_{j-1})\rightarrow T/(I(W(G))^{(k)}+Q_{j})\rightarrow0,$$
which implies that $$\reg T/(I(W(G))^{(k)}+Q_{j-1})\leq\max\{\reg T/(I(W(G))^{(k-1)}+Q_{j-1})+2, \reg T/(I(W(G))^{(k)}+Q_{j})\}.$$
By the induction hypothesis on $k$ and the descending induction hypothesis on $j$, we have $\reg T/(I(W(G))^{(k-1)}+Q_{j-1})+2\leq2(k-1)+\alpha(G)$ and $\reg T/(I(W(G))^{(k)}+Q_{j})\leq 2(k-1)+\alpha(G)$. Therefore, we obtain $\reg T/(I(W(G))^{(k)}+Q_{j})\leq 2(k-1)+\alpha(G)$ for all $k\geq1$ and $0\leq j\leq m$. In particular, taking $j=0$, we have $\reg I(W(G))^{(k)}\leq 2(k-1)+\alpha(G)+1$, as required. 
$\hspace{14.4cm}\square$

In what follows, we prove that Minh's conjecture holds for simplicial graphs as the main theorem of this section. Let us recall the definition of simplicial graphs. Let $G$ be a graph. A vertex $v\in V(G)$ is called {\it simplicial} if $N_{G}[v]$ is a clique in $G$. $G$ is called {\it simplicial} if every vertex of $G$ is contained in the closed neighborhood of some simplicial vertex. To prove the main theorem in this section, we first prove that Minh's conjecture holds for multi-whisker graphs, which was introduced by Pournaki, Terai, and the author in \cite{mpt2025} as a generalization of whisker graphs using Theorem \ref{whisker} and \cite{bh2014}. 

\begin{defi}[\cite{bh2014}]
Let $I=({\bf x}^{\bf a_{1}},\ldots,{\bf x}^{\bf a_{m}})\subset S$ be a monomial ideal and $(i_{1},\ldots, i_{n})$ an $n$-tuple with positive integer entries. Let $S^{*}=K[x_{j,\ell}\,\,:j\in[n], \ell\in[i_{j}]]$ be a polynomial ring over $K$ and $P_{j}=(x_{j,1},\ldots, x_{j,i_{j}})$ be the monomial prime ideal for $1\leq j\leq n$. Then the {\it expansion} of $I$ with respect to $(i_{1},\ldots, i_{n})$ defined as the monomial ideal 
$$I^{*}=\sum_{1\leq i\leq m}P_{1}^{a_{i}(1)}\cdots P_{n}^{a_{i}(n)}\subset S^{*},$$where ${\bf a}_{i}=(a_{i}(1),\ldots, a_{i}(n))$.
\end{defi}

It is known the following results. 

\begin{lemma}[\cite{bh2014} Lemma 1.1(iii), Corollary 1.4, Theorem 4.2]\label{expansion lemma}
Let $I$ be monomial ideals. Then we have 
\begin{enumerate}
\item $(I^{k})^{*}=(I^{*})^{k}$ for all $k\geq1$. 
\item $(I^{(k)})^{*}=(I^{*})^{(k)}$ for all $k\geq1$. 
\item $\reg I=\reg I^{*}$
\end{enumerate}
\end{lemma}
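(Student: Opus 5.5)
Lemma \ref{expansion lemma} is quoted directly from \cite{bh2014}, so in the paper it can simply be cited. Still, here is how I would verify the three items using only the definition of expansion. I would first record the basic fact that $\ast$ is compatible with the lattice operations on monomial ideals. Namely, $(I+J)^{*}=I^{*}+J^{*}$ and $(I\cap J)^{*}=I^{*}\cap J^{*}$. Moreover, the expansion of a monomial ${\bf x}^{\bf a}$ is the product ideal $P_{1}^{a(1)}\cdots P_{n}^{a(n)}$. The last observation also shows that $I^{*}$ does not depend on the chosen generating set. For the intersection statement, use the description of monomials in $I^{*}$. A monomial $w\in S^{*}$ lies in $I^{*}$ if and only if its ``collapse'' ${\bf x}^{\pi(w)}$ lies in $I$, where $\pi(w)_{j}$ is the total degree of $w$ in the variables $x_{j,1},\ldots,x_{j,i_{j}}$. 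This holds because $P_{j}^{a}$ consists exactly of the monomials of $x_j$-block degree at least $a$. With this characterization, sums and intersections are immediate.

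For (1), I would use that $\pi$ is multiplicative. The product of the expansions of ${\bf x}^{\bf a}$ and ${\bf x}^{\bf b}$ is $\prod_j P_j^{a(j)+b(j)}$, which is the expansion of ${\bf x}^{{\bf a}+{\bf b}}$. Taking sums over generators gives $(IJ)^{*}=I^{*}J^{*}$, and hence $(I^{k})^{*}=(I^{*})^{k}$. For (2), the reduction is to the primary decomposition. For a monomial prime $P=(x_{j}: j\in A)$, the expansion $P^{*}$ is the monomial prime $\sum_{j\in A}P_{j}$. By (1), $(P^{k})^{*}=(P^{*})^{k}$. Now take $I$ squarefree, which is the case relevant here. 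By intersection compatibility, $(I^{(k)})^{*}=\bigcap (P^{*})^{k}$ and $I^{*}=\bigcap P^{*}$. The primes $P^{*}$ remain pairwise incomparable, so they are exactly the minimal primes of $I^{*}$. Since $I^{*}$ is again squarefree, $(I^{*})^{(k)}=\bigcap(P^{*})^{k}$, and (2) follows. For general monomial ideals, one would instead expand an irredundant primary decomposition. This uses that expansions of $P$-primary monomial ideals are $P^{*}$-primary.

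For (3), which I expect to be the main obstacle, I would show that the multigraded Betti numbers of $I^{*}$ are governed by those of $I$ with the same degree behaviour. My first attempt would be to realize $I^{*}$ as obtained from $I$ by a sequence of elementary steps, each replacing a variable $x_j$ by a prime $(x_j,x_j')$ in a new variable. I would then check that each step preserves regularity. For one such step, write $I'=I(x_j\mapsto (x_j,x_j'))$. A clean way to see $\reg I'=\reg I$ is via the collapse map. Consider $S'=S[x_j']$ and the linear form $x_j-x_j'$, or more robustly polarize both ideals. The polarizations of $I$ and $I'$ should be related by a regular sequence of linear forms, under which the regularity is unchanged.

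Alternatively, one can argue with Takayama's formula (Theorem \ref{thm:Takayama}) and compare degree complexes. For ${\bf a}'$ in the expanded grading, $\Delta_{{\bf a}'}(I^{*})$ should be a join or homotopy-equivalent refinement of $\Delta_{\pi({\bf a}')}(I)$ once negative supports are handled. This makes the top degrees of nonvanishing local cohomology agree. Checking the homotopy equivalence for mixed negative supports within a block is the delicate point. For that reason I would prefer the Betti number route of \cite{bh2014}, where a direct comparison of Taylor/lcm-lattice data gives $\beta_{i,{\bf b}}(I^{*})\neq0$ only in degrees $|{\bf b}|$ matching nonzero Betti degrees of $I$.
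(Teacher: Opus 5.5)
Citing \cite{bh2014} is exactly what the paper does, and your arguments for (1) and (2) are sound. The collapse characterization ``$w\in I^{*}$ iff ${\bf x}^{\pi(w)}\in I$'' gives independence of the generating set and compatibility with sums, products and intersections. The squarefree case of (2) is the only one the paper uses.

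The gap is in your sketch of (3), and the linear-form reduction cannot work. Take $I=(x^{2})\subset K[x]$. Its one-step expansion is $I'=(x,y)^{2}$, on which $x-y$ is a zero-divisor: $x(x-y)\in I'$ but $x\notin I'$. More decisively, $K[x]/I$ and $K[x,y]/I'$ have $h$-polynomials $1+t$ and $1+2t$. Polarization, adjoining variables and cutting by regular linear forms all preserve $h$-polynomials, so no chain of such operations connects $I$ with $I'$. Hence the polarized variant fails too; here $(I')^{\rm pol}$ is the edge ideal of a path on four vertices, while $I^{\rm pol}=(x_1x_2)$. Your description of the Betti-number route is also inaccurate. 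Betti degrees of $I^{*}$ need not be Betti degrees of $I$: here $\beta_{1,3}(I')=2$, while $I$ has nothing in degree $3$. What is preserved is only the diagonal $j-i$.

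Here is an argument that does work, via the expansion functor. For an $\mathbb{N}^{n}$-graded $S$-module $M$, set $(M^{*})_{\bf b}=M_{\pi({\bf b})}$, with $x_{j,\ell}$ acting as $x_j$. This functor has the following properties:
\begin{enumerate}
\item it is exact, since exactness is checked degreewise, and it preserves total degree;
\item it sends $S$ to $S^{*}$ and $I$ to $I^{*}$;
\item it sends $S(-{\bf a})$ to $\prod_{j}P_{j}^{a(j)}$, which has an $|{\bf a}|$-linear resolution, being a product of powers of maximal ideals in disjoint sets of variables.
\end{enumerate}
Apply it to the multigraded minimal free resolution $F_{\bullet}$ of $I$. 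This gives an exact complex $0\to F_{p}^{*}\to\cdots\to F_{0}^{*}\to I^{*}\to0$ with $\reg F_{\ell}^{*}=\max\{|{\bf a}|:\beta_{\ell,{\bf a}}(I)\neq0\}$. Hence $\reg I^{*}\le\max_{\ell}\{\reg F_{\ell}^{*}-\ell\}=\reg I$.

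Conversely, the monomials of $I^{*}$ lying in $K[x_{1,1},\ldots,x_{n,1}]$ form a copy of $I$. The multigraded Betti numbers of this restriction are among those of $I^{*}$, so $\reg I\le\reg I^{*}$.

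You cannot retreat to the squarefree case, where Hochster's formula gives (3) directly. Corollary \ref{multi-whisker} applies (3) to $I(W(G_{0}))^{(k)}$ and $I(W(G_{0}))^{k}$, which are not squarefree.
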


Let us recall the definition of multi-whisker graphs. Given a graph $G_0$ on the vertex set  $X_{[h]}=\{x_{1}, \ldots, x_{h}\}$ and positive integers $n_{1},\ldots, n_{h}$, the \textit{multi-whisker graph associated with }$G_{0}$ is $G=G_{0}[n_{1},\ldots, n_{h}]$ on the vertex set 
$$X_{[h]}\cup Y, \mbox{ where } Y=\{y_{1,1},\ldots, y_{1, n_{1}}\}\cup\cdots\cup\{y_{h,1},\ldots, y_{h,n_{h}}\}$$ and the edge set $$E(G)=E(G_{0})\cup\{\{x_1,y_{1,1}\},\ldots, \{x_{1},y_{1,n_{1}}\},\ldots, \{x_{h},y_{h,1}\},\ldots, \{x_{h},y_{h,n_{h}}\}\}.$$

\begin{cor}\label{multi-whisker}
With the notation introduced, we have  
$$\reg I(G)^{(k)}=\reg I(G)^{k}=2k+\im(G)-1\mbox{ for all }k\geq1.$$
\end{cor}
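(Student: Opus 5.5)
The plan is to realize $I(G)$ for the multi-whisker graph $G=G_{0}[n_{1},\ldots,n_{h}]$ as an expansion, in the sense of \cite{bh2014}, of the edge ideal of the ordinary whisker graph $W(G_{0})$. Then Lemma \ref{expansion lemma} carries the statement of Theorem \ref{whisker} over to $G$.

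Work in the ring of $W(G_{0})$ with variables $x_{1},\ldots,x_{h},y_{1},\ldots,y_{h}$. Take the expansion with respect to the tuple $(1,\ldots,1,n_{1},\ldots,n_{h})$, so each $x_{i}$ is kept and each $y_{i}$ is replaced by the prime $P_{y_{i}}=(y_{i,1},\ldots,y_{i,n_{i}})$. The generators are handled as follows.
\begin{itemize}
\item A generator $x_{i}x_{j}$ with $\{x_{i},x_{j}\}\in E(G_{0})$ expands to $x_{i}x_{j}$.
\item A generator $x_{i}y_{i}$ expands to $x_{i}P_{y_{i}}=(x_{i}y_{i,1},\ldots,x_{i}y_{i,n_{i}})$.
\end{itemize}
Summing over all generators gives $I(W(G_{0}))^{*}=I(G)$ exactly.

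Lemma \ref{expansion lemma}(1),(2) then give
\[(I(W(G_{0}))^{k})^{*}=I(G)^{k}\quad\text{and}\quad(I(W(G_{0}))^{(k)})^{*}=I(G)^{(k)}.\]
Applying Lemma \ref{expansion lemma}(3) to the monomial ideals $I(W(G_{0}))^{k}$ and $I(W(G_{0}))^{(k)}$, and then Theorem \ref{whisker}, we get
\[\reg I(G)^{(k)}=\reg I(W(G_{0}))^{(k)}=\reg I(W(G_{0}))^{k}=\reg I(G)^{k}=2k+\im(W(G_{0}))-1.\]

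The remaining combinatorial step is to show $\im(G)=\im(W(G_{0}))$. Recall that $\im(W(G_{0}))=\alpha(G_{0})$.
\begin{itemize}
\item \emph{Lower bound.} If $A$ is an independent set of $G_{0}$, then the edges $\{x_{i},y_{i,1}\}$ for $x_{i}\in A$ form an induced matching of $G$. Hence $\im(G)\geq\alpha(G_{0})$.
\item \emph{Upper bound.} Take an induced matching $M$ of $G$. Replace each edge of $M$ lying in $E(G_{0})$, say $\{x_{i},x_{j}\}$, by $\{x_{i},y_{i,1}\}$. This keeps the matching induced, since the neighbourhood of the new edge is contained in that of the old one. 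Every edge of the result has the form $\{x_{i},y_{i,\ell}\}$. Two such edges cannot share the same $x_{i}$, and their $x$-endpoints are pairwise non-adjacent because the matching is induced. So the $x$-endpoints form an independent set of $G_{0}$ of size $|M|$, giving $\im(G)\leq\alpha(G_{0})$.
\end{itemize}
This is the same argument as \cite[Corollary 4.4]{mpt2025}, which one can also simply cite.

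I expect no real obstacle here. The points needing care are only the bookkeeping that the expansion of $I(W(G_{0}))$ is precisely $I(G)$, and that Lemma \ref{expansion lemma}(3) is applied to the powers $I(W(G_{0}))^{k}$ and $I(W(G_{0}))^{(k)}$, which are themselves monomial ideals.
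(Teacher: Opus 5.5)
Your proposal is correct and follows the paper's proof: identify $I(G)=I(W(G_{0}))^{*}$, transfer both the ordinary and the symbolic powers and the regularity via Lemma \ref{expansion lemma}, then apply Theorem \ref{whisker} together with $\im(G)=\alpha(G_{0})=\im(W(G_{0}))$. The only difference is that you prove $\im(G)=\alpha(G_{0})$ directly, where the paper just asserts it (compare \cite[Corollary 4.4]{mpt2025}); your edge-replacement argument for this is sound, because replacing an edge by one whose closed neighbourhood is contained in the old edge's closed neighbourhood preserves inducedness.
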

\begin{proof}
 We can verify that $I(G)=I(W(G_{0}))^{*}$. From Theorem \ref{whisker} and Lemma \ref{expansion lemma}, we have 
$$\reg I(G)^{(k)}=\reg I(W(G_{0}))^{(k)}=\reg I(W(G_{0}))^{k}=\reg I(G)^{k}\mbox{ for all }k\geq1.$$
Moreover, we know $\im(G)=\alpha(G_{0})=\im(W(G_{0}))$, as required. 
\end{proof}

Next, we prepare the key lemma in order to prove the main theorem. As a notation, for a squarefree monomial ideal $I\subset K[x_{1},\ldots, x_{n}]$, we set 
$\mathcal{C}(I)=\{C\subset[n]\,\,: (x_{i}\,\,: i\in C)\in\Min(I)\}$ and write $I^{\vee}$ for the Alexander dual ideal of $I$. Also, let us recall the definition of lcm-lattices of monomial ideals introduced in \cite{gpw1999}. Let $I\subset S$ be a monomial ideal. The {\it lcm-lattice} of $I$, denoted by $L_{I}$, is the set consisting of 1 and all least common multiples of subsets of $\mathcal{G}(I)$, partially ordered by divisibility. 

\begin{lemma}\label{key}
Let $I\subset K[x_{1},\ldots, x_{n}]$ and $J\subset K[y_{1},\ldots, y_{m}]$ be squarefree monomial ideals. Assume that there exists a map $\varphi:[n]\rightarrow[m]$ satisfies the following conditions:
\begin{enumerate}
\item $\varphi|_{C}$ is injective for every $C\in\mathcal{C}(I)$. 
\item $\mathcal{C}(J)=\{\varphi(C)\,\,: C\in\mathcal{C}(I)\}$.
\end{enumerate}
Then we have $\reg J^{(k)}\leq\reg I^{(k)}\mbox{ for all }k\geq1.$
\end{lemma}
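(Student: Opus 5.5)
The plan is to move everything to the Alexander dual side, where the map $\varphi$ acts directly on generators, and then compare projective dimensions through lcm-lattices.

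\emph{Polarize and dualize.} Since $I^{(k)}=\bigcap_{C\in\mathcal{C}(I)}P_C^k$ with $P_C=(x_i\,:\,i\in C)$, and polarization commutes with intersections of monomial ideals, we have $(I^{(k)})^{\rm pol}=\bigcap_C (P_C^k)^{\rm pol}$ in $K[x_{i,c}\,:\,i\in[n],\,c\in[k]]$. I would first compute the Alexander dual of each $(P_C^k)^{\rm pol}$. This ideal is generated by $\prod_{i\in C}x_{i,1}\cdots x_{i,b_i}$ with $\sum b_i=k$. Its minimal primes should be $(x_{i,c_i}\,:\,i\in C)$ with $\sum_{i\in C}(c_i-1)=k-1$. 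Hence
\[
\bigl((I^{(k)})^{\rm pol}\bigr)^{\vee}=\Bigl(\prod_{i\in C}x_{i,c_i}\,:\,C\in\mathcal{C}(I),\ c_i\geq 1,\ \textstyle\sum_{i\in C}c_i=k-1+|C|\Bigr)=:I'_k,
\]
and $J'_k$ is defined in the same way from $\mathcal{C}(J)$. By Terai's theorem and the invariance of regularity under polarization, $\reg I^{(k)}=\pd I'_k+1$ and $\reg J^{(k)}=\pd J'_k+1$. So it suffices to show $\pd J'_k\leq\pd I'_k$.

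\emph{Transport along $\varphi$.} Define the ring map $x_{i,c}\mapsto y_{\varphi(i),c}$. Because $\varphi|_C$ is injective (condition (1)), the generator $\prod_{i\in C}x_{i,c_i}$ maps to a squarefree generator $\prod_{j\in\varphi(C)}y_{j,c'_j}$ of the correct type for $\varphi(C)$, with the same sum. Since $\varphi|_C$ is a bijection onto $\varphi(C)$, every admissible labelling on $\varphi(C)$ arises in this way. Condition (2) then gives that the images of $\mathcal{G}(I'_k)$ are exactly $\mathcal{G}(J'_k)$; minimality is preserved because both generating sets consist of the minimal primes of the respective polarized ideals. Next I check that this assignment extends to a well-defined map $f\colon L_{I'_k}\to L_{J'_k}$ with $f(\lcm(A))=\lcm(\text{images of }A)$. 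The monomial substitution sends $\lcm(A)$ to a monomial with the same support-image, and the lcm of the images is its radical. This gives a surjective, join-preserving map of lcm-lattices that is onto the atoms, though possibly not injective on them, since distinct $C$ can share an image.

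\emph{Compare resolutions.} Take a minimal free resolution $\mathcal{F}$ of $I'_k$ and view it as a complex labelled by $L_{I'_k}$, as in Gasharov--Peeva--Welker. I would relabel each basis element by $f$ of its label and check, via the acyclicity criterion for labelled complexes, that the result is a (possibly non-minimal) free resolution of $J'_k$. This requires that for each $m\in L_{J'_k}$ the subcomplex of elements with label $\leq m$ is acyclic. Because $f$ is join-preserving, $f^{-1}(\{\leq m\})$ is a down-set with a unique maximum $\bigvee f^{-1}(\{\leq m\})$. The subcomplex is therefore the subcomplex of $\mathcal{F}$ below a single element of $L_{I'_k}$, which is acyclic since $\mathcal{F}$ is a resolution. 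This yields $\pd J'_k\leq\pd I'_k$, and hence $\reg J^{(k)}\leq\reg I^{(k)}$.

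The main obstacle is this last step. When $f$ is not injective on atoms, the standard Gasharov--Peeva--Welker statement does not apply verbatim. Moreover, the relabelled resolution in homological degree $0$ has several basis elements with the same label, so I must argue carefully that the labelled-complex criterion still gives a resolution of $J'_k$, which is enough for the projective dimension bound. If this fails, I would fall back on Mapes' version of the lcm-lattice comparison for surjective join-preserving maps.
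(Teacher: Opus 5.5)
Your route is the paper's: polarize $I^{(k)}$, take the Alexander dual, push generators forward along $(i,c)\mapsto(\varphi(i),c)$ to get a surjective join-preserving map of lcm-lattices, and finish with Terai's theorem. The one structural difference is the last step. The paper cites \cite[Theorem 4.9]{ikmf2017}; you instead relabel a Gasharov--Peeva--Welker labelled minimal resolution. That relabelling is a valid self-contained replacement, and no appeal to Mapes is needed. However, the first step contains a concrete error.

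\textbf{The minimal primes of $(P_C^k)^{\rm pol}$ are misdescribed.} A prime $(x_{i,c_i} : i\in C)$ contains every generator $\prod_{i\in C}x_{i,1}\cdots x_{i,b_i}$, where $b_i\ge 0$ and $\sum b_i=k$, iff no such $b$ has $b_i\le c_i-1$ for all $i$. That is, iff $\sum_{i\in C}(c_i-1)\le k-1$. These primes are pairwise incomparable, since each has exactly one variable for each $i\in C$, so all of them are minimal. Your condition $\sum_{i\in C}(c_i-1)=k-1$ keeps only the top layer, and then $\reg I^{(k)}=\pd I'_k+1$ fails. For example, take $I=(x_1x_2)$ and $k=2$. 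Then $(I^{(2)})^{\rm pol}=(x_{1,1}x_{1,2}x_{2,1}x_{2,2})$, whose dual is $(x_{1,1},x_{1,2},x_{2,1},x_{2,2})$, with $\pd+1=4=\reg I^{(2)}$. Your recipe gives $(x_{1,2},x_{2,2})$ and the value $2$. The fix is to replace ``$=$'' by ``$\le$''; this is the paper's description $\sum_{i\in C}a_i\le k-1$ with $a_i=c_i-1$. Your transport step then goes through unchanged, because the constraint depends only on the values $c_i$, which the bijection $\varphi|_C$ carries over.

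\textbf{A hypothesis is missing in the relabelling step.} For $m=1$, the strand of $f(\mathcal{F})$ with labels $\le 1$ equals $\mathcal{F}_{\le u_1}$, where $u_1=\bigvee f^{-1}(1)$. It must vanish, because $J'_k$ is zero in degree $\mathbf{0}$, and this requires $f^{-1}(1)=\{1\}$, which you never state. It does hold here, since a nonempty support has nonempty image under $\tilde\varphi$. It is exactly the condition $\delta^{-1}(1)=\{1\}$ that the paper checks before citing \cite[Theorem 4.9]{ikmf2017}. For $m\neq 1$, surjectivity gives $u_m\neq 1$, so every multidegree strand is exact apart from the right $H_0$. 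The repeated labels in homological degree $0$ that worried you are harmless, for the following reasons.
\begin{enumerate}
\item Since $\mathcal{F}$ resolves an ideal, the scalar coefficients of $d_1(e)$ sum to zero for each basis element $e$. Hence the augmentation $f(F_0)\to J'_k$, sending each basis element to its label, still kills the image of $f(F_1)$.
\item The augmentation is surjective, because every minimal generator of $J'_k$ occurs as a label.
\item The strandwise dimension count then forces $H_0(f(\mathcal{F}))\cong J'_k$.
\end{enumerate}
With these two repairs your argument gives $\pd J'_k\le \pd I'_k$, and hence the lemma.
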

\begin{proof}
Fix $k\geq1$ and set $S=K[x_{1},\ldots, x_{n}]$, $T=K[y_{1},\ldots, y_{m}]$, and set $P_{C}=(x_{i}\,\,: i\in C)$ for $C\in\mathcal{C}(I)$. Notice that $I^{(k)}=\bigcap_{C\in\mathcal{C}(I)}P_{C}^{k}$. Write $S^{\rm pol}=K[x_{i,r}\,\,: i\in[n], r\in[k]]$ and $T^{\rm pol}=K[y_{j,r}\,\,: j\in[m],r\in[k]]$. From \cite[Proposition 2.3]{f2006}, we know $(I^{(k)})^{\rm pol}=\bigcap_{C\in\mathcal{C}(I)}(P_{C}^{k})^{\rm pol}$. Moreover, we see that 
$$(I^{(k)})^{\rm pol}=\bigcap_{C\in\mathcal{C}(I)}\bigcap_{\substack{1\leq c_{i}\leq k\\ \sum_{i\in C}c_{i}\leq k+|C|-1}}(x_{i,c_{i}}\,\,: i\in C),$$
which implies that 
$$\mathcal{G}(((I^{(k)})^{\rm pol})^{\vee})=\left\{\prod_{i\in C}x_{i,a_{i}+1}\,\,: C\in\mathcal{C}(I), a_{i}\geq0,\sum_{i\in C}a_{i}\leq k-1\right\}.$$
Define a map $\tilde{\varphi}:[n]\times[k]\rightarrow[m]\times[k]$ by $\tilde{\varphi}(i,r)=(\varphi(i), r)$ for each $(i,r)\in[n]\times[k]$. For a squarefree monomial $u\in S^{\rm pol}$, we set $\supp(u)=\{(i,r)\,\,: x_{i,r}\mbox{ divides }u\}$ and $\Phi(u)=\prod_{(j,r)\in\tilde{\varphi}(\supp(u))}y_{j,r}$. Let $L_{((I^{(k)})^{\rm pol})^{\vee}}$ and $L_{((J^{(k)})^{\rm pol})^{\vee}}$ denote the lcm-lattices of $((I^{(k)})^{\rm pol})^{\vee}$ and $((J^{(k)})^{\rm pol})^{\vee}$, respectively. 
Define a map $\delta:L_{((I^{(k)})^{\rm pol})^{\vee}}\rightarrow L_{((J^{(k)})^{\rm pol})^{\vee}}$ by $\delta(u)=\Phi(u)$. First we verify that $\delta$ is well-defined. Fix $u\in L_{((I^{(k)})^{\rm pol})^{\vee}}$. By definition, there exist squarefree monomials $u_{1},\ldots,u_{s}$ in $\mathcal{G}(((I^{(k)})^{\rm pol})^{\vee})$ such that $u=\lcm(u_{1},\ldots, u_{s})$. Since $u_{i}\in \mathcal{G}(((I^{(k)})^{\rm pol})^{\vee})$, we can write $u_{\ell}=\prod_{i\in C}x_{i,a_{i}+1}$ for some $C\in\mathcal{C}(I)$ and $a_{i}\geq0$ such that $\sum_{i\in C}a_{i}\leq k-1$. Since $\supp(u_{\ell})=\{(i,a_{i}+1)\,\,: i\in C\}$, we obtain $\tilde{\varphi}(\supp(u_{\ell}))=\{(\varphi(i), a_{i}+1)\,\,: i\in C\}$. By the assumption that $\varphi|_{C}$ is injective, we see that $\Phi(u_{\ell})=\prod_{i\in C}y_{\varphi(i),a_{i}+1}$. Also, by the assumption that $\mathcal{C}(J)=\{\varphi(C)\,\,: C\in\mathcal{C}(I)\}$, we get $\varphi(C)\in\mathcal{C}(J)$. Moreover, since $\varphi|_{C}:C\rightarrow\varphi(C)$ is surjective, for any $j\in\varphi(C)$, there exits the unique $i\in C$ such that $j=\varphi(i)$, and we set $b_{j}=a_{i}$. Then we have $\sum_{j\in\varphi(C)}b_{j}=\sum_{i\in C}a_{i}\leq k-1$ and $\Phi(u_{\ell})=\prod_{j\in\varphi(C)}y_{j, b_{j}+1}$, which imply that $\Phi(u_{\ell})\in\mathcal{G}(((J^{(k)})^{\rm pol})^{\vee})$. 
Notice that we have $\supp(\lcm(u_{1},\ldots, u_{s}))=\supp(u_{1})\cup\cdots\cup\supp(u_{s})$. Hence, we see that 
\begin{align*}
\supp(\delta(u))&=\supp(\Phi(u)) \\
&=\supp\bigl(\Phi(\lcm(u_{1},\ldots,u_{s}))\bigr) \\
&=\tilde{\varphi}\bigl(
  \supp(u_{1})\cup\cdots\cup\supp(u_{s})
  \bigr) \\
&=\tilde{\varphi}\bigl(\supp(u_{1})\bigr)
  \cup\cdots\cup
  \tilde{\varphi}\bigl(\supp(u_{s})\bigr) \\
&=\supp\bigl(
  \lcm(\Phi(u_{1}),\ldots,\Phi(u_{s}))
  \bigr),
\end{align*}
which implies that $\delta(u)=\lcm(\Phi(u_{1}),\ldots,\Phi(u_{s}))$ since both are squarefree monomials. Hence, we obtain $\delta(u)\in L_{((J^{(k)})^{\rm pol})^{\vee}}$. 
Next, we prove that $\delta$ is a join-preserving map. Fix $u, v\in L_{((I^{(k)})^{\rm pol})^{\vee}}$. Then we have 
$$\delta(u\vee v)=\Phi(\lcm(u,v))=\lcm(\Phi(u), \Phi(v))=\lcm(\delta(u),\delta(v))=\delta(u)\vee\delta(v),$$which implies that $\delta$ is a join-preserving map. Finally, we prove that $\delta$ is surjective and $\delta^{-1}(1)=\{1\}$. Fix $v\in L_{((J^{(k)})^{\rm pol})^{\vee}}$. Then there exist squarefree monomial $v_{1},\ldots, v_{t}\in\mathcal{G}(((J^{(k)})^{\rm pol})^{\vee})$ such that $v=\lcm(v_{1},\ldots, v_{t})$. Fix $1\leq\ell\leq t$. Then we can write $v_{\ell}=\prod_{j\in D_{\ell}}y_{j,b_{\ell,j}+1}$ for some $D_{\ell}\in\mathcal{C}(J)$ such that $\sum_{j\in D_{\ell}}b_{\ell,j}\leq k-1$. By the assumption that $\mathcal{C}(J)=\{\varphi(C)\,\,: C\in\mathcal{C}(I)\}$, we can take $C_{\ell}\in\mathcal{C}(I)$ such that $\varphi(C_{\ell})=D_{\ell}$. Now, we have $\varphi|_{C_{\ell}}$ is bijective. For every $i\in C_{\ell}$, set $a_{\ell,i}=b_{\ell,\varphi(i)}$ and $u_{\ell}=\prod_{i\in C_{\ell}}x_{i, a_{\ell,i}+1}$. Then since $\sum_{i\in C_{\ell}}a_{\ell,i}=\sum_{j\in D_{\ell}}b_{\ell,j}\leq k-1$, which implies that $u_{\ell}\in\mathcal{G}(((I^{(k)})^{\rm pol})^{\vee})$. Moreover, we have $\delta(u_{\ell})=\Phi(u_{\ell})=\prod_{i\in C_{\ell}}y_{\varphi(i),a_{\ell,i}+1}=v_{\ell}$. By setting $u=\lcm(u_{1},\ldots, u_{t})$, since $\delta$ is join-preserving, we see that $\delta(u)=\delta(\lcm(u_{1},\ldots, u_{t}))=\lcm(v_{1},\ldots, v_{t})=v$, which implies that $\delta$ is surjective. Notice that $\delta(1)=\Phi(1)=1$. Suppose that $u\neq1$. Since $\supp(u)\neq\emptyset$, we obtain that $\supp(\delta(u))=\supp(\Phi(u))=\tilde{\varphi}(\supp(u))\neq\emptyset$. 
Hence, we get $\delta^{-1}(1)=\{1\}$. Consequently, we have $\delta:L_{((I^{(k)})^{\rm pol})^{\vee}}\rightarrow L_{((J^{(k)})^{\rm pol})^{\vee}}$ is a well-defined surjective join-preserving map with $\delta^{-1}(1)=\{1\}$. 
From \cite[Theorem 4.9]{ikmf2017} and \cite[Corollary 0.3]{t1999}, we obtain that 
$$\reg J^{(k)}=\pd T^{\rm pol}/((J^{(k)})^{\rm pol})^{\vee}\leq\pd S^{\rm pol}/((I^{(k)})^{\rm pol})^{\vee}=\reg I^{(k)},$$
which completes the proof. 
\end{proof}

To apply Lemma \ref{key} for simplicial graphs, we give a relationship between a simplicial graph and a certain multi-whisker graph. To this end, we prepare the following lemma.

\begin{lemma}\label{structure of simplicial graphs}
Let $G$ be a graph. Then $G$ is a simplicial graph if and only if there exist a graph $G_{0}$ and a family of cliques $\mathcal{Q}=(Q_{1},\ldots, Q_{r})$ of $G_{0}$ with $V(G_{0})=\bigcup_{1\leq i\leq r}Q_{i}$ and new vertices $w_{1},\ldots, w_{r}$ such that $V(G)=V(G_{0})\cup\{w_{1},\ldots, w_{r}\}$ and 
$$E(G)=E(G_{0})\cup\left(\bigcup_{1\leq i\leq r}\{\{x,w_{i}\}\,\,: x\in Q_{i}\}\right).$$
\end{lemma}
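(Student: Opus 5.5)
We prove the two implications separately. Both rest on one observation: a vertex $w$ is simplicial in $G$ exactly when $N_G[w]$ is a clique, and then $Q=N_G(w)$ is a clique of $G-w$.

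For the ``only if'' direction, assume $G$ is simplicial. Choose a set $W=\{w_{1},\ldots,w_{r}\}$ of simplicial vertices that is \emph{independent} and such that every vertex of $G$ lies in $\bigcup_{i}N_G[w_i]$. Such a set exists for the following reason. Two adjacent simplicial vertices $w,w'$ have the same closed neighborhood: $N_G[w]$ is a clique containing $w'$, so $N_G[w]\subset N_G[w']$, and symmetrically. The relation ``$w=w'$ or $w\sim w'$'' on simplicial vertices is therefore an equivalence relation whose classes are cliques sharing a common closed neighborhood. Picking one representative from each class gives an independent set of simplicial vertices whose closed neighborhoods still cover $V(G)$.

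Now set $G_{0}=G-W$ and $Q_{i}=N_G(w_i)$. Each $Q_i$ is a clique of $G_{0}$, since $N_G(w_i)\cap W=\emptyset$ by independence. Moreover $V(G_{0})=\bigcup_i Q_i$: a vertex $x\notin W$ lies in some $N_G[w_i]$ with $x\neq w_i$, so $x\in Q_i$. The edges of $G$ are of two kinds:
\begin{itemize}
\item edges inside $V(G_{0})$, which are exactly $E(G_{0})$;
\item edges meeting $W$, each of the form $\{x,w_i\}$ with $x\in Q_i$, since $W$ has no internal edges.
\end{itemize}
This is the required description. One degenerate case needs attention: if $Q_i=\emptyset$, that is, $w_i$ is an isolated vertex. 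Then $V(G_0)$ should not have to be covered by $Q_i$, and nothing fails, but I would check the convention that the cliques may be empty. Alternatively, assume as implicitly elsewhere that $G$ has no isolated vertices.

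For the ``if'' direction, suppose $G$ is built from $G_{0}$, cliques $Q_1,\ldots,Q_r$, and new vertices $w_1,\ldots,w_r$ as in the statement. By construction $N_G(w_i)=Q_i$, because the $w_j$ are pairwise nonadjacent. Hence $N_G[w_i]=Q_i\cup\{w_i\}$. This set is a clique: $Q_i$ is a clique in $G_0\subset G$, and $w_i$ is adjacent to all of $Q_i$. So each $w_i$ is simplicial. Every vertex $x\in V(G_0)$ lies in some $Q_i\subset N_G[w_i]$, and $w_i\in N_G[w_i]$ trivially. Therefore $G$ is simplicial.

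The only nontrivial step is choosing the independent set $W$ in the first direction. The equivalence-class argument for adjacent simplicial vertices handles it; the rest is bookkeeping.
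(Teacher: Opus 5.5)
Your proof is correct and is essentially the paper's argument. Your set $W$, one representative per class, is exactly a maximal independent set of $G|_{U}$, which is the paper's choice: $G|_{U}$ is the disjoint union of the cliques you describe. Your observation that adjacent simplicial vertices have equal closed neighborhoods is the same fact the paper uses when it passes from $x\in N_{G}(u)$ to $x\in N_{G}(w_{i})$ for a simplicial $u$ adjacent to $w_{i}$.

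The isolated-vertex case you flag is harmless. Such a vertex is simplicial and appears as some $w_{i}$ with $Q_{i}=\emptyset$, which is vacuously a clique; the paper's construction behaves the same way.
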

\begin{proof}
Suppose that $G$ is simplicial. Let $U$ denote the set of simplicial vertices and take a maximal independent set $A=\{w_{1},\ldots, w_{r}\}$ of $G|_{U}$ and set $G_{0}=G-A$ and $Q_{i}=N_{G}(w_{i})$. Since $A$ is a maximal independent set of $G|_{U}$ and $w_{i}$ is a simplicial vertex, we see that $Q_{i}\subset V(G_{0})$ and $Q_{i}=N_{G}(w_{i})$ is a clique. Fix $x\in V(G_{0})$. Assume $x$ is a simplicial vertex. Since $x\in U\setminus A$ and $A$ is a maximal independent set, we see that $\{x,w_{i}\}\in E(G)$ for some $i$, which implies that $x\in N_{G}(w_{i})=Q_{i}$. Assume that $x$ is not simplicial. Since $G$ is simplicial, there exists $u\in U$ such that $x\in N_{G}(u)$. If $u\in A$, then $x\in N_{G}(w_{i})$ for some $i$. Suppose that $u\notin A$. By the maximality of $A$, we see that $u\in N_{G}(w_{i})$ for some $i$. Since $x, w_{i}\in N_{G}(u)$ and $u$ is simplicial, we obtain that $\{x,w_{i}\}\in E(G)$, and hence, $x\in N_{G}(w_{i})=Q_{i}$. Therefore, we get $V(G_{0})=\bigcup_{1\leq i\leq r}Q_{i}$. Now, we have $E(G_{0})=\{\{x,y\}\in E(G)\,\,: x,y\notin A\}$. Fix $\{x,y\}\in E(G)$. Since $A$ is an independent set, we have $\{x,y\}\not\subset A$. If $x,y\notin A$, then we have $\{x,y\}\in E(G_{0})$. Hence, we may assume that $x\in A$ and $y\notin A$. Then, we can write $x=w_{i}$ for some $i$. Hence, $\{x,y\}=\{w_{i},y\}\in(\bigcup_{1\leq i\leq r}\{\{x,w_{i}\}\,\,: x\in Q_{i}\})$, and hence, we obtain $E(G)\subset E(G_{0})\cup\left(\bigcup_{1\leq i\leq r}\{\{x,w_{i}\}\,\,: x\in Q_{i}\}\right)$, as desired. 

Conversely, suppose that there exist a graph $G_{0}$ and a family of cliques $\mathcal{Q}=(Q_{1},\ldots, Q_{r})$ of $G_{0}$ with $V(G_{0})=\cup_{1\leq i\leq r}Q_{i}$ and new vertices $w_{1},\ldots, w_{r}$ such that $V(G)=V(G_{0})\cup\{w_{1},\ldots, w_{r}\}$ and 
$$E(G)=E(G_{0})\cup\left(\bigcup_{1\leq i\leq r}\{\{x,w_{i}\}\,\,: x\in Q_{i}\}\right).$$Notice that $N_{G}(w_{i})=Q_{i}$ for every $i$. Since $Q_{i}$ is a clique, $w_{i}$ is a simplicial vertex of $G$. 
Now, let $x\in V(G_{0})$. Since $V(G_{0})=\bigcup_{1\leq i\leq r}Q_{i}$, there exists $i$ such that $x\in Q_{i}$. Then, by assumption, $\{x, w_{i}\}\in E(G)$, and thus, $G$ is simplicial, as required. 
\end{proof}

We now prove the main theorem of this section. 

\begin{thm}\label{main1}
Let $G$ be a simplicial graph with $E(G)\neq\emptyset$. Then we have 
$$\reg I(G)^{(k)}=\reg I(G)^{k}=2k+\im(G)-1\mbox{ for all }k\geq1.$$
\end{thm}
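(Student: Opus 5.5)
Using Lemma \ref{structure of simplicial graphs}, we write $G$ as $G_{0}$ together with cliques $Q_{1},\ldots,Q_{r}$ covering $V(G_{0})$ and simplicial vertices $w_{1},\ldots,w_{r}$ with $N_{G}(w_{i})=Q_{i}$. We then compare $G$ with a multi-whisker graph $H$ via Lemma \ref{key}. To build $H$, list the vertices of $G_{0}$ as $x_{1},\ldots,x_{h}$ and let $n_{j}=|\{i: x_{j}\in Q_{i}\}|\geq1$. Set $H=G_{0}[n_{1},\ldots,n_{h}]$, whose whisker leaves are $y_{j,i}$ for each pair $(j,i)$ with $x_{j}\in Q_{i}$. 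Define $\varphi:V(H)\to V(G)$ by $\varphi(x_{j})=x_{j}$ and $\varphi(y_{j,i})=w_{i}$; that is, $\varphi$ collapses all leaves attached to clique $Q_{i}$ onto the single vertex $w_{i}$. We apply Lemma \ref{key} with $I=I(H)$ and $J=I(G)$. Granting its hypotheses, we get $\reg I(G)^{(k)}\leq\reg I(H)^{(k)}=2k+\im(H)-1$ by Corollary \ref{multi-whisker}.

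We then need the following steps.

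\textbf{Hypothesis (1).} A minimal vertex cover $C$ of $H$ never contains two leaves $y_{j,i},y_{j',i}$ for the same $i$. Since $x_{j},x_{j'}\in Q_{i}$ are adjacent in $G_{0}$, one of them lies in $C$, and then the corresponding leaf would be redundant. So $\varphi|_{C}$ is injective.

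\textbf{Hypothesis (2).} We must show that $\varphi$ maps minimal vertex covers of $H$ onto minimal vertex covers of $G$. Minimal covers of $H$ correspond to independent sets $A$ of $G_{0}$ via $C_{A}=(V(G_{0})\setminus A)\cup\{y_{j,i}: x_{j}\in A\}$. Because $Q_{i}$ is a clique, $|A\cap Q_{i}|\leq1$, so $\varphi(C_{A})=(V(G_{0})\setminus A)\cup\{w_{i}: A\cap Q_{i}\neq\emptyset\}$.

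Minimal covers of $G$ are complements of maximal independent sets $B$ of $G$. Writing $A=B\cap V(G_{0})$, maximality forces $w_{i}\in B$ exactly when $A\cap Q_{i}=\emptyset$. Hence the complement of $B$ is precisely $\varphi(C_{A})$.

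What remains is that every independent set $A$ of $G_{0}$, not only maximal ones, yields such a $B$. Set $B=A\cup\{w_{i}: A\cap Q_{i}=\emptyset\}$. This $B$ is independent and maximal, since every vertex of $G_{0}$ outside $A$ that could be added lies in some $Q_{i}$, and either $w_{i}\in B$ or $Q_{i}$ meets $A$. So condition (2) holds.

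\textbf{Matching numbers.} We need $\im(H)=\alpha(G_{0})\leq\im(G)$. Given an independent set $A$ of $G_{0}$, choose for each $x\in A$ some $i$ with $x\in Q_{i}$, and match $x$ to $w_{i}$. Distinct $x,x'\in A$ cannot share a clique, so the $w_{i}$ are distinct. The matching is induced: the $w_{i}$ are pairwise nonadjacent, $A$ is independent, and $w_{i}$ is not adjacent to $x'\in A$ because $x'\notin Q_{i}$. Thus $\reg I(G)^{(k)}\leq 2k+\im(G)-1$.

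\textbf{Lower bounds and conclusion.} We use the general lower bound $\reg I(G)^{(k)}\geq 2k+\im(G)-1$ from \cite[Theorem 4.6]{ghrs2020}, already used in the proof of Theorem \ref{whisker}. For ordinary powers, $\reg I(G)^{k}\geq 2k+\im(G)-1$ is the standard lower bound of Beyarslan--Hà--Trung. The matching upper bound for $\reg I(G)^{k}$ is the delicate point. I would first try to rerun the argument for ordinary powers: Lemma \ref{key} is stated for symbolic powers only, but its lcm-lattice argument may adapt. Alternatively, one can cite the known formula $\reg I(G)^{k}=2k+\im(G)-1$ for simplicial graphs, or derive it from the known result for chordal-type classes together with expansion (Lemma \ref{expansion lemma}). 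I expect verifying hypothesis (2) carefully and securing this ordinary-power upper bound to be the main obstacles.
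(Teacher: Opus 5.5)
Your treatment of the symbolic powers is correct and is essentially the paper's argument. You use the same decomposition from Lemma \ref{structure of simplicial graphs}, the same multi-whisker graph, the same collapsing map $y_{j,i}\mapsto w_{i}$, and the same check of the hypotheses of Lemma \ref{key}. Your explicit induced matching $\{\{x,w_{i(x)}\}: x\in A\}$ is a good way to get $\im(H)=\alpha(G_{0})\leq\im(G)$.

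The genuine gap is the upper bound $\reg I(G)^{k}\leq 2k+\im(G)-1$ for ordinary powers. It is part of the statement, and you leave it open; none of your three suggestions closes it.
\begin{itemize}
\item Lemma \ref{key} does not adapt in any evident way. Its proof rests on the explicit generators of $((I^{(k)})^{\rm pol})^{\vee}$, read off from $I^{(k)}=\bigcap_{C\in\mathcal{C}(I)}P_{C}^{k}$. The ordinary power $I^{k}$ generally has embedded components, so the Alexander dual of $(I^{k})^{\rm pol}$ has no such description in terms of $\mathcal{C}(I)$. There is then no evident way to turn $\varphi$ into a join-preserving surjection of the relevant lcm-lattices.
\item Citing a ``known formula for simplicial graphs'' presupposes the result being proved.
\item The chordal/expansion route does not reach $G$. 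Simplicial graphs need not be chordal (e.g.\ $W(C_{5})$), and expansion relates $H$ to $W(G_{0})$, not $H$ to $G$.
\end{itemize}

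The missing observation is that your symbolic result already contains what is needed. At $k=1$ it gives $\reg I(G)=\reg I(G)^{(1)}=\im(G)+1$. Simplicial graphs are vertex decomposable: a neighbour $v$ of a simplicial vertex $w$ satisfies $N[w]\subseteq N[v]$, so $v$ is a shedding vertex, and $G-v$ and $G-N[v]$ are again simplicial. For such graphs one has $\reg I(G)^{k}\leq 2k+\reg I(G)-2$. This is exactly how the paper finishes, citing \cite[Corollary 5.5]{w2011} and \cite[Theorem 5.3]{js2021}. It gives $\reg I(G)^{k}\leq 2k+\im(G)-1$, and together with the lower bound \cite[Theorem 4.5]{bht2015} the ordinary-power formula follows.
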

\begin{proof}
Fix $k\geq1$. From Lemma \ref{structure of simplicial graphs},  there exist a graph $G_{0}$ and a family of cliques $\mathcal{Q}=(Q_{1},\ldots, Q_{r})$ of $G_{0}$ with $V(G_{0})=\cup_{1\leq i\leq r}Q_{i}$ and new vertices $w_{1},\ldots, w_{r}$ such that $V(G)=V(G_{0})\cup\{w_{1},\ldots, w_{r}\}$ and $E(G)=E(G_{0})\cup(\bigcup_{1\leq i\leq r}\{\{x,w_{i}\}\,\,: x\in Q_{i}\})$. Set $V(G_{0})=X_{[h]}$. For each $i\in[h]$, we set $\Lambda_{i}=\{j\in[r]\,\,: x_{i}\in Q_{j}\}$ and $n_{i}=|\Lambda_{i}|$. Since $V(G_{0})=\cup_{1\leq j\leq r}Q_{j}$, we have $n_{i}>0$ for all $i$. Define a graph $G^{\prime}$ on the vertex set $V(G^{\prime})=X_{[h]}\cup\{y_{i,j}\,\,: i\in[h], j\in\Lambda_{i}\}$ and whose edge set is 
$$E(G^{\prime})=E(G_{0})\cup\{\{x_{i},y_{i,j}\}\,\,: i\in[h], j\in\Lambda_{i}\}.$$
Then we see that $G^{\prime}=G_{0}[n_{1},\ldots, n_{h}]$ is a multi-whisker graph. Then, we define a map $\varphi:V(G^{\prime})\rightarrow V(G)$ as $\varphi(x_{i})=x_{i}$ and $\varphi(y_{i,j})=w_{j}$ for all $i,j$. Set $C^{\prime}_{A}=(X_{[h]}\setminus A)\cup\{y_{i,j}\,\,: x_{i}\in A, j\in\Lambda_{i}\}$ and $C_{A}=(X_{[h]}\setminus A)\cup\{w_{j}\,\,: A\cap Q_{j}\neq\emptyset\}$ for each $A\in\Delta(G_{0})$. Now, we can easily show that $$\mathcal{C}(I(G^{\prime}))=\{C^{\prime}_{A}\,\,: A\in\Delta(G_{0})\}\mbox{ and }\mathcal{C}(I(G))=\{C_{A}\,\,: A\in\Delta(G_{0})\}.$$
Hence, since $\varphi(C^{\prime}_{A})=C_{A}$ for each $A\in\Delta(G_{0})$, we get $\mathcal{C}(I(G))=\{\varphi(C^{\prime})\,\,: C^{\prime}\in\mathcal{C}(I(G^{\prime}))\}$. Fix $A\in\Delta(G_{0})$ and we verify that $\varphi|_{C^{\prime}_{A}}: C^{\prime}_{A}\rightarrow C_{A}$ is injective. Suppose that $\varphi(u)=\varphi(v)$ for $u,v\in C^{\prime}_{A}$. If $u, v\in X_{[h]}$, then, by the definition of $\varphi$, we have $u=v$. Also, if either $u\in X_{[h]}$ and $v\notin X_{[h]}$ or $v\in X_{[h]}$ and $u\notin X_{[h]}$, then,  by the definition of $\varphi$, we obtain $\varphi(u)\neq\varphi(v)$, a contradiction. Hence, we may assume that $u=y_{i,j}$ and $v=y_{\ell,t}$ for some $i,j,\ell,t$. Then we have $w_{j}=\varphi(u)=\varphi(v)=w_{t}$, which implies that $j=t$. Hence, we can write $u=y_{i,j}$ and $v=y_{\ell,j}$. Also, since $u,v\in C_{A}^{\prime}$, we know $x_{i},x_{\ell}\in A$ and $j\in\Lambda_{i}\cap\Lambda_{\ell}$. Thus, if $i\neq\ell$, then, since $Q_{j}$ is a clique, we obtain $\{x_{i},x_{\ell}\}\in E(G_{0})$, which is a contradiction to the fact that $A\in\Delta(G_{0})$. Therefore, $\varphi|_{C_{A}^{\prime}}$ is injective for any $A\in\Delta(G_{0})$. From Lemma \ref{key}, we obtain that $\reg I(G)^{(k)}\leq\reg I(G^{\prime})^{(k)}$. Now, we can easily check that $\im(G)=\alpha(G_{0})=\im(G^{\prime})$. Since $G^{\prime}$ is a multi-whisker graph, from Corollary \ref{multi-whisker} and \cite[Theorem 4.6]{ghrs2020}, we get $$2k+\im(G)-1\leq\reg I(G)^{(k)}\leq\reg I(G^{\prime})^{(k)}=2k+\im(G^{\prime})-1=2k+\im(G)-1.$$
Notice that, in particular, we obtain $\reg I(G)=\im(G)+1$. Combining this and \cite[Corollary 5.5]{w2011}, \cite[Theorem 5.3]{js2021}, we have $\reg I(G)^{k}\leq 2k+\reg I(G)-2=2k+\im(G)-1$. On the other hand, from \cite[Theorem 4.5]{bht2015}, we know $2k+\im(G)-1\leq\reg I(G)^{k}$. Hence, we get $\reg I(G)^{k}=2k+\im(G)-1$, as required. 
\end{proof}

We end this section with a result on local cohomology of ordinary and symbolic powers of edge ideals. As a result, we also get an inequality for the regularity. 

\begin{lemma}\label{surj}
Let $I\subset J$ be monomial ideals of $S$. Suppose that $\sqrt{I:{\bf x}^{\bf a}}=\sqrt{J:{\bf x}^{\bf a}}$ for every ${\bf a}\in\mathbb{N}^{n}$ such that ${\bf x}^{\bf a}\notin J$. Then, for every $i$, the natural map $H_{\mathfrak{m}}^{i}(S/I)\rightarrow H_{\mathfrak{m}}^{i}(S/J)$ induced by the short exact sequence
$0\rightarrow J/I\rightarrow S/I\rightarrow S/J\rightarrow0$ is surjective. 
\end{lemma}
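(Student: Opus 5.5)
The plan is to prove surjectivity degree by degree, using Takayama's formula (Theorem~\ref{thm:Takayama}) together with its naturality. Fix $i$ and ${\bf a}\in\mathbb{Z}^{n}$. It suffices to show that the $\bf a$-graded component $H_{\mathfrak{m}}^{i}(S/I)_{\bf a}\rightarrow H_{\mathfrak{m}}^{i}(S/J)_{\bf a}$ is surjective.

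First I will make the naturality precise. Takayama's isomorphism comes from the ${\bf a}$-graded part of the Čech complex of $S/I$. In degree ${\bf a}$, the summand of $(S/I)_{{\bf x}_F}$ is $K$ if ${\bf x}^{\bf a}\notin IS_F$ (with $F\supseteq\supp_{-}{\bf a}$) and $0$ otherwise. This identifies the degree-${\bf a}$ Čech complex with the (shifted, reduced) cochain complex of $\Delta_{\bf a}(I)$. Since $I\subset J$, the condition ${\bf x}^{\bf a}\notin JS_F$ implies ${\bf x}^{\bf a}\notin IS_F$, so $\Delta_{\bf a}(J)\subseteq\Delta_{\bf a}(I)$. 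The surjection $S/I\rightarrow S/J$ induces, on degree-${\bf a}$ Čech complexes, exactly the restriction of cochains from $\Delta_{\bf a}(I)$ to the subcomplex $\Delta_{\bf a}(J)$. Hence the map in question is identified with the restriction map
$$\widetilde{H}^{i-|\supp_{-}{\bf a}|-1}(\Delta_{\bf a}(I);K)\rightarrow\widetilde{H}^{i-|\supp_{-}{\bf a}|-1}(\Delta_{\bf a}(J);K).$$
I expect this bookkeeping, i.e.\ checking that Takayama's identification is functorial for the quotient map, to be the only delicate point. It is standard from the Čech-complex proof, and I will spell it out only briefly.

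Next I reduce to a radical condition. Write ${\bf a}^{+}$ for the vector with entries $\max\{a_j,0\}$. For $F\supseteq\supp_{-}{\bf a}$, the variables $x_j$ with $j\in\supp_{-}{\bf a}$ are units in $S_F$. Hence ${\bf x}^{\bf a}\in LS_F$ if and only if ${\bf x}^{{\bf a}^{+}}{\bf x}_F^{N}\in L$ for some $N\geq0$, which holds if and only if ${\bf x}_F\in\sqrt{L:{\bf x}^{{\bf a}^{+}}}$. This applies to any monomial ideal $L$, in particular $L=I$ and $L=J$.

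Now I split into two cases.

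\emph{Case ${\bf x}^{{\bf a}^{+}}\in J$.} Then $\sqrt{J:{\bf x}^{{\bf a}^{+}}}=S$, so no $F$ qualifies and $\Delta_{\bf a}(J)$ is the void complex. All its reduced cohomology vanishes, so the target is $0$ and surjectivity is trivial.

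\emph{Case ${\bf x}^{{\bf a}^{+}}\notin J$.} Since ${\bf a}^{+}\in\mathbb{N}^{n}$, the hypothesis gives $\sqrt{I:{\bf x}^{{\bf a}^{+}}}=\sqrt{J:{\bf x}^{{\bf a}^{+}}}$. By the criterion above, the faces of $\Delta_{\bf a}(I)$ and $\Delta_{\bf a}(J)$ coincide, so $\Delta_{\bf a}(I)=\Delta_{\bf a}(J)$. The restriction map is then the identity, hence an isomorphism, and in particular surjective.

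Combining the two cases over all ${\bf a}$ yields surjectivity of $H_{\mathfrak{m}}^{i}(S/I)\rightarrow H_{\mathfrak{m}}^{i}(S/J)$ for every $i$.
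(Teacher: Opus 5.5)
Your proof is correct, but it controls the map on local cohomology differently from the paper.

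The paper proves the equivalent statement that $H_{\mathfrak{m}}^{i}(J/I)\rightarrow H_{\mathfrak{m}}^{i}(S/I)$ is injective for every $i$. It reaches the same dichotomy as you do: either $\Delta_{\bf a}(I)=\Delta_{\bf a}(J)$, or $\Delta_{\bf a}(J)$ is void. It gets there by citing \cite[Lemma 2.19]{mnptv2022} at ${\bf a}^{+}$ together with the link description $\Delta_{\bf a}(I)=\link_{\Delta_{{\bf a}^{+}}(I)}\supp_{-}{\bf a}$ from \cite{kmt2025}. It then invokes the relative Hochster--Takayama formula of \cite{hm2026}, $H_{\mathfrak{m}}^{i}(J/I)_{\bf a}\simeq\widetilde{H}^{i-|\supp_{-}{\bf a}|-1}(\Delta_{\bf a}(I),\Delta_{\bf a}(J);K)$. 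This gives $H_{\mathfrak{m}}^{i}(J/I)_{\bf a}=0$ in the first case and $H_{\mathfrak{m}}^{i}(S/J)_{\bf a}=0$ in the second, and the long exact sequence finishes. Because only vanishing statements are used, abstract isomorphisms suffice and no naturality has to be checked; the cost is importing the relative formula.

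You instead derive the dichotomy directly from the criterion that ${\bf x}^{\bf a}\in LS_{F}$ if and only if ${\bf x}_{F}\in\sqrt{L:{\bf x}^{{\bf a}^{+}}}$. You replace the relative formula by checking that the degree-${\bf a}$ \v{C}ech identification is compatible with $S/I\rightarrow S/J$. It is, since the sign normalization depends only on $F$ and ${\bf a}$, not on the ideal, so the map becomes restriction of cochains.

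Your route is more self-contained. It also shows directly that in the first case the natural map between the degree-${\bf a}$ \v{C}ech complexes of $S/I$ and $S/J$ is already an isomorphism of complexes. That is exactly why the relative cohomology vanishes in the paper's argument.
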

\begin{proof}
It suffices to prove that $H_{\mathfrak{m}}^{i}(J/I)\rightarrow H_{\mathfrak{m}}^{i}(S/I)$ is injective for all $i$. Fix ${\bf a}\in\mathbb{Z}^{n}$ and set ${\bf a}^{+}=(\max\{a_{1},0\},\ldots, \max\{a_{n},0\})\in\mathbb{N}^{n}$. We claim that $\Delta_{\bf a}(I)=\Delta_{\bf a}(J)$ or $\Delta_{\bf a}(J)=\emptyset$. First, suppose that ${\bf x}^{\bf a^{+}}\notin J$. By the assumption, we have $\sqrt{I:{\bf x}^{\bf a^{+}}}=\sqrt{J:{\bf x}^{\bf a^{+}}}$, and hence, by \cite[Lemma 2.19]{mnptv2022}, we see that $\Delta_{\bf a^{+}}(I)=\Delta_{\bf a^{+}}(J)$. As in the proof of \cite[Theorem 3.2]{kmt2025}, we know $\Delta_{\bf a}(I)=\link_{\Delta_{\bf a^{+}}(I)}\supp_{-}{\bf a}=\link_{\Delta_{\bf a^{+}}(J)}\supp_{-}{\bf a}=\Delta_{\bf a}(J)$. Also, suppose that ${\bf x}^{\bf a^{+}}\in J$. From \cite[Lemma 2.19]{mnptv2022}, we know that $\Delta_{\bf a^{+}}(J)$ is the void complex, and thus, we see that $\Delta_{\bf a}(J)=\link_{\Delta_{\bf a^{+}}(J)}\supp_{-}{\bf a}=\emptyset$. Now, for each ${\bf a}\in\mathbb{Z}^{n}$, from Takayama's formula (\cite[Theorem 1]{t2005}) and relative Hochster--Takayama formula (\cite[Theorem 3.2]{hm2026}), we obtain that 
$$H_{\mathfrak{m}}^{i}(S/I)_{\bf a}\simeq\widetilde{H}^{i-|\supp_{-}{\bf a}|-1}(\Delta_{\bf a}(I);K)\mbox{ and }H_{\mathfrak{m}}^{i}(J/I)_{\bf a}\simeq\widetilde{H}^{i-|\supp_{-}{\bf a}|-1}(\Delta_{\bf a}(I),\Delta_{\bf a}(J);K).$$We distinguish the following cases. 

{\bf Case 1.} Suppose that $\Delta_{\bf a}(I)=\Delta_{\bf a}(J)$. In this case, we know $$H_{\mathfrak{m}}^{i}(J/I)_{\bf a}\simeq\widetilde{H}^{i-|\supp_{-}{\bf a}|-1}(\Delta_{\bf a}(I),\Delta_{\bf a}(I);K)=0,$$ and hence, we see that the induced map $H_{\mathfrak{m}}^{i}(J/I)_{\bf a}\rightarrow H_{\mathfrak{m}}^{i}(S/I)_{\bf a}$ is injective for all $i$. 

{\bf Case 2.} Suppose that $\Delta_{\bf a}(J)=\emptyset$. In this case, we know $$H_{\mathfrak{m}}^{i}(S/J)_{\bf a}\simeq\widetilde{H}^{i-|\supp_{-}{\bf a}|-1}(\Delta_{\bf a}(J);K)=0,$$ and hence, we see that the induced map $H_{\mathfrak{m}}^{i}(J/I)_{\bf a}\rightarrow H_{\mathfrak{m}}^{i}(S/I)_{\bf a}$ is isomorphism for all $i$,
which completes the proof. 
\end{proof}

The {\it odd girth} of a graph is defined as the length of a shortest induced odd cycle in $G$, denoted by $\ogirth(G)$, where we set $\ogirth(G)=\infty$ if $G$ is bipartite. Then it is known that the following result, which shows that when $I(G)^{(k)}=I(G)^{k}$ holds.  

\begin{lemma}[\cite{rty2011}, Lemma 3.10]\label{char of symb=ord}
Let $G$ be a graph and $k\geq2$. Then the following conditions are equivalent:
\begin{enumerate}
\item $I(G)^{(k)}=I(G)^{k}$. 
\item $\ogirth(G)>2k-1$. 
\end{enumerate}
\end{lemma}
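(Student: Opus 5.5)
The plan is to work directly with the exponent characterization of symbolic powers and to proceed by induction on $k$.

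First, record the descriptions being compared. Every minimal prime of $I(G)$ is $P_C=(x_i : i\in C)$ for a minimal vertex cover $C$. Hence ${\bf x}^{\bf a}\in I(G)^{(k)}$ if and only if $\sum_{i\in C}a_i\geq k$ for every minimal vertex cover $C$. By contrast, ${\bf x}^{\bf a}\in I(G)^k$ if and only if ${\bf x}^{\bf a}$ is divisible by a product of $k$ edge monomials.

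The case $k=1$ is trivial, so assume $k\geq2$.

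\emph{Direction (2)$\Rightarrow$(1).} Assume every induced odd cycle of $G$ has length at least $2k+1$. Take a monomial $u={\bf x}^{\bf a}\in I(G)^{(k)}$ and aim to show $u\in I(G)^k$ by induction on $k$ (and on $\deg u$). We use the standard fact that the symbolic Rees algebra is generated by $I(G)$ together with monomials ${\bf x}_{C}t^{s}$ attached to induced odd cycles $C$ of length $2s-1$, and more generally by indecomposable elements. Instead of invoking this, I would argue as follows. Pick a vertex $i$ with $a_i>0$. If there is a neighbor $j$ with $a_j>0$ such that $u/(x_ix_j)\in I(G)^{(k-1)}$, then induction on $k$ finishes the argument. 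So we need to show that such an edge exists.

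Suppose no such edge exists. Then for every edge $\{i,j\}$ inside $\supp(u)$ there is a minimal vertex cover $C_{ij}$ containing both $i$ and $j$ with $\sum_{C_{ij}}a=k$ exactly; we call such covers tight. We then combine tight covers. If $C$ and $C'$ are tight, then $C\cap C'$ and $C\cup C'$ satisfy a submodularity relation. From this we extract either a contradiction or a walk alternating between $C$ and the complement, which closes into an odd closed walk of length at most $2k-1$ among the vertices of $\supp(u)$. The bound comes from each vertex contributing weight and the total weight being $k$. An odd closed walk of length at most $2k-1$ contains an induced odd cycle of length at most $2k-1$, contradicting (2). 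We also note that $u$ must involve an edge at all, since $I(G)^{(k)}\subset I(G)$.

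\emph{Direction (1)$\Rightarrow$(2).} Suppose $C$ is an induced odd cycle of length $2s-1\leq 2k-1$. Set $u={\bf x}_C\cdot m$, where $m$ is a product of $k-s$ edge monomials chosen away from the cycle. Simplest is to take $m=(x_ix_j)^{k-s}$ for an edge $\{i,j\}$ of $C$. Every vertex cover meets $C$ in at least $s$ vertices, and any cover meets $\{i,j\}$. Hence $\sum_{C'} a\geq s+(k-s)=k$ for every minimal vertex cover $C'$, and so $u\in I(G)^{(k)}$. On the other hand, $\deg u=2s-1+2(k-s)=2k-1<2k$, so $u\notin I(G)^k$. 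This proves (1)$\Rightarrow$(2) by contraposition.

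The main obstacle is the combinatorial step in direction (2)$\Rightarrow$(1): producing a short induced odd cycle when no edge can be peeled off. If the tight-cover argument becomes messy, the first fallback is to cite the description of the minimal generators of $I(G)^{(k)}$ via induced odd cycles, namely generators of the form $\prod{\bf x}_{C_\ell}\cdot\prod(\text{edges})$. Under (2) this description has no odd-cycle factors of size at most $2k-1$, which gives the equality directly.
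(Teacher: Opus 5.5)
Your direction (1)$\Rightarrow$(2) is correct. The monomial $u={\bf x}_C(x_ix_j)^{k-s}$ lies in $I(G)^{(k)}$, because every vertex cover meets $C$ in at least $s$ vertices and also meets $\{i,j\}$. Since $\deg u=2k-1$, it is not in $I(G)^k$. (The paper does not prove this lemma; it only cites Rinaldo--Terai--Yoshida.)

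The gap is all of (2)$\Rightarrow$(1). Your reduction to finding an edge $\{i,j\}\subset\supp({\bf a})$ with $u/(x_ix_j)\in I(G)^{(k-1)}$ is valid. So is the observation that if no such edge exists, each edge in the support lies in a tight minimal cover. After that, however, you only describe what an argument should produce. The uncrossing you invoke is not available in this form. The weight $C\mapsto\sum_{\ell\in C}a_\ell$ is modular, but $C\cap C'$ need not be a vertex cover (for a single edge $\{i,j\}$ take $C=\{i\}$, $C'=\{j\}$), so tightness is not inherited. The usual uncrossing of covers, as in K\"onig's theorem, uses a bipartition, which is exactly what is missing here. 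Nothing in your sketch then produces an odd closed walk, and the bound $2k-1$ on its length is asserted rather than derived.

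Your fallback is not sound either. The symbolic Rees algebra of an edge ideal is \emph{not} generated by edges and odd cycles in general. For $G=K_4$, every minimal vertex cover has three vertices, so $x_1x_2x_3x_4$ is a minimal generator of $I(G)^{(3)}$ of degree $4$. A product of triangles and edges of total weight at least $3$ has degree at least $5$. Moreover, on graphs satisfying (2), the description you want to quote would say that $I(G)^{(k)}$ is generated by products of $k$ edges, which is the statement being proved.

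The missing idea is to look at the support of $u$ instead of peeling off edges. Let $u={\bf x}^{\bf a}\in I(G)^{(k)}$, $W=\supp({\bf a})$ and $H=G|_{W}$.
\begin{itemize}
\item If $H$ is not bipartite, a shortest odd cycle of $H$ is chordless, since a chord would split it into two shorter cycles, one of them odd. It is therefore an induced odd cycle of $G$, and by (2) its length is $2s+1$ with $s\geq k$. It contains $k$ pairwise disjoint edges, whose product divides ${\bf x}_W$ and hence $u$. Thus $u\in I(G)^k$.
\item If $H$ is bipartite, take any minimal vertex cover $D$ of $H$. Then $D\cup([n]\setminus W)$ is a vertex cover of $G$ and contains a minimal one $C$. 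Since $a_\ell=0$ off $W$ and $C\cap W\subset D$, we get $\sum_{\ell\in D}a_\ell\geq\sum_{\ell\in C}a_\ell\geq k$. Hence $u\in I(H)^{(k)}$, and the Simis--Vasconcelos--Villarreal theorem for bipartite graphs (quoted in the introduction) gives $u\in I(H)^{k}\subset I(G)^k$.
\end{itemize}
This argument needs neither induction on $k$ nor any analysis of tight covers.
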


\begin{thm}\label{local coho}
Let $G$ be a graph with $\ogirth(G)=2t-1$. If $1\leq k\leq t$, then, for every $i$, the natural map $H_{\mathfrak{m}}^{i}(S/I(G)^{k})\rightarrow H_{\mathfrak{m}}^{i}(S/I(G)^{(k)})$ induced from $0\rightarrow I(G)^{(k)}/I(G)^{k}\rightarrow S/I(G)^{k}\rightarrow S/I(G)^{(k)}\rightarrow0$ is surjective.  
\end{thm}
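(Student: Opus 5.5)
The plan is to apply Lemma \ref{surj} with $I=I(G)^{k}$ and $J=I(G)^{(k)}$. Two cases are immediate. If $k<t$, then $2k-1<2t-1=\ogirth(G)$, so Lemma \ref{char of symb=ord} gives $I(G)^{(k)}=I(G)^{k}$ (and for $k=1$ this is trivial). In that case the natural map is the identity and is surjective. The remaining case is $k=t$, where $G$ has an induced odd cycle of length $2k-1$ but none shorter. Here the task is to verify the hypothesis of Lemma \ref{surj}: for every monomial ${\bf x}^{\bf a}\notin I(G)^{(k)}$ we need $\sqrt{I(G)^{k}:{\bf x}^{\bf a}}=\sqrt{I(G)^{(k)}:{\bf x}^{\bf a}}$.

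The inclusion $\subseteq$ is automatic because $I(G)^{k}\subseteq I(G)^{(k)}$. For $\supseteq$, both radicals are squarefree monomial ideals, so it is enough to compare squarefree monomials. Take ${\bf x}_{F}$ with $({\bf x}_{F})^{N}{\bf x}^{\bf a}\in I(G)^{(k)}$ for some $N$. Using $\sqrt{I(G)^{(k)}:{\bf x}^{\bf a}}\subseteq\sqrt{I(G)^{(k)}:{\bf x}^{{\bf a}}{\bf x}_F^{N}}$, we need to show some power of ${\bf x}_{F}$ times ${\bf x}^{\bf a}$ lies in $I(G)^{k}$. The tool is the known description of the difference at the first nontrivial power. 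When $\ogirth(G)=2k-1$, one has
$$I(G)^{(k)}=I(G)^{k}+\bigl({\bf x}_{C}\,\,: C \mbox{ an induced }(2k-1)\mbox{-cycle of }G\bigr).$$
This follows from the Sullivant/Herzog--Hibi--Trung description of $I(G)^{(k)}$ via odd cycles, since no odd cycle shorter than $2k-1$ exists. So if ${\bf x}^{\bf a}{\bf x}_F^{N}\in I(G)^{(k)}\setminus I(G)^{k}$, then it is divisible by ${\bf x}_{C}$ for some induced $(2k-1)$-cycle $C$.

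The key step is then the following. Suppose ${\bf x}^{\bf a}\notin I(G)^{(k)}$, so ${\bf x}^{\bf a}$ is not divisible by any ${\bf x}_{C}$ nor lies in $I(G)^k$. Multiplying by powers of ${\bf x}_{F}$, I would show that a higher power lands in $I(G)^{k}$. Concretely, ${\bf x}_{C}$ divides ${\bf x}^{\bf a}{\bf x}_F^{N}$ while ${\bf x}^{\bf a}$ is not divisible by ${\bf x}_C$, so $F$ contains a vertex $v$ of $C$ not in $\supp({\bf a})$. Then ${\bf x}_{C}x_{v}$ is divisible by a product of $k$ edges: pair $v$ with both of its cycle-neighbours and pair the remaining $2k-2$ vertices of the path $C-\{v\}$... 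Recounting, $C$ together with an extra $x_v$ has $2k$ vertex-slots. Covering them by $k$ edges of $C$ means pairing the doubled $v$ with its two neighbours $u,w$ and matching the path $C\setminus\{u,v,w\}$, which has $2k-4$ vertices, into $k-2$ edges. That gives $2+(k-2)=k$ edges, so ${\bf x}_{C}x_{v}\in I(G)^{k}$. Hence ${\bf x}^{\bf a}{\bf x}_F^{N+1}\in I(G)^{k}$ whenever the witness involves ${\bf x}_C$ alone. In general the witness is $m\cdot{\bf x}_C$, where $m$ may carry the rest, and the same factor $x_v$ still works since $m{\bf x}_C x_v\in I(G)^k$. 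Therefore ${\bf x}_{F}\in\sqrt{I(G)^{k}:{\bf x}^{\bf a}}$.

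The main obstacle I expect is justifying the displayed decomposition of $I(G)^{(k)}$ at $k=\ogirth$ rigorously, in particular that every minimal generator of $I(G)^{(k)}$ not in $I(G)^k$ is exactly ${\bf x}_C$ times something. I would first try to cite the generator description of symbolic powers of edge ideals in terms of odd cycles (products of edges and odd cycles of total "weight" $k$), where the weight of a $(2r+1)$-cycle is $r+1$. With all odd cycles of length at least $2k-1$, any product involving a cycle and having weight at least $k$ is either a single $(2k-1)$-cycle or already lies in $I(G)^k$. A secondary concern is that $v$ must lie outside $\supp({\bf a})$. If instead $v\in\supp({\bf a})$ for all $v\in C\cap F$, then ${\bf x}_C$ would divide ${\bf x}^{\bf a}$ times vertices outside $F$, which contradicts ${\bf x}^{\bf a}\notin I(G)^{(k)}$ only after a careful argument. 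To avoid this I would simply use $x_v$ for any $v\in C$ dividing ${\bf x}_F$; such a $v$ exists since ${\bf x}_C\nmid{\bf x}^{\bf a}$. The extra factor $x_v$ works regardless of whether $v\in\supp({\bf a})$.
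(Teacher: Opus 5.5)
Your architecture is the paper's own. You dispose of $k<t$ with Lemma \ref{char of symb=ord}. At $k=t$ you verify the hypothesis of Lemma \ref{surj} via $I(G)^{(t)}=I(G)^{t}+({\bf x}_C : C \mbox{ an induced } (2t-1)\mbox{-cycle})$. You then use $x_v{\bf x}_C\in I(G)^t$ for some $v\in V(C)\cap F$, which exists because ${\bf x}_C\nmid{\bf x}^{\bf a}$. That last step is correct, and it is the same computation the paper does with $f_C^2{\bf x}^{\bf b}$.

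\textbf{The gap} is the step you yourself flag: the justification of the displayed decomposition. The description you propose to cite says that $I(G)^{(k)}$ is generated by products of edges and odd cycles of total weight $\geq k$. That is false for general graphs. Take $G=K_4$ and $k=3$. Every minimal vertex cover has three vertices, so $x_1x_2x_3x_4\in I(G)^{(3)}$. But every product of edges and triangles of weight $\geq 3$ has degree $\geq 5$. In general the symbolic Rees algebra of an edge ideal needs further generators, for example cliques.

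So the decomposition cannot be derived from such a description. It holds only because $t$ is the \emph{first} nontrivial power, and it needs its own argument. The paper's argument is short and uses only Lemma \ref{char of symb=ord}:
\begin{enumerate}
\item Let $u$ be a minimal generator of $I(G)^{(t)}$ with $u\notin I(G)^t$, and set $W=\supp(u)$ and $H=G|_W$.
\item Then $u\in I(H)^{(t)}\setminus I(H)^{t}$. Membership in $I(H)^{(t)}$ holds because every minimal vertex cover of $H$ is the trace on $W$ of a minimal vertex cover of $G$; non-membership in $I(H)^t$ holds because $I(H)^t\subseteq I(G)^t$.
\item By Lemma \ref{char of symb=ord} applied to $H$ (note $t\geq 2$), $H$ has an induced odd cycle $C$ of length $\leq 2t-1$. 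Since $C$ is also induced in $G$ and $\ogirth(G)=2t-1$, its length is exactly $2t-1$.
\item Every vertex cover meets $V(C)$ in at least $t$ vertices, so ${\bf x}_C\in I(G)^{(t)}$. Also ${\bf x}_C\mid u$ because $V(C)\subseteq W$.
\item Minimality of $u$ forces $u={\bf x}_C$.
\end{enumerate}
With this argument in place of the citation, your proof is complete.
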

\begin{proof}
Set $I=I(G)$. For every $1\leq k<t$, from Lemma \ref{char of symb=ord}, we have $I^{(k)}=I^{k}$, and hence $I^{(k)}/I^{k}=(0)$,  and there is nothing to prove. Thus, it remains to consider the case $k=t$. We now prove that $\sqrt{I^{(t)}:{\bf x}^{\bf b}} = \sqrt{I^t:{\bf x}^{\bf b}}$ for every ${\bf b}\in\mathbb{N}^{n}$ with ${\bf x}^{\bf b}\notin I^{(t)}$. First, we claim that $$I^{(t)} = I^{t}+ ({\bf x}_C: C\text{ is an induced cycle of length }2t-1)$$ where ${\bf x}_C=\prod_{x_j\in V(C)}x_j$. Let $u$ be a minimal monomial generator of $I^{(t)}$ with $u\notin I^t$, and set $W=\supp(u)$ and $H=G|_{W}$. Now we have  $u\in I(H)^{(t)}\setminus I(H)^{t}$. Hence, from Lemma \ref{char of symb=ord}, $H$ contains an odd cycle of length at most $2t-1$. Since $H$ is an induced subgraph of $G$ and $\ogirth(G)=2t-1$, the graph $H$ contains an induced cycle $C$ of length exactly $2t-1$. Since $|V(C)\cap D|\geq t$ for every minimal vertex cover of $G$, we see that ${\bf x}_{C}\in I^{(t)}$. Also, since $V(C)\subset W$, ${\bf x}_{C}$ divides $u$, and thus, we see that $u={\bf x}_{C}$.
Therefore, we obtain 
$$I^{(t)}:{\bf x}^{\bf b}=(I^{t}:{\bf x}^{\bf b})+({\bf x}_{C}/\gcd({\bf x}_{C}, {\bf x}^{\bf b})\,\,: C\mbox{ is an induced cycle of length }2t-1).$$
Hence, it is enough to prove that ${\bf x}_C/\gcd({\bf x}_C,x^{\bf b}) \in \sqrt{I^t:{\bf x}^{\bf b}}$ for every induced cycle $C$ of length $2t-1$. Set $f_{C}={\bf x}_{C}/\gcd({\bf x}_{C},{\bf x}^{\bf b})$. Since ${\bf x}_C\in I^{(t)}$ and ${\bf x}^{\bf b}\notin I^{(t)}$, we have $f_C\neq1$. Take a vertex $v\in V(C)$ such that $x_v$ divides $f_C$. Write $C=(v_{1},\ldots,v_{2t-1},v_{1})$ with $v=v_{1}$. Then we see that $x_v{\bf x}_C = (x_{v_{1}}x_{v_{2}})(x_{v_{1}}x_{v_{2t-1}}) \prod_{j=1}^{t-2}(x_{v_{2j+1}}x_{v_{2j+2}}) \in I^t$. Notice that $f_{C}=\prod_{w\in V(C), b_{w}=0}x_{w}$. Hence, we see that $x_{v}{\bf x}_{C}=x_{v}^{2}\prod_{w\in V(C)\setminus\{v\}}x_{w}$ divides $f_{C}^{2}{\bf x}^{\bf b}$. Hence, we get $\sqrt{I^{(t)}:{\bf x}^{\bf b}} = \sqrt{I^t:{\bf x}^{\bf b}}$, and thus, from Lemma \ref{surj}, the assertion follows. 
\end{proof}

As an immediate consequence, we obtain the following partial regularity comparison between ordinary and symbolic powers.

\begin{cor}\label{ineq of reg}
Let $G$ be a graph with $\ogirth(G)=2t-1$. If $1\leq k\leq t$, then we have $$\reg I(G)^{(k)}\leq\reg I(G)^{k}.$$ 
\end{cor}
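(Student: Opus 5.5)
The plan is to read off the regularity inequality from the surjectivity in Theorem \ref{local coho}, degree by degree. Fix $1\leq k\leq t$ and write $I=I(G)$. By Theorem \ref{local coho}, for every $i$ the natural map $H_{\mathfrak{m}}^{i}(S/I^{k})\rightarrow H_{\mathfrak{m}}^{i}(S/I^{(k)})$ is surjective. It is a morphism of graded modules: it comes from the graded (indeed $\mathbb{Z}^{n}$-graded) short exact sequence $0\rightarrow I^{(k)}/I^{k}\rightarrow S/I^{k}\rightarrow S/I^{(k)}\rightarrow 0$. Hence it is surjective in each degree $j$, i.e.\ $H_{\mathfrak{m}}^{i}(S/I^{k})_{j}\rightarrow H_{\mathfrak{m}}^{i}(S/I^{(k)})_{j}$ is onto for all $i,j$.

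Consequently, whenever $H_{\mathfrak{m}}^{i}(S/I^{(k)})_{j}\neq 0$, we also have $H_{\mathfrak{m}}^{i}(S/I^{k})_{j}\neq 0$. Using the local cohomology description of regularity recalled in the preliminaries,
$$\reg S/I^{(k)}=\max\{i+j\,\,: H_{\mathfrak{m}}^{i}(S/I^{(k)})_{j}\neq0\}\leq\max\{i+j\,\,: H_{\mathfrak{m}}^{i}(S/I^{k})_{j}\neq0\}=\reg S/I^{k}.$$
Finally, for a proper nonzero homogeneous ideal $J$ we have $\reg J=\reg S/J+1$, which applies to both $I^{(k)}$ and $I^{k}$. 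This gives $\reg I^{(k)}\leq \reg I^{k}$.

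There is no real obstacle, since all the substance is in Theorem \ref{local coho}. The only point needing care is that the surjectivity is degreewise, which holds because the connecting maps in the long exact sequence of local cohomology are homogeneous of degree zero. For $k<t$ the statement is in any case trivial, since $I^{(k)}=I^{k}$ by Lemma \ref{char of symb=ord}.
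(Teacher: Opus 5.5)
Your argument is correct and is essentially the paper's proof. Both use Theorem \ref{local coho} to deduce that $H_{\mathfrak{m}}^{i}(S/I(G)^{(k)})_{j}\neq0$ forces $H_{\mathfrak{m}}^{i}(S/I(G)^{k})_{j}\neq0$, and read off the inequality from that, with $k<t$ trivial by Lemma \ref{char of symb=ord}. One small wording fix: the relevant map is the one induced by the quotient $S/I(G)^{k}\rightarrow S/I(G)^{(k)}$, not a connecting map, but it is degree-preserving, so your degreewise surjectivity claim stands.
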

\begin{proof}
From Lemma \ref{char of symb=ord}, if $1\leq k<t$, then we have $I(G)^{(k)}=I(G)^{k}$. Hence, it is enough to consider the case that $k=t$. From Theorem \ref{local coho}, if $H_{\mathfrak{m}}^{i}(S/I(G)^{(k)})_{j}\neq0$, then we have $H_{\mathfrak{m}}^{i}(S/I(G)^{k})_{j}\neq0$, and thus, the assertion follows. 
\end{proof}


\section{A comparison of depths between ordinary and symbolic powers}\label{depth}

In this section, we investigate a comparison of the depth between ordinary and symbolic powers of edge ideals. For each $k\geq2$, we provide a squarefree monomial ideal $I$ of $S$ such that $\depth S/I^{(k)}<\depth S/I^{k}$. Motivated by this example, we turn to the case of edge ideals. As a main theorem, we prove that its comparison holds for arbitrary graphs when second and third powers. Moreover, we study the module $I(G)^{(k)}/I(G)^{k}$ in order to obtain results on the depth comparison. In particular, we provide a complete characterization of $\dim I(G)^{(k)}/I(G)^{k}$ in graph-theoretic terms as an answer questions posed in \cite{hm2026}. First, for each $k\geq2$, we give an example that the inequality $\depth S/I_{\Delta}^{(k)}\geq \depth S/I_{\Delta}^{k}$ does not hold.

\begin{thm}\label{ex}
For any $k\geq2$, there exists a squarefree monomial ideal $I$ with $\height I=2$ such that 
$$\depth S/I^{(k)}<\depth S/I^{k}.$$
\end{thm}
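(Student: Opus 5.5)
We will take $I=J(W(K_m))$, the cover ideal of the whisker graph of a complete graph, with $m$ chosen in terms of $k$. Our first guess is $m=k+1$; it may need to be adjusted once the computations are done. Here $S=K[x_1,\ldots,x_m,y_1,\ldots,y_m]$.

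\textbf{Setting up the minimal primes and the degree complexes.} The minimal vertex covers of $W(K_m)$ are easy to list, so
$$\Min(I)=\{(x_i,y_i)\,\,:1\leq i\leq m\}\cup\{(x_i,x_j)\,\,:1\leq i<j\leq m\}.$$
In particular $\height I=2$, and
$$I^{(k)}=\bigcap_i(x_i,y_i)^k\cap\bigcap_{i<j}(x_i,x_j)^k.$$
For ${\bf a}\in\mathbb{N}^{2m}$, the complex $\Delta_{\bf a}(I^{(k)})$ has an explicit description: its facets are the complements of those minimal primes $P$ with ${\bf x}^{\bf a}\notin P^k$. Concretely, these are the primes $(x_i,y_i)$ with $a_{x_i}+a_{y_i}<k$ and the primes $(x_i,x_j)$ with $a_{x_i}+a_{x_j}<k$. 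For general ${\bf a}\in\mathbb{Z}^{2m}$ we will use the reduction $\Delta_{\bf a}=\link_{\Delta_{{\bf a}^+}}\supp_-{\bf a}$, quoted from \cite{kmt2025} in Lemma \ref{surj}. Combined with Takayama's formula (Theorem \ref{thm:Takayama}), this turns $\depth S/I^{(k)}$ into a question about reduced cohomology of links in the subcomplex of $\Delta(I)$ generated by the complements of the "light" primes.

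\textbf{Upper bound for the symbolic power.} We will exhibit an explicit ${\bf a}$ giving $H^{d}_{\mathfrak m}(S/I^{(k)})_{\bf a}\neq0$ for a small index $d$. The idea is to make $\supp_-{\bf a}$ large, taking all $y_i$ negative with value $-1$. After that, the link picks out the primes $(x_i,x_j)$ together with a few $(x_i,y_i)$, and the resulting complex should be disconnected or have low-dimensional homology. The positive part on the $x_i$ will be chosen so that exactly a prescribed pattern of pairs has weight $<k$. The natural choice is $a_{x_i}=\lfloor (k-1)/2\rfloor$ or similar, so that pairs $x_i,x_j$ fall just below $k$ while the diagonal primes are cut off.

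\textbf{Lower bound for the ordinary power.} We then need $\depth S/I^{k}$ to be strictly larger than this value. Since $I^{k}\subseteq I^{(k)}$ and the two ideals differ, we will compute $\Delta_{\bf a}(I^k)$ via $\sqrt{I^k:{\bf x}^{\bf a}}$. The needed tool is \cite[Lemma 2.19]{mnptv2022}: $\Delta_{\bf a}$ is determined by $\sqrt{I:{\bf x}^{\bf a}}$, and these radicals are again intersections of some of the primes of $I$. There are two routes. The first is to identify the extra generators of $I^{(k)}$ not in $I^k$; these come from odd cycles/triangles in $K_m$ via products like $x_ix_jx_l$. We would then show that, in the critical degrees, the colon $I^k:{\bf x}^{\bf a}$ has radical containing an additional prime that fills in the homology. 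The alternative, which we would try if the degree-by-degree analysis gets messy, is to use a Betti splitting or a short exact sequence in the variables $y_i$, in the spirit of Lemma \ref{lemma2}. This would compute $\depth S/I^k$ directly as a minimum over colon ideals $I^k:\prod_{i\in F}y_i$, which reduce to powers of cover ideals of complete graphs on fewer vertices, whose depths are known.

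\textbf{Main obstacle.} We expect the lower bound on $\depth S/I^k$ for the ordinary power to be the hardest step. It requires controlling every ${\bf a}$, not just exhibiting one. It is also exactly where the choice of $m$ relative to $k$ must be tuned: $I^k$ must not already have the same low-degree homology as $I^{(k)}$, which happens when $m$ is too small relative to $k$.
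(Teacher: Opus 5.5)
There is a genuine gap. You have the paper's example ($I=J(W(K_{k+1}))$, and $m=k+1$ is the right choice), but the proposal establishes neither depth estimate, and the one concrete step it specifies would fail.

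\textbf{The symbolic side.} Putting every $y_i$ into $\supp_{-}{\bf a}$ can never certify $\depth S/I^{(k)}\le k$. By Takayama's formula, $H^{i}_{\mathfrak m}(S/I^{(k)})_{\bf a}\cong\widetilde{H}^{i-|\supp_{-}{\bf a}|-1}(\Delta_{\bf a}(I^{(k)});K)$ vanishes for $i<|\supp_{-}{\bf a}|$. Here $|\supp_{-}{\bf a}|\ge m=k+1$, whereas you need a nonvanishing in degree at most $k$, since in fact $\depth S/I^{k}=k+1$. Concretely, only the primes $(x_i,x_j)$ have complements containing $Y$. So the link of $Y$ is the complex on $X$ generated by the sets $X\setminus\{x_i,x_j\}$ with $a_{x_i}+a_{x_j}<k$. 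With $a_{x_i}=\lfloor (k-1)/2\rfloor$ every pair qualifies, the link is the $(m-3)$-skeleton of the simplex on $X$, and the only module you detect is $H^{2m-2}_{\mathfrak m}$, the top one. No other nonnegative $x$-weights or choice of $m$ rescue this: the light pairs always form a threshold graph, whose independence complex is contractible or homotopy equivalent to finitely many points.

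The weighting must go the other way. Take ${\bf a}\in\mathbb{N}^{2m}$ with $a_{x_i}=\lceil k/2\rceil$ and $a_{y_i}=0$. Then exactly the primes $(x_i,y_i)$ are light, and $\Delta_{\bf a}(I^{(k)})$ is generated by the sets $(X\cup Y)\setminus\{x_i,y_i\}$. Its Alexander dual is the boundary of the $m$-dimensional cross-polytope, so $H^{m-1}_{\mathfrak m}(S/I^{(k)})_{\bf a}\cong\widetilde{H}^{m-2}(\Delta_{\bf a}(I^{(k)});K)\neq0$. This gives $\depth S/I^{(k)}\le k$ for $m=k+1$. The paper instead quotes \cite[Corollary 3.2]{fy2024} for $\depth S/I^{(k)}=k$.

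\textbf{The ordinary side.} The decisive half, $\depth S/I^{k}\ge k+1$, is missing. It requires $H^{i}_{\mathfrak m}(S/I^{k})_{\bf a}=0$ for all $i\le k$ and all ${\bf a}$, and neither of your routes is carried out. At the correct degree your heuristic is right: for $m=3$, $k=2$ one gets $\sqrt{I^{2}:x_1x_2x_3}=\bigcap_{i}(x_i,y_i)\cap(x_1,x_2,x_3)$, and the extra facet $\{y_1,y_2,y_3\}$ makes the complex contractible. But checking one degree gives no lower bound on depth.

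The splitting route rests on an unproved reduction. Colons do not commute with powers here: for $m=3$, $x_2^2x_3^2\in(I:y_1)^2\setminus(I^2:y_1)$. Nothing identifies $I^{k}:\prod_{i\in F}y_i$ with powers of cover ideals of complete graphs.

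The idea the paper uses is structural. $I(W(K_{k+1}))$ is the polarization of $Q=(z_1,\ldots,z_{k+1})^2$, and $I$, its Alexander dual, is the ideal $L(Q)$ of \cite{bhhm2015}. Then \cite[Corollary 11]{bhhm2015} gives $\depth S/I^{k}=2(k+1)-\max\deg\lcm(u_1,\ldots,u_k)-1=k+1$, with $u_j\in\{1,z_1,\ldots,z_{k+1}\}$. For general $m$ the same formula gives $2m-\min\{k,m\}-1$, which exceeds the symbolic value $m-1$ exactly when $m\ge k+1$. This is the tuning you anticipated.
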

\begin{proof}
Fix $k\geq2$. Set and let $G=W(K_{k+1})$ be the whisker graph of the complete graph $K_{k+1}$. Write $V(G)=\{x_1,\ldots,x_{k+1},y_1,\ldots,y_{k+1}\}$,
where $V(K_{k+1})=\{x_1,\ldots,x_{k+1}\}$ and $y_i$ is a leaf adjacent to $x_i$. Let $I=J(G)$ be the cover ideal of $G$. From \cite[Corollary 3.2]{fy2024}, we know $\depth S/I^{(k)}=(k+1)-1=k$. We now compute $\depth S/I^k$. Set $R=K[z_1,\ldots,z_{k+1}]$ and $Q=(z_1,\ldots,z_{k+1})^{2}$. Following \cite{bhhm2015}, let $\operatorname{Mon}(R\setminus Q)$ denote the set of monomials of $R$ which do not belong to $Q$. Then, we can easily verify that $\operatorname{Mon}(R\setminus Q)=\{1,z_{1},\ldots,z_{k+1}\}$. Since, for each $i$, the smallest positive integer $b_i$ such that $z_i^{b_i}\in Q$ is $b_i=2$, we have $R^{\rm pol}=K[z_{i,1},z_{i,2}:1\leq i\leq k+1]$, which we identify with $S$ by setting $z_{i,1}=x_{i}$ and $z_{i,2}=y_{i}$. Let $L(Q)$ be a monomial ideal generated by the monomials $z_{1,a_1+1}\cdots z_{k+1,a_{(k+1)}+1}$, where $z_1^{a_1}\cdots z_{k+1}^{a_{k+1}}\in\operatorname{Mon}(R\setminus Q)$. Now, since $\operatorname{Mon}(R\setminus Q)=\{1,z_{1},\ldots,z_{k+1}\}$, we see that $L(Q)=(x_1\cdots x_{k+1})+(y_i\prod_{j\neq i}x_j:1\leq i\leq k+1)$. On the other hand, since the set of minimal vertex covers of $G$ is 
$$\Min(G)=\{X_{[k+1]}\}\cup(\bigcup_{1\leq i\leq k+1}\{\{x_{1},\ldots,x_{k+1}, y_{i}\}\setminus\{x_{i}\}\}),$$ and hence, we see that $L(Q)=J(G)=I$.
From \cite[Corollary 11]{bhhm2015}, we obtain that 
\begin{align*}
\depth S/I^{k}&=\depth R^{\rm pol}/L(Q)^{k} \\
&=\sum_{1\leq i\leq k+1}b_{i}-\max\{\deg(\lcm(u_{1},\ldots, u_{k}))\,\,: u_{i}\in\operatorname{Mon}(R\setminus Q)\}-1 \\
&=2(k+1)-k-1 \\
&=k+1.
\end{align*}
Therefore, we obtain that 
$\depth S/I^{(k)}=k<k+1=\depth S/I^{k}$, as required. 
\end{proof}

From Theorem \ref{ex}, it is natural to ask when does the inequality hold for squarefree monomial ideals. We investigate this question for edge ideals. First, the following corollary is immediate from Theorem \ref{local coho}.

\begin{cor}
Let $G$ be a graph with $\ogirth(G)=2t-1$. If $1\leq k\leq t$, then we have $$\depth S/I(G)^{(k)}\geq\depth S/I(G)^{k}.$$ 
\end{cor}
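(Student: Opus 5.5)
The plan is to read the depth inequality off Theorem \ref{local coho} directly. For $1\leq k<t$, Lemma \ref{char of symb=ord} gives $I(G)^{(k)}=I(G)^{k}$, so both sides are equal and nothing needs proof. The same holds for $k=1$. So the only case to handle is $k=t$.

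For $k=t$, Theorem \ref{local coho} says that for every $i$ the natural map
$$H_{\mathfrak{m}}^{i}(S/I(G)^{k})\rightarrow H_{\mathfrak{m}}^{i}(S/I(G)^{(k)})$$
is surjective. Set $d=\depth S/I(G)^{k}$. By the characterization of depth recalled in the preliminaries,
$$d=\min\{i : H_{\mathfrak{m}}^{i}(S/I(G)^{k})\neq0\}.$$
Hence for every $i<d$ the source module vanishes. Surjectivity then forces $H_{\mathfrak{m}}^{i}(S/I(G)^{(k)})=0$ for all $i<d$. This gives $\depth S/I(G)^{(k)}\geq d$, which is the claim.

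There is no real obstacle here. The only points to note are that $S/I(G)^{(k)}\neq0$, so its depth is finite, and that the case $k<t$ is trivial. The substantive work, the radical equality $\sqrt{I^{(t)}:{\bf x}^{\bf b}}=\sqrt{I^{t}:{\bf x}^{\bf b}}$ feeding into Lemma \ref{surj}, was already carried out in Theorem \ref{local coho}.
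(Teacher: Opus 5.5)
Your argument is correct and matches the paper, which states this corollary as immediate from Theorem \ref{local coho}: surjectivity of $H_{\mathfrak{m}}^{i}(S/I(G)^{k})\rightarrow H_{\mathfrak{m}}^{i}(S/I(G)^{(k)})$ forces the target to vanish whenever the source does, the same pattern used in the proof of Corollary \ref{ineq of reg}. Your separate treatment of $k<t$ is harmless but not needed, since Theorem \ref{local coho} already covers all $1\leq k\leq t$.
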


It is known that Minh's conjecture hold for $k=2,3$, that is, $\reg I(G)^{(k)}=\reg I(G)^{k}$ for $k=2,3$ in \cite[Theorem 1.1]{mnptv2022}. Motivated by this result, we analogically investigate the depth comparison between small ordinary and symbolic powers. To this end, we prove an inclusion between the sets of associated radical ideals of second and third powers of edge ideals. Let us recall the notion of an associated radical ideal, which was introduced by Hochster in \cite{h1977}. Following the notation of Jafari and Sabzrou \cite{js2019}, for a monomial ideal $I\subset S$, we set 
$$\assrad(I)=\{\sqrt{I:u}\,\,: u\mbox{ is a monomial with }u\notin I\}.$$
An element of $\assrad(I)$ is called an {\it associated radical ideal of }$I$. 
To prove the inclusion, we prepare the following lemma:

\begin{lemma}\label{lemma for assrad}
Let $G$ be a graph on the vertex set $[n]$ and let ${\bf a}\in\mathbb{N}^{n}$ with ${\bf x}^{\bf a}\notin I(G)^{(3)}$. Suppose that $\sqrt{I(G)^{3}:{\bf x}^{\bf a}}\subsetneq\sqrt{I(G)^{(3)}:{\bf x}^{\bf a}}$. Then, for any $i\in\{i\,\,: x_{i}\in\mathcal{G}(\sqrt{I(G)^{(3)}:{\bf x}^{\bf a}}), x_{i}\notin \sqrt{I(G)^{3}:{\bf x}^{\bf a}}\}$, there exist $j_{i}\in V(G)$ and a triangle $T_{i}$ such that $\{i,j_{i}\}\in E(G)$, $j_{i}\notin T_{i}$, and $x_{j_{i}}{\bf x}_{T_{i}}$ divides ${\bf x}^{\bf a}$. 
\end{lemma}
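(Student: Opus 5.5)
The plan is to use the known description of the third symbolic power of an edge ideal in terms of triangles and induced pentagons, and then locate the vertex $i$ inside the witnessing generator by a short case analysis. Write $I=I(G)$. For a triangle or cycle $C$, write ${\bf x}_{C}$ as in the proof of Theorem \ref{local coho}. I will invoke the standard decomposition (available in the literature on symbolic powers of edge ideals, e.g.\ in the work of Gu--H\'a--O'Rourke--Skelton and of Minh et al.\ \cite{mnptv2022}):
$$I^{(3)}=I^{3}+I\cdot({\bf x}_{T}\,\,: T\mbox{ a triangle})+({\bf x}_{C}\,\,: C\mbox{ an induced }5\mbox{-cycle}).$$
If I wanted to avoid quoting it, I would derive it exactly as in the first part of the proof of Theorem \ref{local coho}. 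One takes a minimal generator $u\in I^{(3)}\setminus I^{3}$ and restricts to $G|_{\supp u}$, using Lemma \ref{char of symb=ord}. Establishing this decomposition cleanly is the step I expect to be the main obstacle; the rest is bookkeeping.

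Now fix $i$ with $x_{i}\in\sqrt{I^{(3)}:{\bf x}^{\bf a}}$ and $x_{i}\notin\sqrt{I^{3}:{\bf x}^{\bf a}}$. Then there is $N\geq 2$ with $x_{i}^{N}{\bf x}^{\bf a}\in I^{(3)}$, while $x_{i}^{M}{\bf x}^{\bf a}\notin I^{3}$ for every $M$. By the decomposition, $x_{i}^{N}{\bf x}^{\bf a}$ is divisible by either some ${\bf x}_{C}$ with $C$ an induced pentagon, or by some $x_{p}x_{q}{\bf x}_{T}$ with $\{p,q\}\in E(G)$ and $T$ a triangle. Throughout, I use the hypothesis ${\bf x}^{\bf a}\notin I^{(3)}$. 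It implies that whichever generator occurs must involve $i$, since otherwise it already divides ${\bf x}^{\bf a}$.

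\emph{Pentagon case.} Here $i\in C$. As in the proof of Theorem \ref{local coho}, $x_{i}{\bf x}_{C}\in I^{3}$. Since ${\bf x}_{C}\mid x_{i}^{N}{\bf x}^{\bf a}$ and the remaining vertices of $C$ divide ${\bf x}^{\bf a}$, we get $x_{i}{\bf x}_{C}\mid x_{i}^{N+1}{\bf x}^{\bf a}$. Hence $x_{i}^{N+1}{\bf x}^{\bf a}\in I^{3}$, which is a contradiction. So this case cannot occur.

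\emph{Triangle case, with $g=x_{p}x_{q}{\bf x}_{T}$.*}
\begin{enumerate}
\item Suppose $i\in T$, say $T=\{i,u,v\}$. Then $x_{i}{\bf x}_{T}=(x_{i}x_{u})(x_{i}x_{v})\in I^{2}$, so $x_{i}g\in I^{3}$. Since $x_{i}g$ divides $x_{i}^{N+1}{\bf x}^{\bf a}$, this is a contradiction. Hence $i\notin T$, and therefore $i\in\{p,q\}$; say $p=i$ and set $j_{i}=q$, so $\{i,j_{i}\}\in E(G)$.
\item Suppose $j_{i}\in T$, say $T=\{j_{i},u,v\}$. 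Then
$$x_{i}^{2}x_{j_{i}}^{2}x_{u}x_{v}=(x_{i}x_{j_{i}})^{2}(x_{u}x_{v})\in I^{3},$$
and this monomial divides $x_{i}^{N+1}{\bf x}^{\bf a}$. This is again a contradiction, so $j_{i}\notin T$.
\end{enumerate}
Finally, $i\notin T\cup\{j_{i}\}$, and $g=x_{i}x_{j_{i}}{\bf x}_{T}$ divides $x_{i}^{N}{\bf x}^{\bf a}$. Therefore $x_{j_{i}}{\bf x}_{T}$ divides ${\bf x}^{\bf a}$, which is the assertion with $T_{i}=T$.
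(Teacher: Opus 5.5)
\textbf{There is a genuine gap: the decomposition of $I^{(3)}$ you start from is false, and your case analysis misses one case.} The rest is sound. Your pentagon case and your triangle case are correct. Your final step, ruling out $j_i\in T$ via $(x_ix_{j_i})^2(x_ux_v)\in I^3$, is exactly the paper's argument. The paper gets $j$ and $T$ with $i\notin T$ from Case 3 of the proof of \cite[Lemma 4.1]{mnptv2022}, and then proves only $j\notin T$ itself.

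The formula omits the generators ${\bf x}_K$ for complete subgraphs $K\cong K_4$. For $G=K_4$, every minimal vertex cover has three vertices, so $x_1x_2x_3x_4\in I^{(3)}$. But every generator of $I^3+I\cdot({\bf x}_T)+({\bf x}_C)$ has degree at least $5$. So your dichotomy ``induced pentagon or $x_px_q{\bf x}_T$'' misses the possibility ${\bf x}_K\mid x_i^N{\bf x}^{\bf a}$. This case is easy to dispose of. $K$ must contain $i$, since otherwise ${\bf x}^{\bf a}\in I^{(3)}$. For $K=\{i,u,v,w\}$ we have $x_i^2{\bf x}_K=(x_ix_u)(x_ix_v)(x_ix_w)\in I^3$, and this divides $x_i^{N+2}{\bf x}^{\bf a}$. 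That contradicts $x_i\notin\sqrt{I^3:{\bf x}^{\bf a}}$.

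The corrected decomposition $I^{(3)}=I^3+I\cdot({\bf x}_T)+({\bf x}_C)+({\bf x}_K)$ also does not follow ``exactly as in'' the proof of Theorem \ref{local coho}. There, the odd-girth hypothesis forces the short odd cycle in $G|_{\supp u}$ to have length $2t-1$. Here, Lemma \ref{char of symb=ord} may only produce a triangle $T$, and you need an extra argument:
\begin{enumerate}
\item If $u\notin I\cdot({\bf x}_T)$, then $R=\supp(u/{\bf x}_T)$ is independent.
\item Suppose $R\cup\{t\}$ were independent for some $t\in T$. The minimal vertex cover complementary to a maximal independent set containing it would carry total exponent at most $2$ of $u$, which is impossible. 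So each vertex of $T$ lies outside $R$ and has a neighbor in $R$.
\item A short matching/pigeonhole argument then shows that $u$ is divisible by three disjoint edges, or by a triangle plus a disjoint edge. The only exception is when all of $T$ has a single common neighbor $s$ in $R$, and then $u$ is divisible by ${\bf x}_K$ with $K=T\cup\{s\}\cong K_4$.
\end{enumerate}
Only when $G|_{\supp u}$ is triangle-free does the restriction argument yield an induced pentagon. With this corrected decomposition, or by citing \cite[Lemma 4.1]{mnptv2022} as the paper does, your proof is complete.
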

\begin{proof}
Set $I=I(G)$, $P=\sqrt{I(G)^{(3)}:{\bf x}^{\bf a}}$, and $Q=\sqrt{I(G)^{3}:{\bf x}^{\bf a}}$. Also, set $U=\{i\,\,: x_{i}\in\mathcal{G}(P), x_{i}\notin Q\}$. From \cite[Lemma 4.1]{mnptv2022}, we see that $P=Q+(x_{i}\,\,: i\in U)$ and $i\notin\supp({\bf a})$ for each $i\in U$. Fix $i\in U$. From Case 3 in the proof of \cite[Lemma 4.1]{mnptv2022}, there exist $j\in V(G)$ and a triangle $T$ such that $\{i,j\}\in E(G)$, $i\notin T\cup\supp({\bf a})$, and $$x_{i}=\sqrt{\frac{x_{i}x_{j}{\bf x}_{T}}{\gcd(x_{i}x_{j}{\bf x}_{T}, {\bf x}^{\bf a})}}.$$ 
We now prove that $j\notin T$. Suppose not and write $T=\{j,p,q\}$. Then, since  $x_{j}{\bf x}_{T}$ divides ${\bf x}^{\bf a}$ and $(x_{i}x_{j})^{2}(x_{p}x_{q})=x_{i}^{2}x_{j}^{2}x_{p}x_{q}=x_{i}^{2}(x_{j}{\bf x}_{T})$, we see that $x_{i}^{2}{\bf x}^{\bf a}\in I^{3}$, that is, $x_{i}\in Q$, a contradiction. Therefore, by setting $j_{i}=j$ and $T_{i}=T$, the assertion follows. 
\end{proof}

\begin{thm}\label{assrad}
Let $G$ be a graph. Then we have 
$$\assrad(I(G)^{(k)})\subset\assrad(I(G)^{k})\mbox{ for }k=2,3.$$
\end{thm}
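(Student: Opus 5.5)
To prove the statement, I will take an arbitrary element $P=\sqrt{I^{(k)}:{\bf x}^{\bf a}}$ of $\assrad(I^{(k)})$, where $I=I(G)$ and ${\bf x}^{\bf a}\notin I^{(k)}$. The goal is to find a monomial ${\bf x}^{\bf c}\notin I^{k}$ with $\sqrt{I^{k}:{\bf x}^{\bf c}}=P$. Since $I^{k}\subset I^{(k)}$, we have ${\bf x}^{\bf a}\notin I^{k}$ and $Q:=\sqrt{I^{k}:{\bf x}^{\bf a}}\subset P$. If $Q=P$ we are done with ${\bf c}={\bf a}$. So the plan is to modify ${\bf a}$ whenever the inclusion is strict.

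\textbf{The case $k=2$.} By \cite[Lemma 4.1]{mnptv2022} (the $k=2$ analogue used there), the generators of $P$ missing from $Q$ are variables $x_i$ with $i\notin\supp({\bf a})$. Each such $i$ is adjacent to a triangle $T$ with ${\bf x}_T\mid {\bf x}^{\bf a}$, since $I^{(2)}=I^2+({\bf x}_T: T \text{ a triangle})$. I will then replace ${\bf a}$ by ${\bf c}={\bf a}+{\bf e}_{j}$ for suitable $j\in T$, or more generally multiply by ${\bf x}_T$ for each triangle $T$ whose support appears. The intended effect is that $x_i{\bf x}^{\bf c}$ now lies in $I^2$ (for example, $x_ix_p\cdot x_px_q\cdot\ldots$) while ${\bf x}^{\bf c}\notin I^2$. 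The cleaner route is to choose ${\bf c}$ as ${\bf a}$ multiplied by $x_p$ for one vertex $p$ of each relevant triangle. Because ${\bf x}^{\bf a}\notin I^{(2)}$ forces ${\bf x}^{\bf a}$ to be nearly squarefree on the triangle, I then need to check ${\bf x}^{\bf c}\notin I^{2}$, and that $\sqrt{I^2:{\bf x}^{\bf c}}$ neither gains variables outside $P$ nor loses any.

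\textbf{The case $k=3$.} Here Lemma~\ref{lemma for assrad} applies. For each $i\in U=\{i: x_i\in\mathcal{G}(P),\ x_i\notin Q\}$, it gives $j_i$ adjacent to $i$ and a triangle $T_i\not\ni j_i$ with $x_{j_i}{\bf x}_{T_i}\mid{\bf x}^{\bf a}$. The key step is the choice ${\bf x}^{\bf c}={\bf x}^{\bf a}\cdot\prod_{i\in U} x_{p_i}$, where $p_i\in T_i$, or possibly ${\bf x}^{\bf c}={\bf x}^{\bf a}\cdot x_{p}$ for a single well-chosen $p$. This is designed so that $x_i x_{j_i}\cdot x_{p_i}x_{q_i}\cdot x_{p_i}x_{r_i}$ divides $x_i{\bf x}^{\bf c}$, which gives $x_i\in\sqrt{I^3:{\bf x}^{\bf c}}$. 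To keep ${\bf x}^{\bf c}\notin I^3$ and avoid enlarging the radical beyond $P$, I would argue as follows. Since $\sqrt{I^3:{\bf x}^{\bf c}}\subset\sqrt{I^{(3)}:{\bf x}^{\bf c}}$, it suffices to show $\sqrt{I^{(3)}:{\bf x}^{\bf c}}=P$, i.e.\ that multiplying by $x_{p}$ does not change the radical of the symbolic colon. This holds if $x_p\in P$ already, because then $\sqrt{I^{(3)}:{\bf x}^{\bf a}x_p}=\sqrt{(I^{(3)}:{\bf x}^{\bf a}):x_p}$, and for the symbolic power the colon by a variable in the radical reduces the minimal primes only in a controlled way. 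Using $I^{(3)}=\bigcap P_C^3$ and the description $\sqrt{I^{(3)}:{\bf x}^{\bf a}}=\bigcap\{P_C: \sum_{v\in C}a_v\le 2\}$, I would check that each $P_C$ with $\sum_{C}a_v\le2$ stays $\le 2$ after the increment. This requires choosing $p$ so that no such cover $C$ contains $p$.

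\textbf{Main obstacle.} The hard step is this choice of the vertex to increment. The difficulty is to show that one can always pick $p_i\in T_i$ such that every minimal vertex cover $C$ with $\sum_{v\in C}a_v\le 2$ avoids $p_i$, while ${\bf x}^{\bf c}\notin I^{(3)}$. Since $x_{j_i}{\bf x}_{T_i}\mid {\bf x}^{\bf a}$ and any cover contains at least two vertices of $T_i$, such a $C$ already has weight $\ge2$ on $T_i$. I would first try to exploit this tightness: on $T_i$ the cover must meet exactly two vertices of weight one, and weights outside $T_i$ vanish. Then I would take $p_i$ to be the vertex of $T_i$ that is forced to be excluded, since $C\ni j_i$ or $C\ni i$ constrains which two vertices are used. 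If this fails, the fallback is to iterate, adding one variable at a time and showing that $|U|$ strictly decreases while $P$ is preserved.
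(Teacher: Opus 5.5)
Your $k=3$ argument has the same outline as the paper's: enlarge ${\bf x}^{\bf a}$ so that $\sqrt{I^{(3)}:\cdot}$ stays equal to $P$ while every $x_i$ with $i\in U$ enters $\sqrt{I^{3}:\cdot}$. The gap is exactly at the step you flag. Incrementing a vertex $p_i\in T_i$ fails in general, because there need not be any vertex of $T_i$ avoided by all minimal covers $C$ with $\sum_{v\in C}a_v\le 2$. So there is no ``forced excluded'' vertex. For example, let $G$ be a triangle $\{p,q,r\}$ disjoint from an edge $\{i,j\}$, and let ${\bf x}^{\bf a}=x_jx_px_qx_r$. The covers of weight at most $2$ are $\{i,p,q\}$, $\{i,p,r\}$ and $\{i,q,r\}$. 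Hence $P=(x_i,x_px_q,x_px_r,x_qx_r)$, $U=\{i\}$, $j_i=j$, $T_i=\{p,q,r\}$, and each vertex of $T_i$ lies in two of these covers. Take $p_i=p$ (the other choices are symmetric), so ${\bf x}^{\bf c}=x_jx_p^2x_qx_r$. Then $x_q^{3}{\bf x}^{\bf c}$ is divisible by $(x_px_q)^2(x_qx_r)$, so $x_q\in\sqrt{I^3:{\bf x}^{\bf c}}$ although $x_q\notin P$. Your iterative fallback is not specified enough to get around this.

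The missing idea is to increment the neighbour $j_i$ rather than a vertex of the triangle; the paper uses ${\bf x}^{\bf b}={\bf x}^{\bf a}\prod_{i\in U}x_{j_i}$. Your own tightness remark already shows this is safe. A minimal cover $C$ with $\sum_{v\in C}a_v\le 2$ contains at least two vertices of $T_i$, each of weight at least $1$, and these use up its whole budget. Since $a_{j_i}\ge 1$ and $j_i\notin T_i$, it follows that $j_i\notin C$. So these covers keep weight at most $2$, and $\sqrt{I^{(3)}:{\bf x}^{\bf b}}=P$ for all $i\in U$ at once, with no iteration. Moreover, $x_i^2{\bf x}^{\bf b}$ is divisible by $(x_ix_{j_i})^2x_px_q$ for an edge $\{p,q\}\subset T_i$, which gives $x_i\in\sqrt{I^3:{\bf x}^{\bf b}}$.

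Your $k=2$ paragraph also needs replacing, because the configuration it describes cannot occur: ${\bf x}_T\mid{\bf x}^{\bf a}$ would put ${\bf x}^{\bf a}\in I^{(2)}$. In fact $P=Q$ always holds for $k=2$ (\cite[Lemma 3.1]{mnptv2022}, cited in the paper). Suppose $u{\bf x}^{\bf a}\in I^{(2)}\setminus I^2$ for a monomial $u$. Then ${\bf x}_T\mid u{\bf x}^{\bf a}$ for some triangle $T$, while ${\bf x}_T\nmid{\bf x}^{\bf a}$. So $x_v\mid u$ for some $v\in T$, and $u^2{\bf x}^{\bf a}$ is divisible by $x_v{\bf x}_T\in I^2$. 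No modification of ${\bf x}^{\bf a}$ is needed.
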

\begin{proof}
Set $I=I(G)$. For $k=2$, from \cite[Lemma 3.1]{mnptv2022}, we know $\sqrt{I^{(2)}:{\bf x}^{\bf a}}=\sqrt{I^{2}:{\bf x}^{\bf a}}$ for ${\bf x}^{\bf a}\notin I^{(2)}$, and hence the assertion follows. Suppose that $k=3$. Fix $P\in\assrad(I^{(3)})$. Then there exists ${\bf a}\in\mathbb{N}^{n}$ such that ${\bf x}^{\bf a}\notin I^{(3)}$ and $P=\sqrt{I^{(3)}:{\bf x}^{\bf a}}$. Set $Q=\sqrt{I^{3}:{\bf x}^{\bf a}}$ and $U=\{i\,\,: x_{i}\in\mathcal{G}(P), x_{i}\notin Q\}$. Moreover, from \cite[Lemma 4.1]{mnptv2022}, we have $i\notin\supp({\bf a})$ for each $i\in U$. Since $I^{3}\subset I^{(3)}$, we clearly have $Q\subset P$. If $P=Q$, then there is nothing to prove. Hence, we suppose that $Q\subsetneq P$. Then, from Lemma \ref{lemma for assrad}, for any $i\in U$, there exist $j_{i}\in V(G)$ and a triangle $T_{i}$ such that $\{i,j_{i}\}\in E(G)$, $j_{i}\notin T_{i}$, and $x_{j_{i}}{\bf x}_{T_{i}}$ divides ${\bf x}^{\bf a}$. Set $W=\{j_{i}\,\,; i\in U\}$ and define a vector ${\bf b}\in\mathbb{N}^{n}$ such that ${\bf x}^{\bf b}={\bf x}^{\bf a}\prod_{j\in W}x_{j}$. Then we claim that $P=\sqrt{I^{(3)}:{\bf x}^{\bf b}}$. Let $\Min(G)$ denotes the minimal vertex covers of $G$ and set $P_{C}=(x_{\ell}\,\,: \ell\in C)$ for every $C\in\Min(G)$. For any ${\bf c}\in\mathbb{N}^{n}$, by a straightforward computation, we have 
\[
\sqrt{P_{C}^{3}:{\bf x}^{\bf c}}=
\begin{cases}
P_{C}, & \text{if } \sum_{\ell\in C}c_{\ell}\leq2. \\
S, & \text{ otherwise.}
\end{cases}
\]
Hence, since $I^{(3)}=\bigcap_{C\in\Min(G)}P_{C}^{3}$, for any ${\bf c}\in\mathbb{N}^{n}$, we obtain that $$\sqrt{I^{(3)}:{\bf x}^{\bf c}}=\bigcap_{\substack{C\in\Min(G), \\\sum_{\ell\in C}c_{\ell}\leq2}}P_{C}.$$
Suppose that $\sum_{\ell\in C}a_{\ell}\leq2$ for some $C\in\Min(G)$. We prove that $C\cap W=\emptyset$. Suppose not. Then there exists $i\in U$ such that $j_{i}\in C$. Since $T_{i}$ is a triangle and $C$ is a vertex cover of $G$, we have $|C\cap T_{i}|\geq2$. Hence, we can take $p,q\in C\cap T_{i}$ with $p\neq q$. Since $x_{j_{i}}{\bf x}_{T_{i}}$ divides ${\bf x}^{\bf a}$, we have $a_{j_{i}}\geq1$, $a_{p}\geq1$, and $a_{q}\geq1$. Thus, we see that $\sum_{\ell\in C}a_{ell}\geq a_{j_{i}}+a_{p}+a_{q}\geq3$, a contradiction. Hence, by the definition of ${\bf b}$, we have $\sum_{\ell\in C}b_{\ell}=\sum_{\ell\in C}a_{\ell}\leq2$, which implies that $\sqrt{I^{(3)}:{\bf x}^{\bf b}}=\sqrt{I^{(3)}:{\bf x}^{\bf a}}=P$, in particular, we see that $\sqrt{I^{3}:{\bf x}^{\bf b}}\subset P$. Finally, we prove that $\sqrt{I^{3}:{\bf x}^{\bf b}}\supset P$. Since ${\bf x}^{\bf a}$ divides ${\bf x}^{\bf b}$, we have $Q\subset\sqrt{I^{3}:{\bf x}^{\bf b}}$. Fix $i\in U$ and write $T_{i}=\{p,q,r\}$. Now, since $x_{j_{i}}x_{p}x_{q}x_{r}$ divides ${\bf x}^{\bf a}$ and $j_{i}\in W$, $x_{j_{i}}^{2}x_{p}x_{q}$ divides ${\bf x}^{\bf b}$. On the other hand, since $\{i,j_{i}\},\{p,q\}\in E(G)$, we obtain $(x_{i}x_{j_{i}})^{2}(x_{p}x_{q})\in I^{3}$, and thus we see that $x_{i}^{2}{\bf x}^{\bf b}\in I^{3}$, that is, $x_{i}\in\sqrt{I^{3}:{\bf x}^{\bf b}}$. Therefore, we obtain that $P=Q+(x_{i}\,\,: i\in U)\subset\sqrt{I^{3}:{\bf x}^{\bf b}}$. Consequently,  we obtain that $\sqrt{I^{3}:{\bf x}^{\bf b}}=\sqrt{I^{(3)}:{\bf x}^{\bf a}}$. Since $P\neq S$, we have ${\bf x}^{\bf b}\notin I^{3}$, and thus, we get $P\in\assrad(I^{3})$, as required. 
\end{proof}

As a corollary of Theorem \ref{assrad} and the following theorem, we obtain the depth comparison for small ordinary and symbolic powers as an analogue of the results for the regularity given in \cite[Theorem 1.1]{mnptv2022}. 

\begin{thm}[\cite{h1977}]\label{Hocster}
For a monomial ideal $I$ of $S$, we have 
$$\depth S/I=\min\{\depth S/J\,\,: J\in\assrad(I)\}.$$
\end{thm}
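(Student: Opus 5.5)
The plan is to prove the equality through Takayama's formula (Theorem \ref{thm:Takayama}), by matching nonzero graded pieces of $H^i_{\mathfrak m}(S/I)$ with nonzero graded pieces of $H^i_{\mathfrak m}(S/J)$ for $J\in\assrad(I)$. The basic observation is that every complex $\Delta_{\bf a}(I)$ depends only on one radical colon ideal. Fix ${\bf a}\in\mathbb{Z}^n$, put $F=\supp_{-}{\bf a}$, and let ${\bf a}^+$ be its positive part. Then ${\bf x}^{\bf a}\notin IS_{F\cup H}$ holds if and only if ${\bf x}^{{\bf a}^+}{\bf x}_F^N\notin IS_{F\cup H}$ for $N\gg0$, because the exponents on the inverted variables are irrelevant. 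This in turn holds if and only if ${\bf x}_{F\cup H}\notin J:=\sqrt{I:{\bf x}^{{\bf a}^+}}$. Indeed, $J$ is squarefree, and a monomial lies in $IS_{F\cup H}:{\bf x}^{{\bf a}^+}$ up to radical exactly when some power of ${\bf x}_{F\cup H}$ lies in $I:{\bf x}^{{\bf a}^+}$. This is the same mechanism as \cite[Lemma 2.19]{mnptv2022} together with the link description used in the proof of \cite[Theorem 3.2]{kmt2025}. Consequently $\Delta_{\bf a}(I)=\link_{\Delta_J}F$ when ${\bf x}^{{\bf a}^+}\notin I$, and $\Delta_{\bf a}(I)$ is void when ${\bf x}^{{\bf a}^+}\in I$. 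Here $\Delta_J$ denotes the Stanley--Reisner complex of $J$.

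For the inequality ``$\geq$'', let $i=\depth S/I$ and choose ${\bf a}$ with $H^i_{\mathfrak m}(S/I)_{\bf a}\neq0$. Then $\Delta_{\bf a}(I)$ is not void, so $u={\bf x}^{{\bf a}^+}\notin I$ and $J=\sqrt{I:u}\in\assrad(I)$. Define ${\bf d}$ by $d_\ell=-1$ for $\ell\in F$ and $d_\ell=0$ otherwise. Applying Takayama's formula to the squarefree ideal $J$ gives $\Delta_{\bf d}(J)=\link_{\Delta_J}F=\Delta_{\bf a}(I)$. The two local cohomology modules are then computed by the same reduced cohomology group in the same shifted degree, so $H^i_{\mathfrak m}(S/J)_{\bf d}\neq0$. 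Hence $\depth S/J\leq i$.

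For the inequality ``$\leq$'', take $J=\sqrt{I:{\bf x}^{\bf c}}$ with ${\bf x}^{\bf c}\notin I$, and let $j=\depth S/J$. Since $J$ is squarefree, Hochster's formula (equivalently Takayama's formula) provides ${\bf d}\in\{0,-1\}^n$ with $F=\supp_-{\bf d}\in\Delta_J$ and $\widetilde H^{j-|F|-1}(\link_{\Delta_J}F;K)\neq0$. Now build ${\bf a}$ by setting $a_\ell=-1$ for $\ell\in F$ and $a_\ell=c_\ell$ for $\ell\notin F$.

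The main obstacle is that ${\bf a}^+$ is only the restriction of ${\bf c}$ to the complement of $F$. Thus $\sqrt{I:{\bf x}^{{\bf a}^+}}$ may be strictly smaller than $J$, and the shortcut from the first direction fails. I would bypass this by computing $\Delta_{\bf a}(I)$ directly rather than through ${\bf a}^+$. For $H$ disjoint from $F$, the exponents on $F$ do not matter after inverting $F$, so ${\bf x}^{\bf a}\notin IS_{F\cup H}$ if and only if ${\bf x}^{\bf c}{\bf x}_F^N\notin IS_{F\cup H}$. This holds if and only if no power of ${\bf x}_{F\cup H}$ lies in $I:{\bf x}^{\bf c}$, that is, if and only if $F\cup H\in\Delta_J$. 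Hence $\Delta_{\bf a}(I)=\link_{\Delta_J}F$, which has nonvanishing reduced cohomology in degree $j-|F|-1$. Takayama's formula then gives $H^j_{\mathfrak m}(S/I)_{\bf a}\neq0$, so $\depth S/I\leq j$. Combining the two inequalities yields the stated equality.
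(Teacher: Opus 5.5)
Your argument is correct. The paper does not prove this statement: it quotes it from Hochster \cite{h1977} and uses it as a black box to pass from Theorem \ref{assrad} to Corollary \ref{small power}. So there is no in-paper proof to match, and your derivation is a genuine, self-contained substitute.

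It runs on the same mechanism the paper itself uses in Lemma \ref{surj}. The degree complex $\Delta_{\bf a}(I)$ is the link of $\supp_{-}{\bf a}$ in the Stanley--Reisner complex of $\sqrt{I:{\bf x}^{{\bf a}^{+}}}$, and it is void when ${\bf x}^{{\bf a}^{+}}\in I$.

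You handle the second direction correctly. The ideal $\sqrt{I:{\bf x}^{{\bf a}^{+}}}$ can be strictly smaller than $J=\sqrt{I:{\bf x}^{\bf c}}$. However, the two radicals contain the same squarefree monomials ${\bf x}_{F\cup H}$ divisible by ${\bf x}_F$, because a large power of ${\bf x}_{F\cup H}$ absorbs the exponents $c_\ell$ for $\ell\in F$. Hence their links at $F$ coincide.

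Beyond Takayama's formula, you only use one further fact: for squarefree $J$, a nonzero piece $H^{j}_{\mathfrak m}(S/J)_{\bf e}$ forces ${\bf e}\leq 0$ and depends only on $\supp_{-}{\bf e}$. This holds because otherwise $\Delta_{\bf e}(J)$ is a cone over any vertex of $\supp {\bf e}^{+}$. It is the standard Hochster formula and fine to cite.

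Your route also gives more than the stated equality. Both of your directions work in every cohomological degree, so you obtain
$\{i : H^{i}_{\mathfrak m}(S/I)\neq 0\}=\bigcup_{J\in\assrad(I)}\{i : H^{i}_{\mathfrak m}(S/J)\neq 0\}$.
The depth formula is just the minimum of this set.
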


\begin{cor}\label{small power}
For any graph $G$, we have 
$$\depth S/I(G)^{(k)}\geq\depth S/I(G)^{k}\mbox{ for }k=2,3.$$
\end{cor}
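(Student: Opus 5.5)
The plan is to combine Theorem \ref{assrad} with Hochster's formula, Theorem \ref{Hocster}. Fix $k\in\{2,3\}$ and set $I=I(G)$. Theorem \ref{Hocster} applied to the monomial ideals $I^{(k)}$ and $I^{k}$ gives
$$\depth S/I^{(k)}=\min\{\depth S/J\,\,: J\in\assrad(I^{(k)})\}$$
and
$$\depth S/I^{k}=\min\{\depth S/J\,\,: J\in\assrad(I^{k})\}.$$

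By Theorem \ref{assrad}, every associated radical ideal of $I^{(k)}$ is also one of $I^{k}$. Taking the minimum of $\depth S/J$ over a subset gives a value at least as large as the minimum over the whole set. Hence $\depth S/I^{(k)}\geq\depth S/I^{k}$.

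Before applying this, I will check that both index sets are nonempty, so that neither minimum is taken over the empty set. Since $E(G)\neq\emptyset$, both $I^{(k)}$ and $I^{k}$ are proper ideals, so the monomial $u=1$ lies outside each of them. Thus $\sqrt{I^{(k)}:1}$ and $\sqrt{I^{k}:1}$ are genuine elements of the respective $\assrad$ sets. In the degenerate case $I=0$ there is nothing to prove.

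I do not expect any real obstacle. All the substantive work is the inclusion of associated radical sets in Theorem \ref{assrad}, together with Hochster's theorem; the corollary is a formal consequence of these two.
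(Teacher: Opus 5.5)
Your proof is correct and matches the paper's argument: the corollary follows by combining the inclusion $\assrad(I(G)^{(k)})\subset\assrad(I(G)^{k})$ from Theorem~\ref{assrad} with Hochster's formula (Theorem~\ref{Hocster}), because a minimum over a subset is at least the minimum over the whole set. Your check that both sets are nonempty (take $u=1$) is a harmless detail that the paper leaves implicit.
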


Moreover, for the second and fourth powers, we give a sufficient condition for the depth inequality to hold for all powers.

\begin{prop}\label{sufficient}
Let $G$ be a connected graph. If $\depth S/I(G)^{2}\leq1$ or $\depth S/I(G)^{4}=0$, then we have  
$$\depth S/I(G)^{(k)}\geq\depth S/I(G)^{k}\mbox{ for all }k\geq1.$$
\end{prop}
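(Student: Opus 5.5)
The plan is to combine three known facts: the depth of ordinary powers of edge ideals is non-increasing from the second power on; it stabilizes at a value determined by bipartiteness; and symbolic powers of a squarefree monomial ideal never have depth zero. Then we split on which hypothesis holds.

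First recall the facts.
\begin{enumerate}
\item For a connected graph $G$ on $n$ vertices, $\lim_{k\to\infty}\depth S/I(G)^{k}=0$ if $G$ is non-bipartite and $=1$ if $G$ is bipartite (Trung's result on the limit depth of powers of edge ideals).
\item For edge ideals, $\depth S/I(G)^{k}$ is non-increasing in $k$ (the result of Hien--Lam--Trung / Hoa--Trung showing non-increasing depth of powers of edge ideals).
\item $\depth S/I(G)^{(k)}\geq 1$ for every $k$. This holds because $I(G)^{(k)}$ is an intersection of powers of minimal primes, none equal to $\mathfrak{m}$ since $\height I(G)<n$ when $E(G)\neq\emptyset$. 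Hence $\mathfrak{m}\notin\Ass(S/I(G)^{(k)})$.
\end{enumerate}
If $G$ is bipartite, $I(G)^{(k)}=I(G)^{k}$ by the result of \cite{svv1994}, so there is nothing to prove. We may therefore assume $G$ is non-bipartite and connected.

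Case $\depth S/I(G)^{4}=0$. By monotonicity, $\depth S/I(G)^{k}=0$ for all $k\geq4$, and then fact 3 gives the inequality for $k\geq4$. For $k=2,3$ we use Corollary~\ref{small power}, and $k=1$ is trivial.

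Case $\depth S/I(G)^{2}\leq1$. By monotonicity, $\depth S/I(G)^{k}\leq1\leq\depth S/I(G)^{(k)}$ for all $k\geq2$, so every $k$ is covered at once; the assumption of non-bipartiteness is not even needed here.

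The main obstacle is making sure the monotonicity statement is available for all $k\geq1$ (or at least from $k=2$, respectively $k=4$, onward) for arbitrary graphs in the literature. If only the weaker result $\depth S/I^{k}\geq\depth S/I^{k+1}$ for large $k$ were available, I would instead use Lemma~\ref{surj}-type arguments. Alternatively, I would use the fact that once $\mathfrak{m}\in\Ass(S/I(G)^{k})$ for connected non-bipartite $G$, it remains associated for all higher powers, because edge ideals have the persistence property, $I^{k+1}:I=I^{k}$ (Martinez-Bernal--Morey--Villarreal). That alone settles the $\depth S/I(G)^{4}=0$ case. For the $\depth S/I(G)^{2}\leq1$ case I would rely on the known non-increasing depth result for edge ideals.
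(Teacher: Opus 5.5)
\textbf{There is a genuine gap, in the case $\depth S/I(G)^{2}=1$.}

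Your treatment of the depth-zero hypotheses is fine once you use the fallback you mention. Since $I^{k+1}:I=I^{k}$ for edge ideals \cite[Theorem 2.15]{mmv2012}, the condition $\mathfrak m\in\Ass(S/I(G)^{k})$ persists. So $\depth S/I(G)^{4}=0$ (resp.\ $\depth S/I(G)^{2}=0$) gives $\depth S/I(G)^{k}=0$ for all larger $k$. Together with $\depth S/I(G)^{(k)}\geq1$ and Corollary~\ref{small power} for $k=2,3$, this is exactly the paper's argument.

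The case $\depth S/I(G)^{2}=1$, however, rests entirely on your ``fact 2'', and that is not an available theorem. Whether $k\mapsto\depth S/I(G)^{k}$ is non-increasing for every graph is an open problem. It is settled only for depth zero (by persistence), for $k\gg0$ (by stabilization), and for special classes of graphs. Persistence of associated primes cannot substitute for it, because $\depth\leq1$ is witnessed by $H^{1}_{\mathfrak m}\neq0$, not by an associated prime of dimension $\leq1$. Already $S/I(C_{4})$ has depth $1$ while both of its associated primes have dimension $2$. Lemma~\ref{surj} does not help either: it compares two ideals at the same power, so it says nothing about passing from $I(G)^{2}$ to $I(G)^{k}$. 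What must actually be shown is that $\depth S/I(G)^{2}=1$ forces $\depth S/I(G)^{k}\leq1$ for every $k\geq4$.

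The paper proves this from the structure behind $\depth S/I(G)^{2}=1$. By \cite[Theorem 4.8]{tt2014}, either $\diam(G^{c})\geq3$, or there is a triangle $T$ such that $W=V(G)\setminus N[T]$ satisfies $|W|\leq1$ or $G^{c}|_{W}$ is disconnected.
\begin{enumerate}
\item If $\diam(G^{c})\geq3$, then $\depth S/I(G)^{3}=0$ by \cite[Proposition 4.5]{hlt2023}, and persistence finishes.
\item If $|W|\leq1$, then in fact $W=\{u\}$, since $W=\emptyset$ would force depth $0$. The graph $H=G-N[u]$ satisfies $N_{H}[T]=V(H)$, hence $\depth K[V(H)]/I(H)^{2}=0$. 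Results of \cite{hlt2023} turn this into $H^{1}_{\mathfrak m}(S/I(G)^{k})\neq0$ for all $k\geq2$.
\item If $G^{c}|_{W}$ is disconnected, take ${\bf a}$ to be the indicator vector of $T$. The degree complex $\Delta_{\bf a}(I(G)^{2})=\Delta(G|_{W})$ is then disconnected. Vertices $u_{1},u_{2}$ in different components are adjacent in $G$. Applying \cite[Proposition 3.2]{hlt2023} twice produces a disconnected $\Delta_{\bf c}(I(G)^{4})$, so $\depth S/I(G)^{4}\leq1$. Together with \cite[Theorem 4.4]{hlt2023}, this suffices.
\end{enumerate}
An argument of this kind, controlling $H^{1}_{\mathfrak m}$ of the higher powers, is the ingredient your proposal is missing.
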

\begin{proof}
Set $I=I(G)$. We may assume that $G$ is non-bipartite from Lemma \ref{char of symb=ord}. Notice that $\depth S/I^{(k)}\geq1$ for all $k\geq1$. Suppose that $\depth S/I^{4}=0$. Then, from \cite[Theorem 2.15]{mmv2012}, we see that $\depth S/I^{k}=0$ for all $k\geq4$, and hence, from Corollary \ref{small power}, we obtain the desired inequality for all $k\geq1$. Therefore, we assume that $\depth S/I^{2}\leq1$. If $\depth S/I^{2}=0$, then, again from \cite[Theorem 2.15]{mmv2012}, the assertion follows. Hence, we may assume that $\depth S/I^{2}=1$. In this case, from \cite[Theorem 4.8]{tt2014}, we have either $\diam(G^{c})\geq3$ or there exists a triangle $T$ such that $|W|\leq1$ or $G^{c}|_{W}$ is disconnected, where we set $W=V(G)\setminus N[T]$. 
We distinguish the following cases: 

{\bf Case 1.} Suppose that $\diam(G^{c})\geq3$. In this case, from \cite[Proposition 4.5]{hlt2023}, it is known that $\depth S/I^{3}=0$. Hence, from Corollary \ref{small power} and \cite[Theorem 2.15]{mmv2012}, the assertion follows. 

{\bf Case 2.} Suppose that there exists a triangle $T$ such that $|W|\leq1$. Since $\depth S/I^{2}\neq0$ and \cite[Theorem 2.8]{tt2014}, we see that $|W|=1$, and hence, we set $W=\{u\}$. Set $H=G-N[u]$. Notice that $T\subset V(H)$. Also, for $v\in V(H)\setminus T$, since $v\neq u$ and $W=\{u\}$, we get $v\in N[T]$. Thus, we get $N_{H}[T]=V(H)$. Therefore, we get $\depth K[V(H)]/I(H)^{2}=0$ from \cite[Theorem 2.8]{tt2014}. From \cite[Proposition 1.6, Lemma 1.7, Proposition 3.1]{hlt2023}, we see that $H_{\mathfrak{m}}^{1}(S/I^{k})\neq0$ for all $k\geq2$, which means that $\depth S/I^{k}\leq1$ for all $k\geq2$. 

{\bf Case 3.} Suppose that there exists a triangle $T$ such that $G^{c}|_{W}$ is disconnected. From \cite[Theorem 4.4]{hlt2023}, Corollary \ref{small power}, and \cite[Theorem 2.15]{mmv2012}, it is enough to prove that $\depth S/I^{4}\leq1$. We may write $T=\{x_{1},x_{2},x_{3}\}$ and let ${\bf a}={\bf e}_{1}+{\bf e}_{2}+{\bf e}_{3}$. Now, from \cite[Lemma 4.7]{tt2014}, we know $\Delta_{\bf a}(I^{2})=\Delta(G|_{W})$. Since the 1-skeleton of $\Delta(G|_{W})$ is $G^{c}|_{W}$ and the assumption that $G^{c}|_{W}$ is disconnected, we obtain that $\Delta_{\bf a}(I^{2})$ is disconnected. Suppose that $C_{1}$ and $C_{2}$ are disconnected components of $\Delta_{\bf a}(I^{2})$ and take vertices $u_{1}\in C_{1}$ and $u_{2}\in C_{2}$. Since $\{u_{1},u_{2}\}\notin E(G^{c})$, we have $\{u_{1},u_{2}\}\in E(G)$, and hence, from  the proof of \cite[Proposition 3.2]{hlt2023}, there exists a vector ${\bf b}\in\mathbb{N}^{n}$ such that $\Delta_{\bf b}(I^{3})\subset\Delta_{\bf a}(I^{2})$ and $u_{1},u_{2}\in\Delta_{\bf b}(I^{3})$. Hence, $u_{1}$ and $u_{2}$ still belongs to disconnected components of $\Delta_{\bf b}(I^{3})$. Since $\{u_{1},u_{2}\}\in E(G)$, from \cite[Proposition 3.2]{hlt2023}, there exists a vector ${\bf c}\in\mathbb{N}^{n}$ such that $\Delta_{\bf c}(I^{4})$ is disconnected, that is, $H_{\mathfrak{m}}^{1}(S/I^{4})\neq0$, which means $\depth S/I^{4}\leq1$, as required. 
\end{proof}

We now turn to the module $I(G)^{(k)}/I(G)^{k}$ , which will be used in our depth comparison. First, we determine when the module $I(G)^{(k)}/I(G)^{k}$ remains after localization at a monomial prime ideal. For a graph $G$ and $k\geq1$, we set 
$$\mathcal{C}_{k}(G)=\{C\,\,: C\mbox{ is an induced odd cycle of }G\mbox{ with }|C|\leq 2k-1\}.$$

\begin{lemma}\label{criteria of neq0}
Let $G$ be a graph and $A$ be an independent set of $G$, Then, for each $k\geq1$, $(I(G)^{(k)}/I(G)^{k})_{P_{A}}\neq0$ if and only if $G-N[A]$ has an induced odd cycle of length at most $2k-1$, where $P_{A}=(x_{i}\,\,: i\notin A)$. 
\end{lemma}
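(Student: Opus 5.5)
Since localization is exact, $(I(G)^{(k)}/I(G)^{k})_{P_{A}}\neq0$ holds if and only if $I(G)^{(k)}S_{P_A}\neq I(G)^{k}S_{P_A}$. The plan is to compute both localizations explicitly and reduce to Lemma \ref{char of symb=ord} for the graph $H=G-N[A]$.

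\textbf{Localizing $I(G)$.} Localizing at $P_A$ inverts exactly the variables $x_a$ with $a\in A$. Let $B=N(A)$; since $A$ is independent, $B\cap A=\emptyset$. For $b\in B$ we have $x_ax_b\in I(G)$ for some $a\in A$, so $x_b$ becomes a generator after inverting $x_a$. Every other edge of $G$ meets neither $A$ nor $B$, or contains a vertex of $B$. Hence
$$I(G)S_{P_A}=\bigl((x_b\,:\,b\in B)+I(H)\bigr)S_{P_A},$$
where the variables of $H$ and of $B$ are disjoint.

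\textbf{Symbolic and ordinary powers.} Set $L=(x_b\,:\,b\in B)$. Localization commutes with products, so $I(G)^kS_{P_A}=(L+I(H))^kS_{P_A}$. Localization of a finite intersection is the intersection of localizations, and primes that meet $A$ become the unit ideal. The minimal primes of $I(G)$ not meeting $A$ correspond to minimal primes of $L+I(H)$, namely $L+P'$ with $P'\in\Min(I(H))$. Therefore
$$I(G)^{(k)}S_{P_A}=(L+I(H))^{(k)}S_{P_A}.$$
Now apply the binomial expansion for sums of ideals in disjoint sets of variables. Since $L$ is generated by variables, $L^{(j)}=L^j$, and the standard formula (as in Hà--Nguyen--Trung--Trung) gives
$$(L+I(H))^{(k)}=\sum_{j}L^{j}I(H)^{(k-j)},\qquad (L+I(H))^{k}=\sum_j L^jI(H)^{k-j}.$$
It follows that these two ideals coincide if and only if $I(H)^{(s)}=I(H)^{s}$ for all $s\le k$. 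The direction ``$\Leftarrow$'' is clear. For ``$\Rightarrow$'', set the variables of $B$ to zero, or equivalently take the $L$-degree-$0$ component using the $\mathbb{N}$-grading by $B$-degree; this recovers $I(H)^{(k)}$ versus $I(H)^k$, and lower values of $s$ are visible in higher $B$-degrees. Inverting variables of $A$ (which no longer appear) does not affect equality, because $S_{P_A}$ is flat over the polynomial ring in the remaining variables.

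\textbf{Conclusion and main obstacle.} By Lemma \ref{char of symb=ord}, $I(H)^{(s)}=I(H)^s$ for all $s\le k$ if and only if $\ogirth(H)>2k-1$. So the module is nonzero exactly when $H$ has an induced odd cycle of length at most $2k-1$. Induced cycles of $H$ are induced cycles of $G$ disjoint from $N[A]$, which is the form of the statement. The case $k=1$ is trivial on both sides, since no cycle has length at most $1$. The main obstacle is the binomial-expansion step: the clean way is to check membership monomial by monomial. Write a monomial as ${\bf x}^{\bf c}{\bf x}^{\bf d}$, with ${\bf c}$ supported on $B$ and ${\bf d}$ on $V(H)$. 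It lies in $(L+I(H))^{(k)}$ iff ${\bf x}^{\bf d}\in I(H)^{(k-|{\bf c}|)}$, since $\min_{P'}$ over the covers adds $|{\bf c}|$. The ordinary-power analogue holds directly.
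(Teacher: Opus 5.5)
Your route is the paper's: localize to replace $I(G)$ by $L+I(H)$ with $L=(x_b : b\in N(A))$ and $H=G-N[A]$, expand with the H\`a--Nguyen--Trung--Trung formula (your monomial-by-monomial check in fact proves that expansion directly here), and apply Lemma \ref{char of symb=ord} to $H$. The direction ``module nonzero $\Rightarrow$ $H$ has an induced odd cycle of length at most $2k-1$'' is fine as written, since equal ideals stay equal after localization.

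There is a gap in the other direction, at the sentence ``Inverting variables of $A$ \ldots does not affect equality, because $S_{P_A}$ is flat over the polynomial ring in the remaining variables.'' Flatness only shows that equal ideals remain equal. To pass from $(L+I(H))^{(k)}\neq (L+I(H))^{k}$ to $(L+I(H))^{(k)}S_{P_A}\neq (L+I(H))^{k}S_{P_A}$, you need the difference module to survive localization, which is a faithful flatness statement. But $S_{P_A}$ is \emph{not} faithfully flat over $R=K[x_i : i\notin A]$. Localizing at $P_A$ inverts every polynomial outside $P_A$, not just the $x_a$ with $a\in A$. For example, $x_i-1$ with $i\notin A$ becomes a unit, so $R/(x_i-1)\neq 0$ is killed. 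Your phrase ``localizing at $P_A$ inverts exactly the variables $x_a$'' conflates $S_{P_A}$ with $S[x_a^{-1} : a\in A]$, which would be faithfully flat over $R$.

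The claim is still true, and the missing ingredient is the monomial structure. There are two ways to supply it.
\begin{enumerate}
\item Grading argument. $M=(L+I(H))^{(k)}/(L+I(H))^{k}$ is a $\mathbb{Z}^n$-graded $R$-module. A nonzero homogeneous element has a proper monomial annihilator, so $M\neq 0$ forces $M_{\mathfrak{n}_0}\neq 0$ for $\mathfrak{n}_0=P_A\cap R$. Moreover, $R_{\mathfrak{n}_0}\to S_{P_A}$ is a flat local homomorphism, hence faithfully flat.
\item The paper's monomial argument. Take a monomial $u\in I(H)^{(k)}\setminus I(H)^{k}$; then $u\notin (L+I(H))^{k}$. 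Suppose $fu\in (L+I(H))^{k}$ with $f\notin P_A$. Then $f$ has a term $v$ that is a monomial in the variables $x_a$, $a\in A$. Since $(L+I(H))^{k}$ is a monomial ideal, $vu\in (L+I(H))^{k}$. No generator involves the $A$-variables, so $u\in (L+I(H))^{k}$, a contradiction.
\end{enumerate}
With either fix inserted, your proof is complete.
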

\begin{proof}
Set $I=I(G)$ and fix $k\geq1$. Let $H=G-N[A]$, $Q=(x_{i}\,\,: i\in N(A))$. Then we know $IS_{P_{A}}=(Q+I(H))S_{P_{A}}$, $I^{k}S_{P_{A}}=(IS_{P_{A}})^{k}$, and $I^{(k)}S_{P_{A}}=(IS_{P_{A}})^{(k)}$. Hence, $(I^{(k)}/I^{k})_{P_{A}}\neq0$ if and only if $(Q+I(H))^{(k)}S_{P_{A}}\neq(Q+I(H))^{k}S_{P_{A}}$. Now, we have $(Q+I(H))^{k}=\sum_{0\leq j\leq k}Q^{k-j}I(H)^{j}$. Moreover, from \cite[Theorem 3.4]{hntt2020}, we know that $(Q+I(H))^{(k)}=\sum_{0\leq j\leq k}Q^{k-j}I(H)^{(j)}$. Now, we assume that $H$ has no induced odd cycle of length at most $2k-1$. Then from Lemma \ref{char of symb=ord}, we have $I(H)^{(j)}=I(H)^{j}$ for all $0\leq j\leq k$, and hence, we obtain that 
$$(Q+I(H))^{(k)}=\sum_{0\leq j\leq k}Q^{k-j}I(H)^{(j)}=\sum_{0\leq j\leq k}Q^{k-j}I(H)^{j}=(Q+I(H))^{k},$$which implies that $(I^{(k)}/I^{k})_{P_{A}}=0$. Conversely, we assume that $H$ has an induced odd cycle of length at most $2k-1$. Again from Lemma \ref{char of symb=ord}, we have $I(H)^{(k)}\neq I(H)^{k}$. Hence, there exists a monomial $u$ such that $u\in I(H)^{(k)}\setminus I(H)^{k}$. Now, since $u\in I(H)^{(k)}$ and $u\notin Q^{k-j}I(H)^{j}$ for each $j<k$, we see that $u\notin\sum_{0\leq j\leq k}Q^{k-j}I(H)^{j}=(Q+I(H))^{k}$. Suppose that $u/1\in(Q+I(H))^{k}S_{P_{A}}$. Then there exists $f\notin P_{A}$ such that $fu\in(Q+I(H))^{k}S$. Since $f\notin P_{A}$, there exists a monomial $v\in K[x_{i}\,\,: i\in A]$ such that $uv\in(Q+I(H))^{k}S$, which implies that $u\in(Q+I(H))^{k}$, a contradiction. Hence, $u/1\in(Q+I(H))^{(k)}S_{P_{A}}\setminus(Q+I(H))^{k}S_{P_{A}}$, and hence, we get $(I^{(k)}/I^{k})_{P_{A}}\neq0$, as required. 
\end{proof}

Using Lemma \ref{criteria of neq0}, we obtain an explicit description of the support of $I(G)^{(k)}/I(G)^{k}$ as follows. 

\begin{thm}\label{locus}
For any graph $G$ and $k\geq1$, we have 
$$\sqrt{I(G)^{k}:I(G)^{(k)}}=\sqrt{\Ann_{S}I(G)^{(k)}/I(G)^{k}}=\bigcap_{C\in\mathcal{C}_{k}(G)}(I(G)+(x_{i}\,\,: i\in N_{G}[C]))$$
\end{thm}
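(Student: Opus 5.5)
The first equality holds for any pair of ideals $J\subset L$. Indeed, $\Ann_{S}(L/J)=J:L$, so $\sqrt{I(G)^{k}:I(G)^{(k)}}=\sqrt{\Ann_{S}I(G)^{(k)}/I(G)^{k}}$ is immediate. The real content is the second equality. I will prove it by computing the support of $M=I(G)^{(k)}/I(G)^{k}$. Since $M$ is $\mathbb{Z}^{n}$-graded, $\sqrt{\Ann_{S}M}$ is a radical monomial ideal, and it equals the intersection of the minimal primes in $\supp M$. Every such prime is a monomial prime, so it suffices to determine which monomial primes $P$ satisfy $M_{P}\neq0$. Moreover, the right-hand side $\bigcap_{C}(I(G)+(x_{i}: i\in N_{G}[C]))$ is also a radical monomial ideal, being an intersection of squarefree monomial ideals. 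So both sides are determined by their sets of monomial primes containing them, and I will compare these.

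Let $P=(x_{i}: i\notin B)$ be a monomial prime, where $B$ is the set of inverted variables. Since $M_{P}\neq0$ forces $I(G)\subset P$, we may assume $B$ is independent, so $P\supseteq P_{A}$ for some independent set $A$. Localization at $P$ only inverts the variables in $B$, so $M_{P}\neq0$ iff $M_{P_{B}}\neq0$, where $P_{B}=(x_{i}: i\notin B)$. By Lemma \ref{criteria of neq0}, this holds iff $G-N[B]$ contains an induced odd cycle of length at most $2k-1$. Such a cycle is induced in $G$, hence lies in $\mathcal{C}_{k}(G)$. Conversely, any $C\in\mathcal{C}_{k}(G)$ with $C\cap N[B]=\emptyset$ is an induced cycle of $G-N[B]$. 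So $\supp M\cap\{\text{monomial primes}\}$ is the set of $P_{B}$, with $B$ independent, for which some $C\in\mathcal{C}_{k}(G)$ satisfies $C\cap N[B]=\emptyset$, equivalently $B\cap N[C]=\emptyset$.

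Now translate the right-hand side. A monomial prime $P_{B}$, with $B=\{i: x_{i}\notin P\}$, contains $I(G)+(x_{i}: i\in N_{G}[C])$ iff $B$ is independent and $B\cap N_{G}[C]=\emptyset$. Hence the monomial primes containing the right-hand side are exactly those containing at least one of the ideals $I(G)+(x_{i}: i\in N_{G}[C])$, because a prime contains a finite intersection iff it contains one of the terms. These are exactly the primes in $\supp M$ found above. Since $\sqrt{\Ann M}$ is a radical monomial ideal whose monomial primes containing it are exactly $\supp M\cap\{\text{monomial primes}\}$, the two radical monomial ideals have the same monomial primes over them. Therefore they are equal, as each is the intersection of its minimal monomial primes.

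The case $\mathcal{C}_{k}(G)=\emptyset$ matches both conventions: the empty intersection is $S$, and $M=0$ by Lemma \ref{char of symb=ord}. The main point needing care is the reduction that $M_{P}\neq0$ depends only on the set of inverted variables, so that Lemma \ref{criteria of neq0}, stated for $P_{A}$ with $A$ independent, covers all monomial primes. I would verify this using that $M$ is multigraded, so $M_{P}=M[x_{i}^{-1}: i\in B]_{(P)}$, and that it vanishes iff $M[x_{i}^{-1}: i\in B]=0$.
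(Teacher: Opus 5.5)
Your proof is correct and follows the paper's argument. Both reduce to monomial primes $P_A$, discard non-independent $A$ because then $I(G)\not\subset P_A$, apply Lemma \ref{criteria of neq0} to independent $A$, and translate $V(C)\cap N[A]=\emptyset$ into $(x_i : i\in N_G[C])\subset P_A$. You additionally spell out facts the paper leaves implicit: that $\Ann_S(L/J)=J:L$, and that both sides are squarefree monomial ideals determined by the monomial primes containing them.

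The ``point needing care'' at the end is automatic. Every monomial prime is already of the form $P_B$, and Lemma \ref{criteria of neq0} is stated for localization at $P_A$ itself. Your multigraded detour there is harmless but unnecessary.
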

\begin{proof}
Fix $k\geq1$, and set $I=I(G)$, $J_{C}=(x_{i}\,\,: i\in N_{G}[C])$ for each $C\in\mathcal{C}_{k}(G)$. For $A\subset V(G)$, we set $P_{A}=(x_{i}\,,: i\notin A)$. Fix $A\subset V(G)$. We claim that $(I^{(k)}/I^{k})_{P_{A}}\neq0$ if and only if $\bigcap_{C\in\mathcal{C}_{k}}(I+J_{C})\subset P_{A}$. First, suppose that $A$ is not independent. Then there exists $i,j\in A$ such that $\{i,j\}\in E(G)$. Notice that $x_{i}x_{j}\in I(G)$ and $x_{i}x_{j}\notin P_{A}$, and hence $IS_{P_{A}}=S_{P_{A}}$. Thus, we obtain that $I^{k}S_{P_{A}}=S_{P_{A}}=I^{(k)}S_{P_{A}}$, which implies that $(I^{(k)}/I^{k})_{P_{A}}=0$. On the other hand, since $A$ is not independent, we have $I\not\subset P_{A}$, and hence, we get $\bigcap_{C\in\mathcal{C}_{k}(G)}(I+J_{C})\not\subset P_{A}$. Next, suppose that $A$ is independent. From Lemma \ref{criteria of neq0}, we know $(I^{(k)}/I^{k})_{P_{A}}\neq0$ if and only if there exists $C\in\mathcal{C}_{k}(G)$ such that $V(C)\subset V(G)\setminus N[A]$. Moreover, we can easily verify that $V(C)\subset V(G)\setminus N[A]$ if and only if $A\cap N[C]=\emptyset$. Since $A\cap N[C]=\emptyset$ is equivalent to $J_{C}\subset P_{A}$, and hence, we see that $A\cap N[C]=\emptyset$ if and only if $J_{C}\subset P_{A}$. Thus, $(I^{(k)}/I^{k})_{P_{A}}\neq0$ if and only if there exists $C\in\mathcal{C}_{k}(G)$ such that $J_{C}\subset P_{A}$. Moreover, the latter one is equivalent to the condition $\bigcap_{C\in\mathcal{C}_{k}(G)}(I+J_{C})\subset P_{A}$. Therefore, for a monomial prime ideal $P_{A}$, we see that $\sqrt{\Ann_{S}I^{(k)}/I^{k}}\subset P_{A}$ if and only if $\bigcap_{C\in\mathcal{C}_{k}(G)}(I+J_{C})\subset P_{A}$, which implies that $$\sqrt{\Ann_{S}I^{(k)}/I^{k}}=\bigcap_{C\in\mathcal{C}_{k}}(I+J_{C}),$$which completes the proof.  
\end{proof}

Theorem \ref{locus} gives a precise combinatorial formula for $\dim I(G)^{(k)}/I(G)^{k}$ for each $k\geq2$. As a consequence, we obtain the following corollary, which recovers Corollary 7.8 and Theorems 7.10 and 7.11 as immediate consequence, and yields a complete answer to Question 7.9 and Problem 7.13 in \cite{hm2026}. 

\begin{cor}\label{char of dim}
Let $G$ be a graph and $k\geq1$. Then we have 
$$\dim I(G)^{(k)}/I(G)^{k}=\max\{\alpha(G-N_{G}[C])\,\,: C\in\mathcal{C}_{k}(G)\},$$
where we use the convention that $\max\emptyset=\dim 0=-\infty$. 
\end{cor}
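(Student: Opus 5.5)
The plan is to read off the dimension from the radical of the annihilator computed in Theorem \ref{locus}. Since $\dim M=\dim S/\sqrt{\Ann_{S}M}$ for a finitely generated module $M$ (with $\dim 0=-\infty$), Theorem \ref{locus} gives
$$\dim I(G)^{(k)}/I(G)^{k}=\dim S\Big/\bigcap_{C\in\mathcal{C}_{k}(G)}\bigl(I(G)+(x_{i}\,\,: i\in N_{G}[C])\bigr).$$
If $\mathcal{C}_{k}(G)=\emptyset$, the intersection is empty. With the usual convention that the empty intersection of ideals is $S$, the quotient is $0$ and both sides equal $-\infty$; equivalently, Lemma \ref{char of symb=ord} gives $I(G)^{(k)}=I(G)^{k}$ directly. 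So I assume $\mathcal{C}_{k}(G)\neq\emptyset$.

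The dimension of $S$ modulo a finite intersection of ideals is the maximum of the dimensions of the individual quotients, because $V(\bigcap J_{i})=\bigcup V(J_{i})$. It therefore suffices to show, for each $C\in\mathcal{C}_{k}(G)$, that
$$\dim S/\bigl(I(G)+(x_{i}\,\,: i\in N_{G}[C])\bigr)=\alpha(G-N_{G}[C]).$$
To see this, set $H=G-N_{G}[C]$. Then $I(G)+(x_{i}\,\,: i\in N_{G}[C])=I(H)S+(x_{i}\,\,: i\in N_{G}[C])$, since every edge meeting $N_{G}[C]$ has a variable in that prime.

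Hence the quotient is isomorphic to $K[x_{i}\,\,: i\in V(H)]/I(H)$. Its dimension is the maximal size of a face of the independence complex $\Delta(H)$, namely $\alpha(H)$. This includes the degenerate case $V(H)=\emptyset$, where the quotient is $K$ of dimension $0=\alpha(H)$.

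Taking the maximum over $C\in\mathcal{C}_{k}(G)$ gives the formula. The only mildly delicate point is the bookkeeping of conventions (the empty family and the empty graph $H$); the substantive work was already done in Theorem \ref{locus}.
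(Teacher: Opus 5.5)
Your proposal is correct and is essentially the paper's argument. You apply Theorem~\ref{locus}, pass to $\dim S/\sqrt{\Ann_{S}(I(G)^{(k)}/I(G)^{k})}$, turn the finite intersection into a maximum of dimensions, and identify each $S/(I(G)+(x_{i} : i\in N_{G}[C]))$ with $K[x_{i} : i\in V(H)]/I(H)$ for $H=G-N_{G}[C]$; the only difference is that you spell out this last identification and the empty-family convention, which the paper leaves implicit.
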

\begin{proof}
Fix $k\geq1$ and set $I=I(G)$. We may assume that $\mathcal{C}_{k}(G)\neq\emptyset$. From Theorem \ref{locus}, we see that 
\begin{align*}
\dim I^{(k)}/I^{k}&=\dim S/\Ann_{S}I^{(k)}/I^{k}\\
&=\dim S/\sqrt{\Ann_{S}I^{(k)}/I^{k}} \\
&=\dim S/(\cap_{C\in\mathcal{C}_{k}(G)}(I+(x_{i}\,\,: i\in N[C]))) \\
&=\max_{C\in\mathcal{C}_{k}(G)}\{\dim S/(I+(x_{i}\,\,: i\in N[C]))\} \\
&=\max\{\alpha(G-N_{G}[C])\,\,: C\in\mathcal{C}_{k}(G)\},
\end{align*}
which completes the proof. 
\end{proof}

We finally apply Corollary \ref{char of dim} to the depth comparison. 

\begin{cor}\label{depth comparison free}
Let $G$ be a  graph and $k\geq2$. Suppose that $G$ has no induced subgraph isomorphic to  $(C_{2r+1}\sqcup2K_{1})$ for all $1\leq r\leq k-1$. Then we have $\depth S/I(G)^{(k)}\geq\depth S/I(G)^{k}$.  
\end{cor}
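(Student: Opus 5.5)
The plan is to show that the hypothesis forces $\dim I(G)^{(k)}/I(G)^{k}\leq 1$, and then to compare depths through the short exact sequence
$$0\rightarrow I(G)^{(k)}/I(G)^{k}\rightarrow S/I(G)^{k}\rightarrow S/I(G)^{(k)}\rightarrow0.$$

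\textbf{Step 1: the discrepancy module has dimension at most one.} Take any $C\in\mathcal{C}_{k}(G)$, so $C$ is an induced odd cycle of length $2r+1\leq 2k-1$, i.e. $1\leq r\leq k-1$. Suppose $G-N_{G}[C]$ contained an independent set $\{u,v\}$ of size two. Then $u,v$ are not adjacent to each other, and not adjacent to any vertex of $C$. Hence $G|_{V(C)\cup\{u,v\}}\cong C_{2r+1}\sqcup 2K_{1}$, which the hypothesis forbids. Therefore $\alpha(G-N_{G}[C])\leq1$ for every $C\in\mathcal{C}_{k}(G)$. By Corollary \ref{char of dim}, $M:=I(G)^{(k)}/I(G)^{k}$ satisfies $\dim M\leq 1$, where $M=0$ is allowed when $\mathcal{C}_{k}(G)=\emptyset$.

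\textbf{Step 2: depth comparison.} Set $d=\depth S/I(G)^{k}$; we must show $H_{\mathfrak{m}}^{i}(S/I(G)^{(k)})=0$ for $i<d$. If $M=0$ there is nothing to prove, so assume $M\neq0$. The long exact sequence of local cohomology gives, for each $i$,
$$H_{\mathfrak{m}}^{i}(S/I(G)^{k})\rightarrow H_{\mathfrak{m}}^{i}(S/I(G)^{(k)})\rightarrow H_{\mathfrak{m}}^{i+1}(M)\rightarrow H_{\mathfrak{m}}^{i+1}(S/I(G)^{k}).$$
\begin{itemize}
\item For $i\geq1$ we have $H_{\mathfrak{m}}^{i+1}(M)=0$ by Grothendieck vanishing, since $\dim M\leq1$.
\item For $i\leq d-1$ we have $H_{\mathfrak{m}}^{i}(S/I(G)^{k})=0$.
\end{itemize}
So for $1\leq i\leq d-1$ we get $H_{\mathfrak{m}}^{i}(S/I(G)^{(k)})=0$.

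For $i=0$: since $G$ has an edge, $\mathfrak{m}\notin\Ass(S/I(G)^{(k)})$, because $I(G)^{(k)}$ is an intersection of powers of non-maximal primes. Hence $H_{\mathfrak{m}}^{0}(S/I(G)^{(k)})=0$ always. This holds even when $d=0$, in which case the inequality is trivial anyway.

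Combining the cases, $\depth S/I(G)^{(k)}\geq d$.

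\textbf{Expected obstacle.} The only real content is Step 1, which is immediate from Corollary \ref{char of dim}. The one point to verify is that the vanishing $H^{i+1}_{\mathfrak{m}}(M)=0$ is needed only for $i\geq1$. This holds because the case $i=0$ is handled separately by the positivity of the depth of symbolic powers. So no finer information on $H^{1}_{\mathfrak{m}}(M)$ is required.
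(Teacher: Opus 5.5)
Your proposal is correct and follows the paper's route: the same bound $\alpha(G-N_{G}[C])\leq 1$ via Corollary \ref{char of dim}, then the short exact sequence together with $\depth S/I(G)^{(k)}\geq 1$. The only difference is that the paper invokes the depth lemma $\depth I(G)^{(k)}/I(G)^{k}\geq\min\{\depth S/I(G)^{k},\depth S/I(G)^{(k)}+1\}$ to get $\depth S/I(G)^{k}\leq 1$, whereas you unpack that same step directly through the long exact sequence in local cohomology and Grothendieck vanishing.
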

\begin{proof}
Set $I=I(G)$ and fix $k\geq2$. We may assume that $I^{(k)}\neq I^{k}$. First, we claim that $\alpha(G-N[C])\leq1$ for all $C\in\mathcal{C}_{k}(G)$. Fix $C\in\mathcal{C}_{k}(G)$. Then we may assume that $|C|=2r+1$ for some $1\leq r\leq k-1$. Suppose that $\alpha(G-N[C])\geq2$. Then there exist vertices $u,v\in V(G)\setminus N[C]$ such that $u\neq v$ and $\{u,v\}\notin E(G)$. Then the induced subgraph $G|_{V(C)\cup\{u,v\}}$ is isomorphic to $C_{2r+1}\sqcup2K_{1}$, a contradiction. Hence, from Corollary \ref{char of dim}, we obtain $\dim I^{(k)}/I^{k}\leq1$. From a short exact sequence $0\rightarrow I^{(k)}/I^{k}\rightarrow S/I^{k}\rightarrow S/I^{(k)}\rightarrow0$, we know 
$$1\geq\depth I^{(k)}/I^{k}\geq\min\{\depth S/I^{k}, \depth S/I^{(k)}+1\}.$$
Since $\depth S/I^{(k)}\geq1$, we obtain $\depth S/I^{k}\leq1$, as required, 
\end{proof}

As a direct application of Corollary \ref{depth comparison free}, we obtain the depth comparison for complete multipartite graphs. 

\begin{cor}
Let $G$ be a complete multi-partite graph. Then we have 
$$\depth S/I(G)^{(k)}\geq\depth S/I(G)^{k}\mbox{ for all }k\geq1.$$
\end{cor}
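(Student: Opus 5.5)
The plan is to deduce this directly from Corollary \ref{depth comparison free}. The main task is to show that a complete multipartite graph $G$ never contains an induced subgraph isomorphic to $C_{2r+1}\sqcup 2K_{1}$ for $r\geq1$.

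First dispose of trivial cases. For $k=1$ there is nothing to prove. If $G$ is bipartite (in particular if it has only two parts), Lemma \ref{char of symb=ord} gives $I(G)^{(k)}=I(G)^{k}$, so equality holds. So we may fix $k\geq2$.

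Write $V(G)=V_{1}\sqcup\cdots\sqcup V_{s}$, where two vertices are adjacent if and only if they lie in different parts. Thus non-adjacency is exactly the relation of lying in the same part, which is an equivalence relation. Equivalently, $G^{c}$ is a disjoint union of cliques, so $G$ contains no induced copy of $K_{1}\sqcup K_{2}$.

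Now suppose $H=C_{2r+1}\sqcup 2K_{1}$ were an induced subgraph of $G$. Let $u$ be one of the isolated vertices and let $\{a,b\}$ be any edge of the cycle. Then $\{u,a,b\}$ induces $K_{1}\sqcup K_{2}$. Concretely, $u$ is non-adjacent to both $a$ and $b$, so $u$, $a$ and $b$ all lie in the same part as $u$. This would force $a$ and $b$ to be non-adjacent, a contradiction.

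Hence the hypothesis of Corollary \ref{depth comparison free} holds for every $k\geq2$, and the inequality follows for all $k\geq1$. I do not expect a serious obstacle here. The only point needing care is the bookkeeping for $k=1$ and for the bipartite case, both of which are trivial.
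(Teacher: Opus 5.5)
Your proposal is correct and follows the paper's intended route. The paper states this corollary as a direct application of Corollary \ref{depth comparison free}, and your observation supplies the graph-theoretic check it leaves implicit: in a complete multipartite graph, non-adjacency means ``same part,'' so there is no induced $K_{1}\sqcup K_{2}$, and hence no induced $C_{2r+1}\sqcup 2K_{1}$.
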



\section*{Acknowledgement}
The author is deeply grateful to his supervisor, Naoki Terai, for his valuable comments and helpful advice. The research was partially supported by ohmoto-ikueikai and JST SPRING Japan Grant Number JPMJSP2126. 



\end{document}